\documentclass[a4paper,10pt]{article}

\usepackage{amsthm}
\usepackage{amsmath}
\usepackage{amssymb}
\usepackage{amsfonts}
\usepackage{fancybox}
\usepackage[all]{xy}

\theoremstyle{definition}
\newtheorem{df}{Definition}[section]
\newtheorem{thm}[df]{Theorem}
\newtheorem{lem}[df]{Lemma}
\newtheorem{cor}[df]{Corollary}
\newtheorem{prop}[df]{Proposition}

\newtheorem{rem}[df]{Remark}
\newtheorem{eg}[df]{Example}

\setlength{\textwidth}{16cm}
\setlength{\textheight}{23cm}
\setlength{\topmargin}{0cm}
\setlength{\oddsidemargin}{0cm}
\setlength{\evensidemargin}{0cm}

\newcommand\ph{\varphi}

\newcommand\ra{\rightarrow}

\DeclareMathOperator{\Ord}{Ord}
\newcommand\Set{{\textbf{{\rm Set}}}}
\newcommand\iso{{\simeq}}
\DeclareMathOperator{\id}{id}
\newcommand\sm{\setminus}
\newcommand\mc{\mathclose}

\newcommand\N{\mathbb{N}}

\newcommand\V{\mathcal{V}}
\DeclareMathOperator{\Clo}{Clo}
\DeclareMathOperator{\Inv}{Inv}

\DeclareMathOperator{\Con}{Con}
\DeclareMathOperator{\Sub}{Sub}
\DeclareMathOperator{\ari}{ar}
\DeclareMathOperator{\Ht}{Ht}
\DeclareMathOperator{\Cat}{Cat}

\newcommand\K{\mathcal{K}}

\newcommand\dom{{\rm dom}}
\newcommand\cod{{\rm cod}}

\title{Matrix product is many-sorted algebra}
\author{Shohei Izawa\footnote{sa9m02@math.tohoku.ac.jp}}
\date{}

\begin{document}
\maketitle
\begin{abstract}
It is known that a category of many-sorted algebras on pure sets of similarity type is ``concretely equivalent" to a category of single-sorted algebras.
In this paper, we characterize a single-sorted variety that corresponds to a many-sorted variety. Such variety is also characterized by the condition that is decomposable with respect to matrix product.
\end{abstract}

\section{Introduction} \label{s-introduction}

Recent paper \cite{MRS} proves that a category of many-sorted algebras on
pure sets (Definition \ref{pure-set} below) is categorically equivalent to
a category of single-sorted algebras.
In this paper, we consider on the case of varieties.
We characterize a single-sorted variety that corresponds to a many-sorted variety.
Moreover, we present one to one correspondence between many-sorted varieties
and single-sorted varieties ``with a diagonal pair".
We also exhibit this correspondence preserves underlying sets in the sense
that is compatible with the functor described in Proposition \ref{pureset-and-sets}.

On the other hand, there is a notion, studied in clone theory and study on category of algebras, called matrix product of algebras (e.g.\cite{Iza},\cite{Kea}).
Matrix product is a construction of a new algebra and the constructed algebra always has a diagonal pair.
We also prove that an algebra (or a variety) is decomposable with respect to the notion of matrix product if and only if the algebra (or the variety) has a diagonal pair.

\section{Preliminaries} \label{s-preliminaries}

\subsection{Many-sorted variety and clone} \label{s-many-sorted-variety-and-clone}

As single-sorted case, there is a natural correspondence between 
many-sorted varieties and algebras of terms; clones of many-sorted varieties.

In this section, we quickly introduce the concept of many-sorted variety and
many-sorted clone, then we explain the correspondence between varieties and clones.

Through this paper, we explain an algebra that has (possibly) infinitely
many-sorts corresponds to an infinitary single-sorted algebra.
Therefore, we consider (possibly) infinitary single-sorted/many-sorted
algebras in this paper.
However, almost all results of this paper seem new even if
we consider only on the case that all (classes of) algebras are finitary
and finitely many-sorted.
If the reader is interested only in finitary, finitely
many-sorted algebras, please read the arity bound $\kappa$ in the paragraph
the countable cardinal $\aleph_0$ and the number of sorts $S$ a finite cardinal.

In this paper, we use $\subset$ for the subset relation includes equality,
namely, $A\subset B$ means $x\in A\Rightarrow x\in B$ holds for all $x$.
We write $A\subsetneq B$ the condition $A\subset B$ and $A\neq B$. 

First, we describe the usual type-based definition of algebras and varieties.

\begin{df}
Let $S$ be a cardinal.
\begin{enumerate}
\item
An $S$-sorted type is a tuple $(F,\ari,\dom,\cod)$ that satisfies the following conditions:
\begin{itemize}
\item
$F$ is a set.
\item
$\ari:F\ra {\rm Card}$, where ${\rm Card}$ is the class of all cardinals.
\item
$\dom,\cod$ are functions defined on $F$ and satisfy
$\dom(f):\ari(f)\ra S$, $\cod(f)\in S$
for each $f\in F$.
\end{itemize}
\item
For an $S$-sorted type $F$, $F$-algebra is a pair $(A,\tau)$
that satisfies the following conditions:
\begin{itemize}
\item
$A=(A_s)_{s\in S}$ is an $S$-indexed family of sets.
\item
$\tau$ is a function defined on $F$ and
$\tau(f)$ is a mapping $\prod_{i\in\ari(f)}A_{\dom(f)(i)}\ra A_{\cod(f)}$
for each $f\in F$.
\end{itemize}
\item
For an infinite cardinal $\kappa$, $F$ is said $<\mc\kappa$-ary if
$\ari(f)<\kappa$ hold for all $f\in F$.
\item
The height of $F$ is defined as
\[
\Ht(F):=
\begin{cases}
\omega_0& \text{if }F\text{ is finitary},\\
\min\{\kappa\in\Ord\mid {\rm cf}(\kappa) >\ari(f)\text{ for all }f\in F\}& \text{ if }F\text{ is infinitary},
\end{cases}
\]
where $\omega_0$ is the minimum infinite ordinal,
$\Ord$ is the class of all ordinals, and ${\rm cf}(\kappa)$ is
the cofinality of $\kappa$, i.e., the minimum cardinal $\alpha$
such that there exists an $\alpha$-indexed family $(\lambda_i)_{i\in\alpha}$
of cardinals that satisfies $\lambda_i<\kappa$ (for all $i\in\alpha$) and
$\sup_{i\in\alpha} \lambda_i=\kappa$.
\item
For an $S$-sorted type $F$, an ordinal $\alpha$, a cardinal $\lambda$,
a mapping $v:\lambda\ra S$ and an element $s\in S$,
the set $T_{\alpha,(\lambda,v,s)}$ of all $(\lambda,v)$-ary $s$-valued terms of $F$
with complexity less than $\alpha$
is inductively defined as follows:
\begin{itemize}
\item
$T_{0,(\lambda,v,s)}:=\{x_i\mid v(i)=s,i\in\lambda\}$.
(The set of variable symbols.)
\item
$T_{\alpha+1,(\lambda,v,s)}:=T_{\alpha,(\lambda,v,s)}\cup
\{(f,(t_i)_{i\in \ari(F)})\mid 
 f\in F,t_i\in T_{\alpha,(\lambda,v,\dom(f)(i))},\cod(f)=s\}$.
\item
$T_{\alpha,(\lambda,v,s)}:=\bigcup_{\beta<\alpha} T_{\beta,(\lambda,v,s)}$
if $\alpha$ is a limit ordinal.
\end{itemize}
The set of all $(\lambda,v)$-ary $s$-valued terms of $F$ is $T_{\Ht(F),(\lambda,v,s)}$.
\item
The action of terms to an algebra $(A,\tau)$ is inductively defined as follows:
\begin{itemize}
\item
For $x_i\in T_{0,(\lambda,v,v(i))}$, $\tau(x_i):\prod_{j\in \lambda}A_{v(j)}\ra A_{v(i)}$ is defined as $(a_j)_{j\in\lambda}\mapsto a_i$.
\item
If $t=(f,(t_i)_{i\in \ari(F)})$,
$\tau(t):\prod_{j\in \lambda}A_{v(j)}\ra A_{\cod(f)}$ is defined as
\[
(a_j)_{j\in\lambda}\mapsto \tau(f)(\tau(t_i)(a_j)_{j\in\lambda})_{i\in\ari(F)}.
\]
\end{itemize}
\item
For $s\in S$, An $s$-sorted identity is a pair of two $s$-valued terms.
The tuple $(t_1,t_2)$ is usually denoted by
$t_1=t_2$ when it is considered as an identity.
A relation $(A,\tau) \models t_1=t_2$ between an algebra $(A,\tau)$ and 
an equation $t_1=t_2$ is defined by $\tau(t_1)=\tau(t_2)$.
\item
An equational theory is a set of identities.
An algebra $A$ and an equational theory $E$,
$A\models E$ means $A\models e$ for all $e\in E$.
A pair $(F,E)$ of type $F$ and equational theory $E$ is called a 
type with equational theory.
\item
A class $\V$ of $F$-algebras is said a variety if there exists
an equational theory $E$ such that \mbox{$\V=\{A\mid A\models E\}$}.
The variety of $F$-algebras defined by $E$ is denoted by $\V_{(F,E)}$.
\end{enumerate}
\end{df}

The above is traditional style of the definition of algebras.
On the other hand, as the single-sorted finitary case,
we can define essentially the same notion of algebras by
the following clone-based description.
(At least in the author's opinion,) clone-based definition is simpler
than type-based definition.
By this reason, we use the clone-based description through this paper.
Note that, the clone-based description essentially includes type-based description
as the case that the clone is freely generated.

\begin{df}
Let $S$ and $\kappa$ be cardinals.
An $S$-sorted $<\mc\kappa$-ary clone $M$ is a many-sorted algebra
satisfying the following conditions:
\begin{itemize}
\item
(Sort) The set of all sorts of $M$ consists of tuples $(\lambda,v,s)$,
where $\lambda<\kappa$ is a cardinal, $v:\lambda\ra S$ and $s\in S$.
\item
(Operation) $M$ has the following two types of operations:
\begin{itemize}
\item
(Projection) For $\lambda<\kappa$, $v:\lambda\ra S$ and $i\in \lambda$,
$M$ has a nullary operation $\pi_{(\lambda,v,i)}\in M_{(\lambda,v,v(i))}$.
\item
(Composition) For $\lambda_k<\kappa, v_k:\lambda_k\ra S$ ($k=1,2$) and $s\in S$,
$M$ has an operation 
\[
c_{(\lambda_1,v_1,s),(\lambda_2,v_2)}:
  M_{(\lambda_1,v_1,s)}\times \prod_{i\in\lambda_1}M_{(\lambda_2,v_2,v_1(i))}
  \ra M_{(\lambda_2,v_2,s)}
\]
\end{itemize}
\item
(Axiom) $M$ satisfying the following equations:
\begin{itemize}
\item
(Associativity) For $\lambda_k<\kappa,v_k:\lambda_k\ra S$ ($k=1,2,3$) and $s\in S$,
\begin{align*}
&c_{(\lambda_1,v_1,s),(\lambda_3,v_3)}(x,
  (c_{(\lambda_2,v_2,v_1(i)),(\lambda_3,v_3)}(y_i,(z_{j})_{j\in\lambda_2}))_{i\in\lambda_1})\\
=&
c_{(\lambda_2,v_2,s),(\lambda_3,v_3)}
(c_{(\lambda_1,v_1,s),(\lambda_2,v_2)}(x,(y_i)_{i\in\lambda_1}),
  (z_j)_{j\in\lambda_2}).
\end{align*}
\item
(Outer identity law) For $\lambda_k<\kappa,v_k:\lambda_k\ra S$ ($k=1,2$)
and $i_0\in\lambda_1$
\[
c_{(\lambda_1,v_1,v_1(i_0)),(\lambda_2,v_2)}
(\pi_{(\lambda_1,v_1,i_0)},(x_i)_{i\in\lambda_1})
=x_{i_0}.
\]
\item
(Inner identity law) For $\lambda<\kappa,v:\lambda\ra S$ and $s\in S$, 
\[
c_{(\lambda,v,s),(\lambda,v)}(x,(\pi_{(\lambda,v,i)})_{i\in\lambda})
=x.
\]
\end{itemize}
\end{itemize}
\end{df}
If there are no possibility of confusion, we omit the subscript of $M$ or $c$.

\begin{df}
Let $S$ and $\kappa$ be a cardinals, $M$ be an $S$-sorted $<\mc\kappa$-ary clone.
A tuple $(A,\tau)$ is said an $M$-algebra if the following conditions hold:
\begin{itemize}
\item
$A=(A_s)_{s\in S}$ is an $S$-indexed family of sets.
\item
$\tau=(\tau_{(\lambda,v,s)})$ is a family of mappings, where $(\lambda,v,s)$ runs
all tuples that satisfy $\lambda<\kappa$, $v:\lambda \ra S$ and $s\in S$,
and $\tau_{(\lambda,v,s)}$ is defined on $M_{(\lambda,v,s)}$ and
$\tau_{(\lambda,v,s)}(f)$ is a mapping $\prod_{i\in\lambda}A_{v(i)}\ra A_s$
for each $f\in M_{(\lambda,v,s)}$.
\item
For $f\in M_{(\lambda_1,v_1,s)}$,
$(g_i)_{i\in\lambda_1}\in \prod_{i\in\lambda_1}M_{(\lambda_2,v_2,v_1(i))}$
and $(a_j)_{j\in\lambda_2}\in\prod_{j\in\lambda_2}A_{v_2(j)}$,
the following equation holds:
\[
\tau_{(\lambda_2,v_2,s)}(c_{(\lambda_1,v_1,s),(\lambda_2,v_2)}(f,(g_i)_{i\in\lambda_1}))(a_{j})_{j\in\lambda_2}
=\tau_{(\lambda_1,v_1,s)}(f)(\tau_{(\lambda_2,v_2,v_1(i))}(g_i)(a_j)_{j\in\lambda_2})_{i\in\lambda_1}.
\]
\end{itemize}
The class of all $M$-algebras is denoted by $\V(M)$.
The category of $M$-algebras, that is, the class of objects is $\V(M)$
and the set of morphisms $A$ to $B$ is the set of all homomorphisms $A\ra B$,
is denoted by $\Cat(\V(M))$.
\end{df}

In this paper, all classes of algebras appear in the text consists of algebras of a common clone.

The case $S$ is a singleton, $S$-sorted clone is simply called clone,
or single-sorted clone. In this case, the sort $(\lambda,v,s)$ is
simply denoted by $\lambda$, $\lambda$-ary $i$-th projection is
denoted by $\pi_{(\lambda,i)}$.

Next, we quickly explain connection between type-based
definition and clone-based definition.

\begin{df}
Let $S$ be a cardinal, $(F,E)$ be an $S$-sorted type with equational theory.
Let $\kappa>S$ be an infinite cardinal.
We define the $S$-sorted $<\mc\kappa$-ary clone of $F$ modulo $E$ as follows:
\begin{itemize}
\item
The underlying set of a sort $(\lambda,v,s)$
is $T_{(\lambda,v,s)}/\mc\sim_E$ where
$T_{(\lambda,v,s)}$ is the set of all $(\lambda,v)$-ary $s$-valued terms of $F$.
The equivalence $t_1\sim_E t_2$
is defined by the condition
\[
``A\models E\Rightarrow A\models t_1=t_2\text{ for all }
F\text{-algebra }A".
\]
\item
Fundamental operations are defined as
\begin{align*}
\pi_i:=&x_i/\mc \sim_E, \\
c(t/\mc\sim_E,(u_i/\mc\sim_E)_{i\in\lambda})
:=&(t\circ (u_i)_{i\in \lambda})/\mc\sim_E.
\end{align*}
This clone is denoted by ${\cal M}_{\kappa}(F,E)$, or 
${\cal M}_\kappa(\V_{(F,E)})$ by using the variety $\V_{(F,E)}$
defined by the equational theory $E$.
\end{itemize}
\end{df}

In this paper, we consider only on varieties.
By this reason we can define the notion of definitional equivalence
by a term of the corresponding clone.
\begin{df}
Let $S$ be a cardinal, $\kappa$ be an infinite cardinal and
$\V_1,\V_2$ be $S$-sorted $<\mc\kappa$-ary varieties.
The varieties $\V_1$ and $\V_2$ are said to be definitionally equivalent to each other
if ${\cal M}_{\kappa}(\V_1)$ is isomorphic to ${\cal M}_\kappa(\V_2)$.
\end{df}

\begin{prop}
Let $S$ be a non-zero cardinal, $\kappa>S$ be an infinite cardinal.
\begin{enumerate}
\item
If $\V$ is an $S$-sorted $<\mc\kappa$-ary variety, then
the clone ${\cal M}_\kappa(\V)$ of terms of $\V$ is
an $S$-sorted $<\mc\kappa$-ary clone.
\item
For an $S$-sorted $<\mc\kappa$-ary clone $M$,
there exists an $S$-sorted $<\mc\kappa$-ary variety $\V$
such that ${\cal M}_{\kappa}(\V)$ is isomorphic to $M$.
\item
Let $\kappa'>\kappa$ be a cardinal,
$\V_1,\V_2$ be $S$-sorted $<\mc\kappa$-ary varieties.
Then ${\cal M}_{\kappa'}(\V_1)\iso{\cal M}_{\kappa'}(\V_2)$ if and only if
${\cal M}_{\kappa}(\V_1)\iso{\cal M}_{\kappa}(\V_2)$.
\end{enumerate}
\end{prop}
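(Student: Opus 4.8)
The plan is to handle the three parts in turn; each is the many-sorted, possibly infinitary analogue of the classical clone--variety correspondence, and the arity/complexity bookkeeping is controlled throughout by the cofinality clause in the definition of $\Ht(F)$. For part (1) I would first check that $\sim_E$ is a congruence for substitution: since $t\sim_E t'$ means $\tau(t)=\tau(t')$ on every $A\models E$, and since evaluation of a substituted term factors through evaluation of its parts, one gets $t\circ(u_i)_i\sim_E t'\circ(u_i')_i$ whenever $t\sim_E t'$ and $u_i\sim_E u_i'$. This makes $\pi_i:=x_i/\mc\sim_E$ and $c$ well defined on classes, with the declared sorts. The one genuinely set-theoretic point is closure: the substituted term $t\circ(u_i)_i$ must again be a legitimate term, i.e. have complexity $<\Ht(F)$. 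Writing the complexities of $t$ and of the $u_i$ as ordinals below $\Ht(F)$, the composite has complexity at most their ordinal sum; since $\Ht(F)$ is a cardinal (its definition via cofinality always yields an initial ordinal) it is closed under addition of smaller ordinals, so the composite stays below $\Ht(F)$. With well-definedness in hand, the three clone axioms --- associativity, outer identity ($\pi_{i_0}\circ(u_i)_i=u_{i_0}$), inner identity ($t\circ(\pi_i)_i=t$) --- are purely syntactic facts about substitution, each holding for the term operations on every algebra and hence descending to $\sim_E$-classes.

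For part (2), given an abstract clone $M$, I would build the type $F:=\coprod_{(\lambda,v,s)}M_{(\lambda,v,s)}$, declaring $g\in M_{(\lambda,v,s)}$ to be an operation symbol with $\ari(g)=\lambda$, $\dom(g)=v$, $\cod(g)=s$, and let $E$ encode the clone structure: for each composite $c(g,(h_i)_i)=k$ impose $g((h_i(\vec x))_i)=k(\vec x)$, and for each projection element impose $\pi_{(\lambda,v,i)}(\vec x)=x_i$. Put $\V:=\V_{(F,E)}$ and define $\Phi\colon M\to{\cal M}_\kappa(\V)$ by $\Phi(g):=(g,(x_i)_i)/\mc\sim_E$; this is a clone homomorphism by construction of $E$. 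Surjectivity follows by induction on complexity: a variable is $\sim_E$-equal to a projection generator, and a composite $(g,(t_j)_j)$ with $t_j\sim_E h_j(\vec x)$ flattens via the composition equations to $c(g,(h_j)_j)(\vec x)$, a single generator. The crux is injectivity, which I would obtain from a tautological algebra: for fixed $(\lambda,v)$ let $\mathbb M^{(\lambda,v)}$ have carriers $(\mathbb M^{(\lambda,v)})_t:=M_{(\lambda,v,t)}$ and interpret each symbol $g$ by $(h_i)_i\mapsto c(g,(h_i)_i)$. The clone axioms give $\mathbb M^{(\lambda,v)}\models E$, and evaluating $(g,(x_i)_i)$ at the generator tuple $(\pi_{(\lambda,v,i)})_i$ returns $c(g,(\pi_{(\lambda,v,i)})_i)=g$ by the inner identity law; hence distinct $g,g'$ are separated and $\Phi$ is injective, so it is an isomorphism.

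For part (3), the forward implication is immediate: a clone isomorphism is sort-preserving, and the $<\mc\kappa$-ary sorts of ${\cal M}_{\kappa'}(\V)$ are literally ${\cal M}_\kappa(\V)$ (the terms and $\sim_E$ do not depend on the arity bound), so restriction turns ${\cal M}_{\kappa'}(\V_1)\iso{\cal M}_{\kappa'}(\V_2)$ into ${\cal M}_\kappa(\V_1)\iso{\cal M}_\kappa(\V_2)$. For the converse the key observation is that the fundamental operations of a $<\mc\kappa$-ary variety all lie in $<\mc\kappa$-ary sorts, so ${\cal M}_\kappa(\V)$ generates ${\cal M}_{\kappa'}(\V)$ under $<\mc\kappa'$-ary composition. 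Rather than extend the isomorphism termwise, I would promote it to the level of algebras: by part (2) the category $\Cat(\V(M))$ depends functorially on $M$, so an isomorphism $\phi\colon{\cal M}_\kappa(\V_1)\iso{\cal M}_\kappa(\V_2)$ yields an isomorphism $\V_1\iso\V_2$ of categories preserving the underlying $S$-indexed families and merely relabeling operations along $\phi$. Two $(\lambda',v')$-ary terms with $\lambda'<\kappa'$ define equal operations on every algebra of $\V_1$ iff their $\phi$-translates do on every algebra of $\V_2$, since corresponding algebras have the same carriers and induced term functions; hence $\sim_E$ transports and ${\cal M}_{\kappa'}(\V_1)\iso{\cal M}_{\kappa'}(\V_2)$.

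I expect the decisive steps to be, in part (1), the complexity bookkeeping for composition (the only place the cofinality definition of $\Ht(F)$ is truly used), and in part (2) the injectivity of $\Phi$, for which the tautological algebra $\mathbb M^{(\lambda,v)}$ together with the inner identity law is the essential device. In part (3) the delicate point is the well-definedness of the extension on all $<\mc\kappa'$-ary classes, which I would settle through the algebra-level reformulation above instead of manipulating high-arity terms directly.
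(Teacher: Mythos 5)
The paper states this proposition without proof, so there is no argument of the author's to measure you against; it is treated as the routine many-sorted, infinitary analogue of the classical clone--variety correspondence. Your proposal is exactly the standard argument one would expect and, for parts (2) and (3), it is sound as sketched: the free type $F=\coprod_{(\lambda,v,s)}M_{(\lambda,v,s)}$ with relations encoding composition and projections, the tautological algebras $\mathbb M^{(\lambda,v)}$ (evaluated at the tuple of projections, using the inner identity law) to force injectivity, restriction to the $<\kappa$-ary sorts for the forward half of (3), and transport of the concrete category of algebras plus reconstruction of the $<\kappa'$-ary term clone from free algebras for the converse.

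The one place I would press you is the closure step in part (1), which you yourself single out as decisive. Saying that the composite $t\circ(u_i)_{i\in\lambda}$ ``has complexity at most their ordinal sum'' and that $\Ht(F)$ ``is closed under addition of smaller ordinals'' only disposes of a sum of \emph{two} ordinals below $\Ht(F)$; here there are $\lambda$-many substituted terms, $\lambda$ can be any cardinal below $\kappa$, and the definition of $\Ht(F)$ only guarantees ${\rm cf}(\Ht(F))>\ari(f)$ for $f\in F$, not ${\rm cf}(\Ht(F))>\lambda$. The correct bookkeeping is an induction on the structure of $t$: at a node $(f,(t_k)_{k\in\ari(f)})$ the complexities of the already-substituted subterms form an $\ari(f)$-indexed family of ordinals below $\Ht(F)$, whose supremum stays below $\Ht(F)$ precisely because ${\rm cf}(\Ht(F))>\ari(f)$, and adding $1$ keeps you inside $T_{\Ht(F),(\lambda,v,s)}$. (Equivalently: $\Ht(F)$ is always a \emph{regular} cardinal and every term has fewer than ${\rm cf}(\Ht(F))$ leaves, so only that many of the $u_i$ actually occur and their complexities have small supremum.) As written, your justification never actually invokes the cofinality clause, even though it is the only feature of the definition of $\Ht(F)$ that makes the closure statement true; the conclusion is correct, but this step needs to be rewritten along the lines above.
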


\begin{rem}
For simplification, we use, for example, the following variable notations through this paper:
\begin{itemize}
\item
In the case $\lambda$ is finite, the sort $(\lambda,v,s)$ of many-sorted clone is denoted by $(v(0),\dots,v(\lambda-1))\ra s$.
The case $\lambda=1$, $(1,v,s)$ is denoted by $v(0)\ra s$.
The case $v$ is constant mapping, $v(i)=t$ for all $i$, $(n,v,s)$ is denoted by $nt\ra s$.
\item
The composition operation $c$ of a clone is sometimes 
written as a composition of mappings.
For example, $c(x,(y_i)_{i\in\lambda})$ is denoted by
$x\circ(y_i)_{i\in\lambda}$ or $x(y_i)_{i\in \lambda}$.
\item
we write $(b,(a_s)_{s\in S\sm\{s_0\}})$ the tuple $(x_s)_{s\in S}$ such that
$x_{s_0}=b$ and $x_s=a_s$ for $s\in S$.
The notation $((a_s)_{s\in T},(b_s)_{s\in S\sm T})$ means the tuple $(x_s)_{s\in S}$
such that $x_s=a_s$ if $s\in T$ and $x_s=b_s$ if $s\in S\sm T$.
\item
If we write $\pi_{(X,v,x)}\in M_{(X,v,s)}$,
where $M$ is an $S$-sorted clone, $X$ is a set, $v:X\ra S$, $s\in S$ and $x\in X$,
we implicitly fix a bijection $\ph:X\ra |X|$ and
$M_{(X,v,s)}$ is identified with $M_{(|X|,v\circ\ph^{-1},s)}$ and
$\pi_{(X,v,x)}$ means
$\pi_{(|X|,v\circ \ph^{-1},\ph(x))}$.
\end{itemize}
\end{rem}

\subsection{Pure set and diagonal algebra} \label{s-pure-set-and-diagonal-algebra}

Let $\V$ be a class of many-sorted algebras. If there is a class of
single-sorted algebras naturally corresponding to $\V$, then all algebras
in $\V$ have the property that
the underlying sets of each sorts of the algebra are simultaneously empty
or simultaneously non-empty.
Here, ``naturally corresponding to $\V$" precisely means there is a natural equivalence between $\V$ and a single-sorted variety
that is compatible with the natural equivalence described in Proposition \ref{pureset-and-sets}.

Through this paper, $S$ be a fixed non-zero cardinal and $\kappa$ be a fixed infinite cardinal
that $\kappa>S$.

\begin{df}[cf.\ {\cite[Definition 2.1]{MRS}}]\label{pure-set}
\mbox{}
\begin{enumerate}
\item
A family $(A_s)_{s\in S}$ of sets said to be pure
if one of the following conditions holds:
One condition is $A_s=\emptyset$ for all $s\in S$.
The other is $A_s\neq \emptyset$ for all $s\in S$.
\item
Let $A=(A_s)_{s\in S}$ and $B=(B_s)_{s\in S}$ be pure sets.
A morphism $f:A\ra B$ of pure sets is a tuple $f=(f_s)_{s\in S}$ such that
$f_s:A_s\ra B_s$ for all $s\in S$.
\item
A category $\Set^S$ of $S$-sorted pure sets is the category that the class of objects consists of all $S$-sorted pure sets
and the set of morphisms from a pure set $A$ to a pure set $B$ is
the set of all morphisms of pure sets $A\ra B$.
\end{enumerate}
\end{df}

\begin{df}[cf.\ {\cite[Definition 3.1]{MRS}}]
The following single-sorted $<\mc\kappa$-ary clone $C_{{\cal D}_S}$
is called the clone corresponds
to the variety of diagonal algebras of degree $S$:
\begin{itemize}
\item (Generators)
$C_{{\cal D}_S}$ is generated by an element $d\in (C_{{\cal D}_S})_S$ of sort $S$.
(Namely, $d$ is an $S$-ary term.)
\item (Relations)
The set of fundamental relations consists of the following only one equation,
called diagonal identity:
\[
d(d(\pi_{(S\times S,(s,t))})_{t\in S})_{s\in S}=d(\pi_{(S\times S,(s,s))})_{s\in S}.
\]
\end{itemize}
A $C_{{\cal D}_S}$-algebra is called a diagonal algebra of degree $S$.
The variety of diagonal algebras of degree $S$ is denoted by ${\cal D}_S$.
\end{df}

\begin{prop}[{\cite[Theorem 3.4]{MRS}}]\label{pureset-and-sets}
The category $\Set^S$ is categorically equivalent to the category ${\cal D}_S$
via the following categorical equivalence $\ph$:
\begin{itemize}
\item
(Objects correspondence) For $A=(A_s)_{s\in S}\in \Set^S$, 
$\ph(A)$ is the following diagonal algebra:
\begin{itemize}
\item
The underlying set is the product set $\prod_{s\in S}A_s$.
\item
For $(x_{s,t})_{t\in S}\in \prod_{t\in S}A_t$ ($s\in S$), the value
of $d$ is defined as
\[
d((x_{s,t})_{t\in S})_{s\in S}:=(x_{s,s})_{s\in S}.
\]
\end{itemize}
\item
(Morphism correspondence)
For a morphism $f=(f_s)_{s\in S}:(A_s)_{s\in S}\ra (B_s)_{s\in S}$ of pure set,
the corresponding homomorphism $\ph(f):\ph((A_s)_{s\in S})\ra \ph((B_s)_{s\in S})$
is defined by $(x_s)_{s\in S}\mapsto (f_s(x_s))_{s\in S}$.
\end{itemize}
\end{prop}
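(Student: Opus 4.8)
The plan is to show that $\ph\colon \Set^S \to {\cal D}_S$ is a well-defined functor that is fully faithful and essentially surjective; by the standard characterisation this yields the categorical equivalence. First I would check that $\ph$ is well defined. On objects this means verifying that the algebra $\ph(A)$ with underlying set $\prod_{s\in S}A_s$ and operation $d((x_{s,t})_{t\in S})_{s\in S}=(x_{s,s})_{s\in S}$ satisfies the diagonal identity. This is a direct substitution: evaluating both sides of $d(d(\pi_{(S\times S,(s,t))})_{t\in S})_{s\in S}=d(\pi_{(S\times S,(s,s))})_{s\in S}$ on arbitrary elements $z_{(s,t)}=(z_{(s,t),u})_{u\in S}$, the left-hand side reduces to $(z_{(s,s),s})_{s\in S}$ after taking the inner diagonal in $t$ and then the outer diagonal in $s$, and the right-hand side reduces to the same tuple, so equality holds. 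On morphisms I would check that $\ph(f)\colon (x_s)_s\mapsto (f_s(x_s))_s$ commutes with $d$, which is immediate because $d$ acts coordinatewise while each $f_s$ acts within coordinate $s$. Functoriality, $\ph(\id)=\id$ and $\ph(g\circ f)=\ph(g)\circ\ph(f)$, is then routine.

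Next, fullness and faithfulness. Here the purity hypothesis enters: $\prod_{s\in S}A_s$ is empty exactly when all $A_s$ are empty, so $\ph$ respects the empty/nonempty dichotomy, and the case in which all components are empty is immediate since the relevant hom-sets reduce to at most one element on each side. In the nonempty case, faithfulness follows by evaluating $\ph(f)$ at tuples whose $s_0$-th entry ranges over all of $A_{s_0}$ (possible since every $A_s\ne\emptyset$), which recovers each $f_{s_0}$. For fullness, given a homomorphism $h\colon \ph(A)\to\ph(B)$ with components $h_s\colon\prod_u A_u\to B_s$, I would apply the homomorphism condition $h\circ d=d\circ(h,\dots,h)$ to families of inputs whose diagonal realises an arbitrary tuple; comparing $s$-th coordinates shows that $h_s((x_u)_u)$ depends only on $x_s$, which defines maps $f_s\colon A_s\to B_s$ with $\ph(f)=h$.

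The main work, and the expected obstacle, is essential surjectivity: every diagonal algebra $(D,d)$ must be shown isomorphic to $\ph(A)$ for some pure set $A$. The plan is to reconstruct the coordinates intrinsically. For each $s\in S$ I would introduce the mixing term $m_s(x,y):=d((u_r)_{r\in S})$ with $u_s=x$ and $u_r=y$ for $r\ne s$ (in $\ph(A)$ this replaces the $s$-th coordinate of $y$ by that of $x$), define the relation $\theta_s$ by $x\mathrel{\theta_s}y\iff m_s(x,y)=y$, and set $A_s:=D/\theta_s$. The candidate isomorphism is the map $D\to\prod_{s}A_s$, $x\mapsto([x]_{\theta_s})_s$, with assembly map sending a tuple of classes $([y^{(s)}]_{\theta_s})_s$ to $d((y^{(s)})_{s\in S})$. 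The crux is to prove, using only the diagonal identity, that each $\theta_s$ is a congruence, that $\bigcap_s\theta_s$ is the equality relation (injectivity of the quotient map), and that the assembly map is well defined and two-sided inverse to it; this is exactly the step where the diagonal identity forces $D$ to split as the product of its coordinate quotients, yielding an isomorphism $\ph(A)\iso D$. Purity of $(A_s)_s$ is then automatic, since each $A_s$ is a quotient of the single set $D$ and hence they are simultaneously empty or nonempty. Together with full faithfulness, this essential surjectivity gives the desired categorical equivalence.
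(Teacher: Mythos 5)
The paper gives no proof of this proposition at all --- it is imported from \cite{MRS} --- so your argument has to stand on its own. Its architecture (well-definedness of $\ph$, full faithfulness, essential surjectivity) is sound and is essentially the classical decomposition theorem for diagonal algebras going back to P{\l}onka; the parts you actually carry out are correct. In particular, the verification that $\ph(A)$ satisfies the diagonal identity, and the derivation of fullness from the fact that the $s$-th component of a homomorphism $\ph(A)\ra\ph(B)$ depends only on the $s$-th coordinate (with purity supplying the auxiliary elements needed to see this), are fine.

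The genuine gap is in essential surjectivity, which you correctly identify as the crux and then do not prove. Two points. First, the step cannot be carried out ``using only the diagonal identity'' as you propose: your relation $\theta_s$, defined by $m_s(x,y)=y$, is reflexive only because $m_s(x,x)=d(x,\dots,x)=x$, and the idempotence law $d(x,\dots,x)=x$ is \emph{not} a consequence of the diagonal identity as stated in this paper's definition of ${\cal D}_S$ (a two-element set with $d$ constant satisfies the diagonal identity, violates idempotence, and is not isomorphic to any $\ph(A)$; so without idempotence the proposition is actually false). Your proof must invoke idempotence explicitly --- for reflexivity of $\theta_s$ and for the composite $D\ra\prod_s D/\theta_s\ra D$ being the identity --- and should flag that it is part of the intended definition of a diagonal algebra in \cite{MRS}. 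Second, even granting idempotence, everything you label ``the crux'' (that each $\theta_s$ is an equivalence and a congruence, that $\bigcap_s\theta_s$ is equality, that the assembly map is a well-defined two-sided inverse compatible with $d$) is the entire mathematical content of the theorem and is left unverified. These claims do all follow from a few instantiations of the diagonal identity: for instance, expanding the entries $z$ as $d(z,\dots,z)$ yields $m_s(m_s(x,y),z)=m_s(x,z)$, from which symmetry and transitivity of $\theta_s$ are one line each; and taking $w_{s,s}=x$, $w_{s,t}=y$ for $t\neq s$ in the diagonal identity yields $d((m_s(x,y))_{s\in S})=d(x,\dots,x)=x$, so that $x\mathrel{\theta_s}y$ for all $s$ forces $x=d(y,\dots,y)=y$, which is exactly the injectivity you need. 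Until these computations (or equivalent ones) are written down, the proof is an outline rather than a proof.
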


\section{Homogenization of many-sorted variety} \label{s-homogenization-of-many-sorted-variety}

Let $M$ be an $S$-sorted clone and consider the following condition:
The variety of all $M$-algebras is concretely equivalent to a
single-sorted variety, that is, categorically equivalent
via a categorical equivalence that compatible with the categorical equivalence
described in Proposition \ref{pureset-and-sets}.

This condition contains the condition that ``the underlying family of sets of
an $M$-algebra is pure".
The quoted condition is equivalent to that $M$ satisfies the next definition.

\begin{df}\label{pure-clone}
An $S$-sorted clone $M$ is said to be pure if 
$M_{s_1\ra s_2}\neq \emptyset$ for all $s_1,s_2\in S$.
\end{df}

In this section, we prove this condition also be sufficient
that there exists a corresponding single-sorted variety.

Next, we explain the construction of the corresponding
single-sorted clone and variety from a many-sorted (not necessarily pure) clone.

\begin{df}\label{homog-clone}
Let $M$ be an $S$-sorted $<\mc\kappa$-ary clone.
A single-sorted $<\mc\kappa$-ary clone ${\rm H}(M)$ defined as follows is
said a homogenization of $M$:
\begin{itemize}
\item (Underlying set)
For a cardinal $\lambda<\kappa$, the underlying set ${\rm H}_\lambda(M)$ of a sort $\lambda$
(the sort corresponding to all $\lambda$-ary terms) consists of all tuple
$(f_s)_{s\in S}$, where $f_s\in M_{(\lambda\times S,p_2,s)}$.
Here, and through this paper, the symbol $p_2$ is used for the second projection
of a product set.
(In this case, $p_2$ is the mapping $(i,s')\mapsto s'$ from $\lambda\times S$ to $S$.)
\item (Projection) The nullary operation $\pi_{(\lambda,i)}\in {\rm H}_\lambda(M)$ corresponds to
$\lambda$-ary $i$-th projection is define as
\[
\pi_{(\lambda,i)}:=(\pi_{(\lambda\times S,p_2,(i,s))})_{s\in S}.
\]
\item (Composition)
For $f=(f_s)_{s\in S}\in {\rm H}_{\lambda_1}(M)$ and
$g_i=(g_{i,s})_{s\in S}\in {\rm H}_{\lambda_2}(M)$ ($i\in \lambda_1$)
the composition is defined as
\[
c(f,(g_i)_{i\in\lambda_1}):=(c(f_s,(g_{i,t})_{(i,t)\in\lambda_1\times S}))_{s\in S}.
\]
\end{itemize}
\end{df}

\begin{df}\label{homog-alg}
Let $M$ be an $S$-sorted $<\mc\kappa$-ary clone.
Let $A=(A_s)_{s\in S}$ be an $M$-algebra.
We define the homogenization ${\rm H}(A)$ of $A$ as the following ${\rm H}(M)$-algebra:
\begin{itemize}
\item
The underlying set is $\prod_{s\in S}A_s$.
\item
The action of $f=(f_s)_{s\in S}\in {\rm H}_\lambda(M)$ is
\[
((a_{i,t})_{t\in S})_{i\in\lambda}\mapsto (f_s(a_{i,t})_{(i,t)\in\lambda\times S})_{s\in S}.
\]
\end{itemize}
\end{df}

\begin{prop}
Let $M$ and $A$ be as Definition \ref{homog-alg}.
Then ${\rm H}(A)$ is actually ${\rm H}(M)$-algebra. Namely,
the compatibility condition 
\[
f(g_i(a_j)_{j\in\lambda_2})_{i\in\lambda_1}
=c(f,(g_i)_{i\in \lambda_1})(a_j)_{j\in\lambda_2}
\]
holds for $\lambda_1,\lambda_2<\kappa$, $f\in {\rm H}_{\lambda_1}(M)$,
$g_i\in{\rm H}_{\lambda_2}(M)$ ($i\in \lambda_1$) and
$a_j\in {\rm H}(A)$ ($j\in \lambda_2$).
\end{prop}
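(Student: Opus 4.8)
The plan is to verify the displayed compatibility equation componentwise: both sides are elements of the underlying set $\prod_{s\in S}A_s$ of ${\rm H}(A)$, so it suffices to check that their $s$-components agree for every $s\in S$, and each such componentwise identity will turn out to be a single instance of the compatibility axiom for the $M$-algebra $A$ (the third condition in the definition of $M$-algebra). Throughout I write $a_j=(a_{j,t})_{t\in S}\in{\rm H}(A)$ for $j\in\lambda_2$, $f=(f_s)_{s\in S}\in{\rm H}_{\lambda_1}(M)$ with $f_s\in M_{(\lambda_1\times S,p_2,s)}$, and $g_i=(g_{i,s})_{s\in S}\in{\rm H}_{\lambda_2}(M)$ with $g_{i,s}\in M_{(\lambda_2\times S,p_2,s)}$ for $i\in\lambda_1$.

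First I would expand the left-hand side. Applying the action of $g_i$ (Definition \ref{homog-alg}) to $(a_j)_{j\in\lambda_2}$ gives $g_i(a_j)_{j\in\lambda_2}=(g_{i,s}(a_{j,t})_{(j,t)\in\lambda_2\times S})_{s\in S}$; denote this element of ${\rm H}(A)$ by $b_i=(b_{i,s})_{s\in S}$. Applying the action of $f$ to $(b_i)_{i\in\lambda_1}$ then yields, in its $s$-component,
\[
\big(f(b_i)_{i\in\lambda_1}\big)_s=f_s(b_{i,t})_{(i,t)\in\lambda_1\times S}=f_s\big(g_{i,t}(a_{j,t'})_{(j,t')\in\lambda_2\times S}\big)_{(i,t)\in\lambda_1\times S}.
\]

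Next I would expand the right-hand side. By Definition \ref{homog-clone} the composite $c(f,(g_i)_{i\in\lambda_1})$ is the tuple $(h_s)_{s\in S}$ with $h_s=c(f_s,(g_{i,t})_{(i,t)\in\lambda_1\times S})\in M_{(\lambda_2\times S,p_2,s)}$, where the composition on the right is that of $M$ with outer sort $(\lambda_1\times S,p_2,s)$ and inner sort $(\lambda_2\times S,p_2)$; here one checks that the sorts match because $p_2(i,t)=t$ is exactly the value sort of $g_{i,t}$. Applying the action of this element of ${\rm H}_{\lambda_2}(M)$ to $(a_j)_{j\in\lambda_2}$ gives, in its $s$-component,
\[
\big(c(f,(g_i)_{i\in\lambda_1})(a_j)_{j\in\lambda_2}\big)_s=h_s(a_{j,t})_{(j,t)\in\lambda_2\times S}=c(f_s,(g_{i,t})_{(i,t)\in\lambda_1\times S})(a_{j,t'})_{(j,t')\in\lambda_2\times S}.
\]

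Finally I would identify the two $s$-components. The equality of the two displayed expressions is precisely the $M$-algebra compatibility axiom for $A$, applied with index sets $\lambda_1\times S$ and $\lambda_2\times S$, valuations $v_1=v_2=p_2$, outer operation $f_s$, inner operations $(g_{i,t})_{(i,t)\in\lambda_1\times S}$, and argument $(a_{j,t'})_{(j,t')\in\lambda_2\times S}$. As this holds for every $s\in S$, the two sides of the compatibility equation coincide. I expect the only delicate point to be the index bookkeeping: one must keep the two coordinate roles $(i,t)$ and $(j,t')$ separate and confirm at each composition and action that the source sort $(\lambda\times S,p_2)$ and the value sorts line up, so that every use of $c$ in $M$ and of the $M$-algebra axiom is well typed. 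Once the typing is settled, the verification is a direct reduction to the single axiom just cited.
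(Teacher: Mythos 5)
Your proposal is correct and follows essentially the same route as the paper: expand both sides componentwise using Definitions \ref{homog-clone} and \ref{homog-alg}, then observe that the equality of $s$-components is exactly the $M$-algebra compatibility axiom for $A$ instantiated at the sorts $(\lambda_1\times S,p_2,s)$ and $(\lambda_2\times S,p_2)$. The sort bookkeeping you flag is handled identically (and implicitly) in the paper's computation.
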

\begin{proof}
Let $f=(f_s)_{s\in S}$, $g_i=(g_{i,s})_{s\in S}$ and $a_j=(a_{j,s})_{s\in S}$.
Then
\begin{align*}
f(g_i(a_j)_{j\in\lambda_2})_{i\in\lambda_1}
&=f((g_{i,t}(a_{j,u})_{(j,u)\in\lambda_2\times S})_{t\in S})_{i\in\lambda_1}\\
&=(f_s(g_{i,t}(a_{j,u})_{(j,u)\in\lambda_2\times S})
  _{(i,t)\in\lambda_1\times S})_{s\in S}\\
&=(c(f_s,(g_{i,t})_{(i,t)\in\lambda_1\times S})
  (a_{j,u})_{(j,u)\in\lambda_2\times S})_{s\in S}\\
&=c(f,(g_i)_{i\in \lambda_1})(a_j)_{j\in\lambda_2}.
\end{align*}
\end{proof}

Homogenization is extended as a functor from a category of 
many-sorted algebras to the category of corresponded single-sorted
algebras.
\begin{prop}\label{many-single}
Let $M$ be an $S$-sorted $<\mc\kappa$-ary clone, $A,B$ be $M$-algebras,
and $\ph:A\ra B$ be a homomorphism of $M$-algebras.
Then ${\rm H}(\ph):(a_s)_{s\in S}\mapsto (\ph_s(a_s))_{s\in S}$ is a
homomorphism ${\rm H}(A)\ra {\rm H}(B)$ of ${\rm H}(M)$-algebras.
Furthermore, the following correspondence ${\rm H}$ is a functor from the
category $\Cat(\V(M))$ of $M$-algebras to the category $\Cat(\V({\rm H}(M)))$
of ${\rm H}(M)$-algebras:
\begin{itemize}
\item
(Object correspondence) $A\mapsto {\rm H}(A)$.
\item
(Morphism correspondence) $\ph\mapsto {\rm H}(\ph)$.
\end{itemize}
\end{prop}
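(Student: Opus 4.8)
The plan is to reduce everything to the defining property of a homomorphism of $M$-algebras, applied componentwise across the sorts $s\in S$. First I would confirm that ${\rm H}(\ph)$ is a well-defined map of underlying sets: since ${\rm H}(A)$ and ${\rm H}(B)$ carry the underlying sets $\prod_{s\in S}A_s$ and $\prod_{s\in S}B_s$, and each $\ph_s:A_s\ra B_s$, the assignment $(a_s)_{s\in S}\mapsto(\ph_s(a_s))_{s\in S}$ is plainly a map $\prod_{s\in S}A_s\ra\prod_{s\in S}B_s$.

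The substantive step is to show that ${\rm H}(\ph)$ commutes with the action of each $f=(f_s)_{s\in S}\in{\rm H}_\lambda(M)$. Fix inputs $a_i=(a_{i,t})_{t\in S}\in{\rm H}(A)$ for $i\in\lambda$. Unwinding Definition \ref{homog-alg}, the action of $f$ on ${\rm H}(A)$ sends $(a_i)_{i\in\lambda}$ to $(f_s(a_{i,t})_{(i,t)\in\lambda\times S})_{s\in S}$, and applying ${\rm H}(\ph)$ afterwards yields $(\ph_s(f_s(a_{i,t})_{(i,t)\in\lambda\times S}))_{s\in S}$. On the other side, applying ${\rm H}(\ph)$ first gives ${\rm H}(\ph)(a_i)=(\ph_t(a_{i,t}))_{t\in S}$, and then the action of $f$ on ${\rm H}(B)$ produces $(f_s(\ph_t(a_{i,t}))_{(i,t)\in\lambda\times S})_{s\in S}$. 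Thus the claim reduces, for each fixed $s\in S$, to the equality
\[
\ph_s(f_s(a_{i,t})_{(i,t)\in\lambda\times S})=f_s(\ph_t(a_{i,t}))_{(i,t)\in\lambda\times S}.
\]
Here $f_s\in M_{(\lambda\times S,p_2,s)}$, so the input at index $(i,t)$ lies in $A_{p_2(i,t)}=A_t$, and the displayed equality is exactly the homomorphism condition for $\ph$ as a morphism of $M$-algebras, applied to the single operation $f_s$ (using $\ph_{p_2(i,t)}=\ph_t$). It therefore holds by hypothesis, which proves ${\rm H}(\ph)$ is a homomorphism of ${\rm H}(M)$-algebras.

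For functoriality I would verify the two clauses directly. Preservation of identities is immediate: ${\rm H}(\id_A)$ sends $(a_s)_{s\in S}\mapsto(\id_{A_s}(a_s))_{s\in S}=(a_s)_{s\in S}$, which is $\id_{{\rm H}(A)}$. Preservation of composition holds because, for $\ph:A\ra B$ and $\psi:B\ra C$, one has $(\psi\circ\ph)_s=\psi_s\circ\ph_s$, so both ${\rm H}(\psi\circ\ph)$ and ${\rm H}(\psi)\circ{\rm H}(\ph)$ send $(a_s)_{s\in S}$ to $(\psi_s(\ph_s(a_s)))_{s\in S}$.

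I expect no genuine obstacle here. The only care needed is bookkeeping of the index set $\lambda\times S$ together with the second-projection sorting $p_2$, and this is matched precisely by the componentwise structure of $\ph=(\ph_s)_{s\in S}$. The conceptual content is simply that the homomorphism condition for ${\rm H}(M)$ unpacks, sort by sort, into $S$ separate instances of the homomorphism condition for $M$.
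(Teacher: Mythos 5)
Your proof is correct and follows essentially the same route as the paper: the key step is the sortwise identity $\ph_s(f_s(a_{i,t})_{(i,t)\in\lambda\times S})=f_s(\ph_t(a_{i,t}))_{(i,t)\in\lambda\times S}$, which is exactly the middle equality in the paper's computation, and the functoriality clauses (which the paper dismisses as easily verified) are checked by the same componentwise bookkeeping you spell out.
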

\begin{proof}
Let $f=(f_s)_{s\in S}\in {\rm H}_\lambda(M)$ and $a_i=(a_{i,s})_{s\in S}$ for $i\in\lambda$. Then
\begin{align*}
{\rm H}(\ph)(f(a_i)_{i\in\lambda})
&={\rm H}(\ph)(f_s(a_{i,t})_{(i,t)\in\lambda\times S})_{s\in S}\\
&=(\ph_s f_s(a_{i,t})_{(i,t)\in\lambda\times S})_{s\in S}\\
&=(f_s(\ph_t(a_{i,t}))_{(i,t)\in\lambda\times S})_{s\in S}\\
&=f({\rm H}(\ph)(a_i))_{i\in\lambda}.
\end{align*}
The correspondence ${\rm H}$ being a functor means
\[
{\rm H}(\id_A)=\id_{{\rm H}(A)}
\]
holds for all $A\in\V(M)$, and
\[
{\rm H}(\psi)\circ{\rm H}(\ph)={\rm H}(\psi\circ\ph)
\]
hold for all homomorphisms $\psi,\ph$ that $\cod(\ph)=\dom(\psi)$.
It is easily verified.
\end{proof}

Homogenization of a pure clone always has terms that satisfy the following identities.
In section \ref{s-heterogenization}, we prove the converse, i.e.,
single-sorted clone that has terms satisfy these identities
is isomorphic to homogenization of a many-sorted pure clone.

\begin{prop}
Let $M$ be an $S$-sorted $<\mc\kappa$-ary pure clone.
Assume $d\in {\rm H}_S(M)$ and $e_s\in {\rm H}_1(M)$ ($s\in S$)
be terms satisfying the following conditions:
\begin{itemize}
\item
$d$ is defined as $d=(\pi_{(S\times S,p_2,(s,s))})_{s\in S}$.
(Intuitively, the term operation of $d$ acts as
\[
A^S\ni ((a_{s,t})_{t\in S})_{s\in S}\mapsto (a_{s,s})_{s\in S}\in A
\]
for each ${\rm H}(M)$-algebra $A$.)
\item
Each term $e_s$ satisfies follows:
\begin{itemize}
\item
$e_s\circ(\pi_{(S,\id_S,s)},(x_u)_{u\in S\sm\{s\}})=e_s$
(Intuitively, the value of term operation $e_s(a_u)_{u\in S}$
depends only on $s$-component $a_s$.)
\item
If $e_s=(e_{s,t})_{t\in S}$, where $e_{s,t}\in M_{(S,\id_S,t)}$,
then $e_{s,s}=\pi_{(S,\id_S,s)}$.
(Intuitively, the value $e_{s,s}(a_u)_{u\in S}$ of $s$-component
is $a_s$.)
\end{itemize}
\end{itemize}
Then the following equations hold in ${\rm H}(M)$.
\begin{enumerate}
\item
$e_s\circ d=e_s\circ \pi_{(S,s)}$.
\item
$d\circ(e_s\circ \pi_{(S,s)})_{s\in S}=d$.
\item
$d\circ (\pi_{(S,0)})_{s\in S}=\pi_{(S,0)}$, where $0\in S$.
\end{enumerate}
\end{prop}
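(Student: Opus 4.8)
The plan is to reduce each of the three equations to a component-wise computation in the many-sorted clone $M$, using the formulas for composition and projection in ${\rm H}(M)$ from Definition \ref{homog-clone} together with the clone axioms, above all the outer identity law. Write $e_s=(e_{s,t})_{t\in S}$ with $e_{s,t}\in M_{(S,\id_S,t)}$ (identifying $1\times S$ with $S$ and $p_2$ with $\id_S$). Unwinding the definitions, the $u$-th components of the relevant terms are $(e_s\circ d)_u=c(e_{s,u},(\pi_{(S\times S,p_2,(t,t))})_{t\in S})$ and $(e_s\circ\pi_{(S,s)})_u=c(e_{s,u},(\pi_{(S\times S,p_2,(s,t))})_{t\in S})$, while $d_u=\pi_{(S\times S,p_2,(u,u))}$ and $(\pi_{(S,s)})_u=\pi_{(S\times S,p_2,(s,u))}$. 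The common theme is that $d_u$ is a projection, so composing with it selects a single coordinate, and that the $s$-th components of $d$ and of $\pi_{(S,s)}$ coincide, both being $\pi_{(S\times S,p_2,(s,s))}$.

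The crux is a factorization lemma needed for the first equation: each $e_{s,u}$ factors through the $s$-th projection, i.e.\ there is a unary term $\hat e_{s,u}\in M_{s\ra u}$ with $e_{s,u}=c(\hat e_{s,u},(\pi_{(S,\id_S,s)}))$. I would build $\hat e_{s,u}:=c(e_{s,u},(z_t)_{t\in S})$, where $z_s$ is the unary $s$-sorted projection and, for $t\neq s$, $z_t$ is any element of $M_{s\ra t}$; this is exactly where purity of $M$ enters, since it guarantees $M_{s\ra t}\neq\emptyset$. Substituting $\hat e_{s,u}$ back into $c(\hat e_{s,u},(\pi_{(S,\id_S,s)}))$ and applying associativity and the outer identity law rewrites it as $c(e_{s,u},(y_t)_{t\in S})$ with $y_s=\pi_{(S,\id_S,s)}$; the first hypothesis on $e_s$ (that its value depends only on the $s$-component) then collapses this back to $e_{s,u}$, proving the factorization. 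With it in hand, associativity and the outer identity law reduce both $(e_s\circ d)_u$ and $(e_s\circ\pi_{(S,s)})_u$ to the single value $c(\hat e_{s,u},(\pi_{(S\times S,p_2,(s,s))}))$, since only the $s$-th entry of each inner tuple survives the projection through $\hat e_{s,u}$, and those two entries agree. This yields the first equation.

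The second and third equations are then direct applications of the outer identity law. For the second, the $u$-th component of $d\circ(e_s\circ\pi_{(S,s)})_{s\in S}$ is $c(\pi_{(S\times S,p_2,(u,u))},\,\cdot\,)$ over a family indexed by $(s,t)\in S\times S$, so the outer identity law selects the $(u,u)$-entry, which is the $u$-th component of $e_u\circ\pi_{(S,u)}$, namely $c(e_{u,u},(\pi_{(S\times S,p_2,(u,t))})_{t\in S})$; the second hypothesis $e_{u,u}=\pi_{(S,\id_S,u)}$ and one more use of the outer identity law reduce this to $\pi_{(S\times S,p_2,(u,u))}=d_u$, so the composite equals $d$. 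For the third, the same selection picks out the $(u,u)$-entry of the constant family built from $\pi_{(S,0)}$, giving $\pi_{(S\times S,p_2,(0,u))}=(\pi_{(S,0)})_u$, so the composite equals $\pi_{(S,0)}$.

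I expect the factorization lemma to be the main obstacle. It is the only point where the first hypothesis on $e_s$, stated for substitution of $(S,\id_S)$-ary terms, must be brought to bear against the $(S\times S,p_2)$-ary tuples occurring in $(e_s\circ d)_u$, and it is the only place where purity is genuinely used. The second and third equations, by contrast, are mechanical consequences of the outer identity law and the explicit form of $d$.
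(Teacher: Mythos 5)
Your proposal is correct, and your treatment of equations 2 and 3 coincides with the paper's: in both you use the outer identity law to let the component $d_u=\pi_{(S\times S,p_2,(u,u))}$ select the $(u,u)$-entry of the substituted family, and then the hypothesis $e_{u,u}=\pi_{(S,\id_S,u)}$ (for 2) or the explicit form of $\pi_{(S,0)}$ (for 3) finishes the computation. The only real divergence is in equation 1, where you route the argument through a factorization lemma $e_{s,u}=c(\hat e_{s,u},(\pi_{(S,\id_S,s)}))$ with $\hat e_{s,u}\in M_{s\ra u}$ built using purity. That lemma is sound as you describe it, but it is a detour: the paper instead applies the hypothesis ``$e_s$ depends only on the $s$-component'' in a single step, replacing the argument family $(\pi_{(S\times S,p_2,(t,t))})_{t\in S}$ (the components of $d$) by $(\pi_{(S\times S,p_2,(s,t))})_{t\in S}$ (the components of $\pi_{(S,s)}^{{\rm H}(M)}$), which is legitimate because the two families agree in the $s$-th slot — exactly the substitution instance the hypothesis licenses. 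In particular, contrary to your closing remark, purity is not needed anywhere in the proof of these three identities; it is needed only to guarantee that terms $e_s$ with the stated properties exist in the first place, which the proposition assumes. Your factorization lemma does become genuinely useful later in the paper (it is in effect what the heterogenization construction of Section \ref{s-heterogenization} exploits), but for this proposition the direct substitution is shorter and avoids the extra hypothesis.
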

\begin{proof}
1. 
\begin{align*}
e_s\circ d
=&e_s\circ (\pi_{(S\times S,p_2,(t,t))}^M)_{t\in S}&\\
=&e_s\circ (\pi_{(S\times S,p_2,(s,t))}^M)_{t\in S}&(e_s\text{ depends only on }s\text{-componet})\\
=&e_s\circ \pi_{(S,s)}^{{\rm H}(M)}.&
\end{align*}
Here and through the proof of this proposition,
$\pi^M$ and $\pi^{{\rm H}(M)}$ denote projection constants
of $M$ and ${\rm H}(M)$ respectively.

2.
\begin{align*}
d\circ (e_s\circ \pi_{(S,s)}^{{\rm H}(M)})_{s\in S}
=& (\pi_{(S\times S,p_2,(s,s))}^M)_{s\in S}
 \circ ((e_{s,t})_{t\in S}\circ (\pi_{(S\times S,p_2,(s,u))}^M)_{u\in S})_{s\in S}\\
=& (e_{s,s}\circ (\pi_{(S\times S,p_2,(s,u))}^M)_{u\in S})_{s\in S}\\
=& (\pi_{(S\times S,p_2,(s,s))}^M)_{s\in S}\\
=& d.
\end{align*}

3.
\begin{align*}
d\circ (\pi_{(S,0)}^{{\rm H}(M)})_{s\in S}
=&(\pi_{(S\times S,p_2,(s,s))}^M)_{s\in S}
 \circ (\pi_{(S\times S,p_2,(0,t))}^M)_{(s,t)\in S\times S}\\
=& (\pi_{(S\times S,p_2,(0,s))}^M)_{s\in S}\\
=& \pi_{(S,0)}^{{\rm H}(M)}.
\end{align*}
\end{proof}

We give a name for a tuple of these terms.
\begin{df}\label{diagonal-pair}
Let $C$ be a single-sorted $<\mc\kappa$-ary clone.
The tuple $(d,(e_s)_{s\in S})\in C_S\times C_1{}^S$ is 
said an $S$-ary diagonal pair of $C$ if the following equations holds:
\begin{enumerate}
\item
$e_{s}\circ d=\pi_{(S,s)}$ for all $s\in S$.
\item
$d(e_s\circ \pi_{(S,s)})_{s\in S}=d$.
\item
$d(\pi_{(S,0)})_{s\in S}=\pi_{(S,0)}$ for $0\in S$.
\end{enumerate}
\end{df}

\begin{rem}
If $(d,(e_s)_{s\in S})$ is a diagonal pair of a clone $C$,
then $d$ satisfies diagonal identity
\[
d(d(\pi_{(S\times S,(s,t))})_{t\in S})_{s\in S}
=d(\pi_{(S\times S,(s,s))})_{s\in S}.
\]
Namely, $C$-algebras have diagonal algebras reduct.
\end{rem}
\begin{proof}
\begin{align*}
d(d(\pi_{(S\times S,(s,t))})_{t\in S})_{s\in S}
=&d(e_sd(\pi_{(S\times S,(s,t))})_{t\in S})_{s\in S}\\
=&d(e_s\pi_{(S\times S,(s,s))})_{s\in S}\\
=&d(\pi_{(S\times S,(s,s))})_{s\in S}.
\end{align*}
\end{proof}

\section{Matrix product} \label{s-matrix-product}

A homogenization ${\rm H}(M)$ of an $S$-sorted pure clone has a diagonal pair.
A clone that has a diagonal pair is also characterized 
as being decomposable with respect to matrix product.
In this section, we explain these conditions are equivalent.
In the next section, we explain these conditions also be equivalent to
the following condition: There exists a many-sorted pure clone $M$ such that
the homogenization of $M$ is isomorphic to the single-sorted clone.

We start from the definition of matrix product.
\begin{df}
Let $C$ be a single-sorted $<\mc\kappa$-clone.
\begin{enumerate}
\item
Let $e\in C_1$ be an idempotent element, namely, $e$ satisfies $e\circ e=e$.
An idempotent retract, denoted by $e(C)$, of $C$ by $e$ is the following $<\mc\kappa$-ary clone.
\begin{itemize}
\item (Underlying set)
The underlying set $e(C)_{\lambda}$ of the sort $\lambda<\kappa$ is 
$\{f\in C_\lambda\mid e\circ f=f\}/\mc\sim_\lambda$,
where $\sim_\lambda$ is the equivalence relation defined as
$f\sim_\lambda g :\Leftrightarrow
f\circ (e\pi_{(\lambda,i)})_{i\in\lambda}=g\circ (e\pi_{(\lambda,i)})_{i\in\lambda}$.
\item (Projection)
The $\lambda$-ary $i$-th projection $\pi^{e(C)}_{(\lambda,i)}$
is $e\circ \pi^C_{(\lambda,i)}\mc/\mc\sim_\lambda$, where $\pi^C_{(\lambda,i)}$ is
the $\lambda$-ary $i$-th projection of $C$.
\item (Composition)
For $f\mc/\mc\sim_{\lambda_1}\in e(C)_{\lambda_1}$ and
$g_i\mc/\mc\sim_{\lambda_2}\in C(C)_{\lambda_2}$ ($i\in\lambda_1$), we define
\[
c^{e(C)}(f\mc/\mc\sim_{\lambda_1},(g_i\mc/\mc\sim_{\lambda_2})_{i\in\lambda_1})
:=c^C(f,(g_i)_{i\in\lambda_1})\mc/\mc\sim_{\lambda_2},
\]
where $c^C$ is the composition operation of $C$.
\end{itemize}
\item
Let $(e_s)_{s\in S}\in C_1{}^S$ be a family of idempotent elements.
The matrix product $\boxtimes_{s\in S}e_s(C)$ of a family of
idempotent retracts $(e_s(C))_{s\in S}$ is defined as
the following $<\mc\kappa$-ary clone:
\begin{itemize}
\item (Underlying set)
The underlying set $(\boxtimes_{s\in S}e(C))_{\lambda}$ of the sort $\lambda<\kappa$ is 
\[
\{(f_s/\mc\sim_{\lambda})_{s\in S}\in (C_{\lambda\times S}/\mc\sim_{\lambda})^S
 \mid \forall s\in S;e_sf_s=f_s\}, 
\]
where the equivalence $\sim_\lambda$ is defined as
\[
f\sim_{\lambda} g:\Longleftrightarrow
f\circ (e_t\pi_{(\lambda\times S,(i,t))})_{(i,t)\in\lambda\times S}
=g\circ (e_t\pi_{(\lambda\times S,(i,t))})_{(i,t)\in\lambda\times S}.
\]
\item (Projection)
The $\lambda$-ary $i$-th projection $\pi_{(\lambda,i)}$ is defined by
$(e_s\circ \pi^{C}_{\lambda\times S,(i,s)}\mc/\mc\sim_\lambda)_{s\in S}$,
where $\pi_{(\lambda\times S,(i,s))}^C$ is the $\lambda\times S$-ary $(i,s)$-th
projection constant of $C$.
\item (Composition)
For $f=(f_s\mc/\mc\sim_{\lambda_1})_{s\in S}\in (\boxtimes_{s\in S}e(C))_{\lambda_1}$
and $g_i=(g_{i,s}\mc/\mc\sim_{\lambda_2})_{s\in S}\in (\boxtimes_{s\in S}e(C))_{\lambda_2}$
($i\in\lambda_1$), the composition is defined by
\[
c(f,(g_i)_{i\in\lambda_1})
:=(c^C(f_s,(g_{i,t})_{(i,t)\in\lambda_1\times S})\mc/\mc\sim_{\lambda_2})_{s\in S},
\]
where $c^C$ is the composition operation of $C$.
\end{itemize}
\end{enumerate}
\end{df}

\begin{df}[cf.\ {\cite[Definition 2.4, Lemma 3.5]{Kea}}]
Let $C$ be a $<\mc\kappa$-ary clone, $A$ be a $C$-algebra.
\begin{enumerate}
\item
Let $e\in C_1$ be an idempotent element.
The idempotent retract (in literature e.g. \cite{Iza},\cite{Kea},
this is referred as a neighbourhood) of 
$A$ by $e$, denoted by $e(A)$, is defined as the following $e(C)$-algebra:
\begin{itemize}
\item
The underlying set is $e(A)=\{e(a)\mid a\in A\}$.
\item
The action of $f\mc/\mc\sim_\lambda\in (e(C))_{\lambda}$ is
$(a_i)_{i\in\lambda}\mapsto f(a_i)_{i\in\lambda}$.
\end{itemize}
\item
Let $(e_s)_{s\in S}$ be a family of idempotent elements of $C$.
The matrix product $\boxtimes_{s\in S}e_s(A)$ is defined as
the following $\boxtimes_{s\in S}e_s(C)$-algebra:
\begin{itemize}
\item
The underlying set is $\prod_{s\in S}e_s(A)$.
\item
The action of
$(f_s\mc/\mc\sim_\lambda)_{s\in S}\in (\boxtimes_{s\in S}e_s(C))_\lambda$ is
\[
((a_{i,t})_{t\in S})_{i\in\lambda}\mapsto 
(f_s(a_{i,t})_{(i,t)\in\lambda\times S})_{s\in S}.
\]
\end{itemize}
\end{enumerate}
\end{df}


The mapping $A\mapsto \boxtimes_{s\in S}e_s(A)$ can be extended as a functor
from the category of $C$-algebras to the category of $\boxtimes_{s\in S}e_s(C)$-algebras.
Precisely, the following statement hold.
\begin{prop}
Let $C$ be a $<\mc\kappa$-ary clone and $(e_s)_{s\in S}$ be a
family of idempotent elements of $C$.
Let $A$ and $B$ be $C$-algebras, $\ph:A\ra B$ be a homomorphism of $C$-algebras.
Then the mapping \mbox{$\tilde{\ph}:\boxtimes_{s\in S}e_s(A)\ra \boxtimes_{s\in S}e_s(B)$}
defined as $(a_s)_{s\in S}\mapsto (\ph(a_s))_{s\in S}$ is a homomorphism of
$\boxtimes_{s\in S}e_s(C)$-algebras.
Furthermore, the (pair of) correspondence $A\mapsto \boxtimes_{s\in S}e_s(A)$ and
$\ph\mapsto \tilde{\ph}$ is a functor $\V(C)\ra \V(\boxtimes_{s\in S}e_s(C))$.
\end{prop}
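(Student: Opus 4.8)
The plan is to mirror the argument of Proposition \ref{many-single}, since the matrix product $\boxtimes_{s\in S}e_s(-)$ is formally the same kind of componentwise construction as the homogenization ${\rm H}(-)$. There are three things to check in order: that $\tilde{\ph}$ actually maps into the target algebra, that it commutes with the fundamental operations of $\boxtimes_{s\in S}e_s(C)$, and that the assignment $A\mapsto\boxtimes_{s\in S}e_s(A)$, $\ph\mapsto\tilde{\ph}$ respects identities and composition.

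First I would verify that $\tilde{\ph}$ is well defined, i.e.\ that it sends $\prod_{s\in S}e_s(A)$ into $\prod_{s\in S}e_s(B)$. An element $a_s$ of $e_s(A)$ satisfies $e_s(a_s)=a_s$; since $e_s\in C_1$ is a (unary) term and $\ph$ is a $C$-homomorphism, we get $\ph(a_s)=\ph(e_s(a_s))=e_s(\ph(a_s))$, so $\ph(a_s)\in e_s(B)$. Hence $\tilde{\ph}((a_s)_{s\in S})=(\ph(a_s))_{s\in S}$ indeed lands in $\prod_{s\in S}e_s(B)$.

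Next comes the homomorphism property, which is the substantive step. For $f=(f_s\mc/\mc\sim_\lambda)_{s\in S}\in(\boxtimes_{s\in S}e_s(C))_\lambda$ and $a_i=(a_{i,s})_{s\in S}\in\boxtimes_{s\in S}e_s(A)$ ($i\in\lambda$), I would unwind the definition of the action and invoke once that $\ph$ commutes with the term operation $f_s$ of $C$:
\begin{align*}
\tilde{\ph}(f(a_i)_{i\in\lambda})
&=\tilde{\ph}\,((f_s(a_{i,t})_{(i,t)\in\lambda\times S})_{s\in S})\\
&=(\ph(f_s(a_{i,t})_{(i,t)\in\lambda\times S}))_{s\in S}\\
&=(f_s(\ph(a_{i,t}))_{(i,t)\in\lambda\times S})_{s\in S}\\
&=f(\tilde{\ph}(a_i))_{i\in\lambda}.
\end{align*}
The first and last expressions are exactly the two sides of the homomorphism condition. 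The only point demanding care is that the value $f_s(a_{i,t})_{(i,t)\in\lambda\times S}$ depends only on the class $f_s\mc/\mc\sim_\lambda$ (so that evaluating $f$ via a chosen representative is legitimate on both $A$ and $B$); but this is precisely the well-definedness of the action on a matrix-product algebra, already built into its definition, so no new argument is needed here.

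Finally I would check functoriality: $\widetilde{\id_A}=\id_{\boxtimes_{s\in S}e_s(A)}$ and $\widetilde{\psi\circ\ph}=\tilde{\psi}\circ\tilde{\ph}$, both of which follow immediately by evaluating each side on a tuple $(a_s)_{s\in S}$ and reading off the componentwise definition of $\tilde{(-)}$. I expect the main obstacle to be bookkeeping rather than mathematics: because the whole construction is componentwise, the genuine content is confined to the well-definedness check of the second paragraph and the single invocation of the homomorphism hypothesis in the third, with the rest amounting to keeping the double indices $(i,t)\in\lambda\times S$ straight.
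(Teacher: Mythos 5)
Your proposal is correct and follows essentially the same route as the paper: the same componentwise computation $\tilde{\ph}(f(a_i)_{i\in\lambda})=(\ph(f_s(a_{i,t})_{(i,t)\in\lambda\times S}))_{s\in S}=(f_s(\ph(a_{i,t}))_{(i,t)\in\lambda\times S})_{s\in S}=f(\tilde{\ph}(a_i))_{i\in\lambda}$, followed by the routine check of identities and composition. Your preliminary verification that $\ph(a_s)=\ph(e_s(a_s))=e_s(\ph(a_s))\in e_s(B)$ is a welcome addition that the paper leaves implicit.
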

\begin{proof}
Let $f=(f_s\mc/\mc\sim_\lambda)_{s\in S}\in (\boxtimes_{s\in S}e_s(C))_{\lambda}$ and
$a_{i}=(a_{i,s})_{s\in S}\in \boxtimes_{s\in S}e_s(A)$ for $i\in\lambda$.
Then
\begin{align*}
\tilde{\ph}(f(a_i)_{i\in\lambda})
&=(\ph(f_s(a_{i,t})_{(i,t)\in\lambda\times S}))_{s\in S}\\
&=(f_s(\ph(a_{i,t}))_{(i,t)\in\lambda\times S})_{s\in S}\\
&=f(\tilde{\ph}(a_i))_{i\in\lambda}.
\end{align*}
The correspondence $\ph\mapsto \tilde{\ph}$ clearly preserves
identity morphisms and composition.
Thus the correspondence is a functor.
\end{proof}

Matrix product has a diagonal pair.
A diagonal pair of a matrix product is, for example,
constructed as in the next proposition.
\begin{prop}
Let $C$ be a single-sorted $<\mc\kappa$-ary clone, $(e_s)_{s\in S}$ be
a family of idempotent elements of $C$.
Let $d\in (\boxtimes_{s\in S} e_s(C))_{S}$ and 
$\tilde{e}_s\in (\boxtimes_{s\in S} e_s(C))_1$ be the terms defined as follows:
\begin{align*}
d&:=(e_s\circ \pi_{(S\times S,(s,s))}\mc/\mc\sim_S)_{s\in S},\\
\tilde{e}_s&:=(e_t\circ \pi_{(S,s)}\mc/\mc\sim_1)_{t\in S}.
\end{align*}
Then $(d,(\tilde{e}_s)_{s\in S})$ is a diagonal pair of
$\boxtimes_{s\in S}e_s(C)$.
\end{prop}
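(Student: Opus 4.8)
The plan is to verify directly the three defining equations of a diagonal pair (Definition \ref{diagonal-pair}) for the pair $(d,(\tilde{e}_s)_{s\in S})$, by unfolding the explicit formulas for the projections, the composition, and the equivalence $\sim_\lambda$ of the matrix product $\boxtimes_{s\in S}e_s(C)$, and reducing each equation to a component-wise identity in $C$ that is closed using only idempotence $e_s\circ e_s=e_s$. Since every element of $\boxtimes_{s\in S}e_s(C)$ is an $S$-tuple of $\sim_\lambda$-classes, I would throughout fix the displayed representatives and compare representatives component-wise at the end of each computation. As a preliminary step I would check that $d$ and each $\tilde{e}_s$ are legitimate elements, i.e. that their components satisfy $e_s\circ f_s=f_s$; both follow at once from idempotence. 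I would also fix once and for all the identification $1\times S\cong S$ used to read $\tilde{e}_s$ as a unary element with components in $C_{1\times S}=C_S$, and keep the two roles of an index $(i,t)\in\lambda\times S$ (argument slot $i$ versus sort slot $t$) strictly apart.

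For the first equation I would compute the $r$-th components of $\tilde{e}_s\circ d$ and of $\tilde{e}_s\circ\pi_{(S,s)}$. The structural fact is that the representative $(\tilde{e}_s)_r=e_r\circ\pi_{(1\times S,(0,s))}$ reads only the $s$-th sort slot of its single argument; hence composing with $d$, or with $\pi_{(S,s)}$, simply substitutes the $s$-th component of that argument. But $d$ and $\pi_{(S,s)}$ share that $s$-th component, namely $e_s\circ\pi_{(S\times S,(s,s))}$, so both composites collapse to the common representative $e_r\circ e_s\circ\pi_{(S\times S,(s,s))}$. Thus the first diagonal-pair identity comes out in the form $\tilde{e}_s\circ d=\tilde{e}_s\circ\pi_{(S,s)}$, exactly as for homogenizations.

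For the remaining two conditions the pattern is the same, now with $d$ on the outside: the representative $d_r=e_r\circ\pi_{(S\times S,(r,r))}$ reads only the slot $(r,r)$, so in any composition $d\circ(g_s)_{s\in S}$ the $r$-th component is obtained by substituting the single entry $(g_r)_r$ and prepending $e_r$. For condition 2 I would first record, from the previous step, that $w_s:=\tilde{e}_s\circ\pi_{(S,s)}$ has $t$-th component $e_t\circ e_s\circ\pi_{(S\times S,(s,s))}$; then $(w_r)_r=e_r\circ e_r\circ\pi_{(S\times S,(r,r))}=e_r\circ\pi_{(S\times S,(r,r))}=d_r$ by idempotence, whence $d\circ(w_s)_{s\in S}=d$. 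For condition 3 the relevant entry is $(\pi_{(S,0)})_r=e_r\circ\pi_{(S\times S,(0,r))}$, and prepending $e_r$ and using $e_r\circ e_r=e_r$ returns exactly $(\pi_{(S,0)})_r$, so $d\circ(\pi_{(S,0)})_{s\in S}=\pi_{(S,0)}$. In each case the two representatives agree on the nose, so no nontrivial appeal to $\sim_\lambda$ is needed beyond the choice of representative.

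The computations themselves are short once set up; the main obstacle is purely bookkeeping. The delicate points are (i) keeping the argument-index and the sort-index inside $\lambda\times S$ apart when applying the composition formula, so that one substitutes into the correct variable of each representative; (ii) handling the identification $1\times S\cong S$ consistently for the unary elements $\tilde{e}_s$; and (iii) confirming that the chosen representatives are well defined modulo $\sim_\lambda$, i.e. that replacing $f_s$ by a $\sim_\lambda$-equivalent term does not change the outcome, which reduces, as in the definition of $\sim_\lambda$, to composing with the tuple $(e_u\circ\pi_{(\lambda\times S,(i,u))})_{(i,u)}$ and invoking idempotence. Apart from these index manipulations, the only clone-theoretic input used anywhere is $e_s\circ e_s=e_s$.
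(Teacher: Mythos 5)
Your proposal is correct and follows essentially the same route as the paper: a direct component-wise verification of the three diagonal-pair identities, reducing each to the observation that $d_r$ and $(\tilde{e}_s)_r$ read a single slot of their arguments and then closing with $e_r\circ e_r=e_r$ (including proving condition 1 in the form $\tilde{e}_s\circ d=\tilde{e}_s\circ\pi_{(S,s)}$, which is exactly the form the paper's own proof establishes). The only difference is presentational: you track individual components and representatives explicitly where the paper manipulates whole $S$-tuples, but the underlying computation is identical.
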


\begin{proof}
$d,\tilde{e}_s\in\boxtimes_{t\in S} e_t(C)$ immediately follows from the definition.
In the proof of this proposition, we write 
$\pi^C$/$\pi^{\boxtimes e(C)}$ the projection terms of
$C$/$\boxtimes_{s\in S}e_s(C)$ respectively.

1. 
\begin{align*}
\tilde{e}_s\circ d
=& \tilde{e}_s\circ (e_t\circ \pi_{(S\times S,(t,t))}^C)_{t\in S}&\\
=& \tilde{e}_s\circ (e_t\circ \pi_{(S\times S,(s,t))}^C)_{t\in S}&
  (\tilde{e}_s\text{ depends only on }s\text{-component})\\
=&\tilde{e}_s\circ \pi_{(S,s)}^{\boxtimes e(C)}.&
\end{align*}

2. 
\begin{align*}
d\circ (\tilde{e}_s\circ \pi_{(S,s)}^{\boxtimes e(C)})_{s\in S}
=& (e_t\circ \pi_{(S\times S,(t,t))}^C)_{t\in S}
  \circ ((e_t\pi_{(S,s)}^C)_{t\in S}
          \circ (\pi_{(S\times S,(s,t))}^C)_{t\in S})_{s\in S}\\
=& (e_t\circ \pi_{(S\times S,(t,t))}^C)_{t\in S}
  \circ ((e_t\pi_{(S\times S,(s,s))}^C)_{t\in S})_{s\in S}\\
=& (e_t\circ e_t\pi_{(S\times S,(t,t))}^C)_{t\in S}\\
=& (e_t\circ \pi_{(S\times S,(t,t))}^C)_{t\in S}\\
=& d.
\end{align*}

3.
\begin{align*}
d\circ (\pi_{(S,0)}^{\boxtimes e(C)})_{s\in S}
=& (e_t\circ \pi_{(S\times S,(t,t))}^C)_{t\in S}
  \circ ((e_t\circ \pi_{(S\times S,(0,t))}^C)_{t\in S})_{s\in S}\\
=& (e_t\circ (e_t\circ \pi_{(S\times S,(0,t))}^C))_{t\in S}\\
=& (e_t\circ \pi_{(S\times S,(0,t))}^C)_{t\in S}\\
=& \pi_{(S,0)}^{\boxtimes e(C)}.
\end{align*}
\end{proof}

Conversely, a clone that has a diagonal pair
is decomposed with respect to matrix product.
In this sense, matrix product is characterized by existence
of a diagonal pair.

\begin{thm}
Let $C$ be a single-sorted clone, $(d,(e_s)_{s\in S})$ be a diagonal pair.
Then the clone $C$ is isomorphic to $\boxtimes_{s\in S}e_s(C)$.
\end{thm}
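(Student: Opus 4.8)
The plan is to exhibit an explicit family of maps $\Phi_\lambda\colon C_\lambda\to(\boxtimes_{s\in S}e_s(C))_\lambda$ together with an inverse $\Psi_\lambda$, both built directly from the diagonal pair $(d,(e_s)_{s\in S})$, and to show that $\Phi$ is a clone homomorphism with two-sided inverse $\Psi$. Before doing so I would record two preliminary facts. First, the matrix product $\boxtimes_{s\in S}e_s(C)$ is only defined when each $e_s$ is idempotent, so I must check $e_s\circ e_s=e_s$; this follows from the diagonal identities, since composing identity~2 on the left with $e_t$ and invoking identity~1 gives $e_t\circ e_t\circ\pi_{(S,t)}=e_t\circ\pi_{(S,t)}$, and composing on the right with the constant tuple $(\pi_{(1,0)})_{u\in S}$ and using the identity laws yields $e_t\circ e_t=e_t$. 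Second, I would extract the more flexible forms $d\circ(g)_{s\in S}=g$ for every $\lambda$-ary term $g$ (from identity~3, by composing on the right with the constant tuple $(g)_{v\in S}$) and, combining this with identity~2, the form $d\circ(e_s\circ g)_{s\in S}=g$. The guiding picture is the algebra-level bijection $a\mapsto(e_s(a))_{s\in S}$ from a $C$-algebra $A$ to $\prod_{s\in S}e_s(A)$, with inverse $(b_s)_{s\in S}\mapsto d((b_s)_{s\in S})$; the two clone maps below are its term-level shadows.

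Next I would define the maps. For $w\in C_\lambda$ set
\[
\Phi_\lambda(w):=\Big(e_s\circ w\circ\big(d(\pi_{(\lambda\times S,(i,t))})_{t\in S}\big)_{i\in\lambda}\ /\mc\sim_\lambda\Big)_{s\in S},
\]
and for $(f_s/\mc\sim_\lambda)_{s\in S}\in(\boxtimes_{s\in S}e_s(C))_\lambda$ set
\[
\Psi_\lambda\big((f_s/\mc\sim_\lambda)_{s\in S}\big):=d\Big(\big(f_s\circ(e_t\circ\pi_{(\lambda,i)})_{(i,t)\in\lambda\times S}\big)_{s\in S}\Big).
\]
I would first check that $\Phi_\lambda$ lands in $\boxtimes_{s\in S}e_s(C)$, i.e.\ that its $s$-component $f_s$ satisfies $e_s\circ f_s=f_s$, which is immediate from idempotency. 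For $\Psi_\lambda$ I must check independence of representatives: the clone-law identity $f_s\circ(e_t\circ\pi_{(\lambda,i)})_{(i,t)}=(f_s\circ(e_t\pi_{(\lambda\times S,(i,t))})_{(i,t)})\circ(\pi_{(\lambda,i)})_{(i,t)}$ shows the argument of $d$ depends only on the $\sim_\lambda$-classes, so $\Psi_\lambda$ is well defined.

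Then I would verify that $\Phi$ is a clone homomorphism and that $\Psi$ is its two-sided inverse, whence $\Phi$ is a clone isomorphism. Preservation of projections is a one-line computation: $\Phi_\lambda(\pi_{(\lambda,i)})$ reduces, via identity~1, to $(e_s\circ\pi_{(\lambda\times S,(i,s))}/\mc\sim_\lambda)_{s\in S}$, which is exactly $\pi_{(\lambda,i)}$ in the matrix product. Preservation of composition is the longest step, a direct substitution calculation rewriting nested projections over $\lambda\times S$ and repeatedly using idempotency together with identity~1. For $\Psi\circ\Phi=\id$ I would compute $\Psi_\lambda(\Phi_\lambda(w))$, simplifying the inner composite by the derived identity $d((e_t\circ\pi_{(\lambda,i)})_{t\in S})=\pi_{(\lambda,i)}$ (a consequence of identities~2 and~3) down to $e_s\circ w$, and then applying $d\circ(e_s\circ g)_{s\in S}=g$ with $g=w$ to conclude $\Psi_\lambda(\Phi_\lambda(w))=w$. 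For $\Phi\circ\Psi=\id$ I would, using identity~1 and $e_s\circ f_s=f_s$, reduce the $s$-component of $\Phi_\lambda(\Psi_\lambda((f_s/\mc\sim_\lambda)_s))$ to $f_s\circ(e_t\circ\pi_{(\lambda\times S,(i,t))})_{(i,t)}$, and then observe this is $\sim_\lambda$-equivalent to $f_s$, since pre-composing either with $(e_u\pi_{(\lambda\times S,(i,u))})_{(i,u)}$ produces the same term by idempotency.

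The main obstacle I anticipate is not any single conceptual difficulty but the index bookkeeping in the composition-preservation step and in $\Phi\circ\Psi=\id$: every term lives over the doubled arity $\lambda\times S$, and the verifications hinge on interchanging $d$ with projections through identity~1 and collapsing $e_t\circ e_t$ to $e_t$ at exactly the right slots. Establishing idempotency of the $e_s$ from the diagonal identities at the outset is essential, since without it neither is the target clone defined nor do these simplifications go through.
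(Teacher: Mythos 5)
Your proposal is correct and follows essentially the same route as the paper: your $\Phi_\lambda$ is exactly the paper's map $\ph_\lambda:f\mapsto (e_s\circ f\circ(d\circ(\pi_{(\lambda\times S,(i,t))})_{t\in S})_{i\in\lambda}/\mc\sim_\lambda)_{s\in S}$, and your $\Psi_\lambda$ is the paper's preimage construction $\tilde f=d\circ(f_s)_{s\in S}\circ(e_t\pi_{(\lambda,i)})_{(i,t)\in\lambda\times S}$, so that your verification of $\Phi\circ\Psi=\id$ and $\Psi\circ\Phi=\id$ repackages the paper's surjectivity and injectivity computations. The only additions are cosmetic but welcome: you explicitly derive idempotency of the $e_s$ from the diagonal-pair identities (which the paper uses without proof) and you check well-definedness of $\Psi_\lambda$ on $\sim_\lambda$-classes.
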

\begin{proof}
Suppose $(d,(e_s)_{s\in S})$ is a diagonal pair of $C$.
We define $\ph_\lambda:C_\lambda\ra (\boxtimes_{s\in S}e_s(C))_\lambda$
as
\[
\ph_\lambda:f\mapsto 
 (e_s\circ f\circ 
  (d\circ(\pi_{(\lambda\times S,(i,t))})_{t\in S})
   _{i\in\lambda}/\mc\sim_\lambda)_{s\in \lambda}.
\]
Note that $\ph_{\lambda}(f)\in (\boxtimes_{s\in S}e_s(C))_\lambda$
follows from $e_s\circ e_s=e_s$.

We prove injectivity of $\ph_\lambda$.
Let $f,g\in C_\lambda$ and assume $\ph_\lambda(f)=\ph_\lambda(g)$,
namely, assume the equality
\begin{align*}
 &e_sf\circ(d\circ(e_t\pi_{(\lambda\times S,(i,t))})_{t\in S})_{i\in\lambda}\\
=&e_sf\circ (d\circ (\pi_{(\lambda\times S,(i,t))})_{t\in S})_{i\in\lambda}
  \circ (e_t\pi_{(\lambda\times S,(i,t))})_{(i,t)\in\lambda\times S}\\
=&e_sg\circ (d\circ (\pi_{(\lambda\times S,(i,t))})_{t\in S})_{i\in\lambda}
  \circ (e_t\pi_{(\lambda\times S,(i,t))})_{(i,t)\in\lambda\times S}\\
=&e_sg\circ(d\circ(e_t\pi_{(\lambda\times S,(i,t))})_{t\in S})_{i\in\lambda}
\end{align*}
hold for all $s\in S$. Then
\begin{align*}
f
=&d\circ (e_s)_{s\in S}\circ f\circ (\pi_{(\lambda,i)})_{i\in\lambda}\\
=&d\circ (e_s)_{s\in S}\circ f\circ 
  (d\circ (\pi_{(\lambda,i)})_{t\in S})_{i\in\lambda}\\
=&d\circ (e_s)_{s\in S}\circ f\circ 
  (d\circ (\pi_{(\lambda\times S,(i,t))})_{t\in S})_{i\in\lambda}
  \circ (\pi_{(\lambda,i)})_{(i,t)\in\lambda\times S}\\
=&d\circ (e_s)_{s\in S}\circ f\circ 
  (d\circ (e_t\pi_{(\lambda\times S,(i,t))})_{t\in S})_{i\in\lambda}
  \circ (\pi_{(\lambda,i)})_{(i,t)\in\lambda\times S}\\
=&d\circ (e_s)_{s\in S}\circ g\circ 
  (d\circ (e_t\pi_{(\lambda\times S,(i,t))})_{t\in S})_{i\in\lambda}
  \circ (\pi_{(\lambda,i)})_{(i,t)\in\lambda\times S}\\
=&g.
\end{align*}

Next, we prove surjectivity of $\ph_\lambda$.
Let $(f_s\mc/\mc\sim_{\lambda})_{s\in S}\in(\boxtimes_{s\in S}e_s(C))_\lambda$.
We define $\tilde{f}\in C_\lambda$ as
\[
\tilde{f}:=d\circ (f_s)_{s\in S}\circ 
(e_t\pi_{(\lambda,i)})_{(i,t)\in\lambda\times S}.
\]
Then
\begin{align*}
\ph_{\lambda}(\tilde{f})
=&\left(e_sd\circ (f_t)_{t\in S}
  \circ (e_t\pi_{(\lambda,i)})_{(i,t)\in\lambda\times S}
  \circ (d\circ (\pi_{(\lambda\times S,(i,u))})_{u\in S})_{i\in\lambda}\mc/\mc\sim_{\lambda}\right)_{s\in S}\\
=&\left(e_sf_s
  \circ (e_td\circ (\pi_{(\lambda\times S,(i,u))})_{u\in S})_{(i,t)\in\lambda\times S}\mc/\mc\sim_{\lambda}\right)_{s\in S}\\
=&\left(f_s
  \circ (e_t\pi_{(\lambda\times S,(i,t))})_{(i,t)\in\lambda\times S}\mc/\mc\sim_{\lambda}\right)_{s\in S}\\
=&(f_s\mc/\mc\sim_{\lambda})_{s\in S}.
\end{align*}

Finally, we prove $\ph=(\ph_{\lambda})_{\lambda<\kappa}$ is a homomorphism.
Let $\lambda_1,\lambda_2<\kappa$ and $f\in C_{\lambda_1}$,
$g_i\in C_{\lambda_2}$ for $i\in\lambda_1$.
Then
\begin{align*}
&\ph_{\lambda_2}(f\circ (g_i)_{i\in\lambda_1})\\
=&\left(e_s f\circ (g_i)_{i\in\lambda_1}
  \circ (d\circ (\pi_{(\lambda_2\times S,(j,t))})_{t\in S})_{j\in\lambda_2}\mc/\mc\sim_{\lambda_2}\right)_{s\in S}\\
=&\left(e_sf\circ (d\circ (g_i)_{u\in S})_{i\in\lambda_1}
  \circ (d\circ (\pi_{(\lambda_2\times S,(j,t))})_{t\in S})_{j\in\lambda_2}\mc/\mc\sim_{\lambda_2}\right)_{s\in S}\\
=&\left(e_s f\circ (d\circ (e_ug_i)_{u\in S})_{i\in\lambda_1}
  \circ (d\circ (\pi_{(\lambda_2\times S,(j,t))})_{t\in S})_{j\in\lambda_2}\mc/\mc\sim_{\lambda_2}\right)_{s\in S}\\
=&\left(e_s f\circ (d\circ (\pi_{(\lambda_1\times S,(i,u))})_{u\in S})_{i\in\lambda_1}
  \circ (e_ug_i)_{(i,u)\in\lambda_1\times S}
  \circ (d\circ (\pi_{(\lambda_2\times S,(j,t))})_{t\in S})_{j\in\lambda_2}\mc/\mc\sim_{\lambda_2}\right)_{s\in S}\\
=&\left(e_s f\circ (d\circ (\pi_{(\lambda_1\times S,(i,u))})_{u\in S})_{i\in\lambda_1}\mc/\mc\sim_{\lambda_1}\right)_{s\in S}
  \circ \left(e_ug_i
  \circ (d\circ (\pi_{(\lambda_2\times S,(j,t))})_{t\in S})_{j\in\lambda_2}\mc/\mc\sim_{\lambda_2}\right)_{(i,u)\in \lambda_1\times S}\\
=&\ph_{\lambda_1}(f)\circ (\ph_{\lambda_2}(g_i))_{i\in\lambda_1}.
\end{align*}
\end{proof}

\section{Heterogenization} \label{s-heterogenization}

Let $M$ be an $S$-sorted pure clone.
Then the homogenization ${\rm H}(M)$ has an $S$-ary diagonal pair.
The converse, if a clone $C$ has an $S$-ary diagonal pair
then there exists an $S$-sorted pure clone $M$ such that ${\rm H}(M)$
is isomorphic to $C$, is also true.
In this section, we explain the construction of the many-sorted clone $M$ from a single-sorted clone $C$.
The isomorphism is proved in the next section.

\begin{df}\label{heterogenization-clone}
Let $C$ be a single-sorted $<\mc\kappa$-ary clone. Assume $C$ has an $S$-ary diagonal pair $(d,(e_s)_{s\in S})$.
We define an $S$-sorted $<\mc\kappa$-ary clone $C_{(d,(e_s)_{s\in S})}$ as follows:
\begin{itemize}
\item (Underlying set)
For $\lambda<\kappa$, $v:\lambda\ra S$ and $s\in S$,
the underlying set of the sort $(\lambda,v,t)$
is
\[
C_{(d,(e_s)_{s\in S}),(\lambda,v,t)}
:=\{f\in C_\lambda\mid
 e_tf=f\}/\mc\sim_{\lambda,v},
\]
where the equivalence $\sim_{\lambda,v}$ is defined as
\begin{align*}
f\sim_{\lambda,v} g &:\Longleftrightarrow 
f\circ (e_{v(i)}\circ \pi_{(\lambda,i)})_{i\in\lambda}
=g\circ (e_{v(i)}\circ \pi_{(\lambda,i)})_{i\in\lambda}.
\end{align*}
\item (Projection)
For $\lambda<\kappa$, $v:\lambda\ra S$ and $i\in\lambda$,
the projection constant $\pi_{(\lambda,v,i)}$ is defined as
$e_{v(i)}\circ \pi_{(\lambda,i)}\mc/\mc\sim_{\lambda,v}$.
\item (Composition)
For $\lambda_k<\kappa$, $v:\lambda_k\ra S$ ($k=1,2$) and $s\in S$,
the composition operation
\begin{align*}
c_{(\lambda_1,v_1,s),(\lambda_2,v_2)}:&
C_{(d,(e_s)_{s\in S}),(\lambda_1,v_1,s)}\times 
\prod_{i\in \lambda_1}C_{((d,(e_s)_{s\in S}),(\lambda_2,v_2,v_1(i))}\\
\ra & C_{(d,(e_s)_{s\in S}),(\lambda_2,v_2,s)}
\end{align*}
is defined by
$(f/\mc\sim_{\lambda_1,v_1},(g_i/\mc\sim_{\lambda_2,v_2})_{i\in\lambda_1})
\mapsto (f\circ(g_i)_{i\in\lambda_1})/\mc\sim_{\lambda_2,v_2}$.
\end{itemize}
\end{df}

\begin{df}
Let $C$ be a $<\mc\kappa$-ary clone that has a diagonal pair $(d,(e_s)_{s\in S})$.
Let $A$ be a $C$-algebra.
We define a $C_{(d,(e_s)_{s\in S})}$-algebra $A_{(d,(e_s)_{s\in S})}$ as follows:
\begin{itemize}
\item (Underlying set)
The underlying set $A_{(d,(e_s)_{s\in S}),s}$ of the sort $s\in S$ is $e_s(A)$.
\item (Action of $C_{(d,(e_s)_{s\in S})}$)
A term $f/\mc\sim_{\lambda,v}\in C_{(d,(e_s)_{s\in S}),(\lambda,v,s)}$ acts as
\[
\prod_{i\in\lambda} A_{(d,(e_s)_{s\in S}),v(i)}\ni(a_i)_{i\in\lambda}
\mapsto f(a_i)_{i\in\lambda}\in A_{(d,(e_s)_{s\in S}),s}.
\]
\end{itemize}
\end{df}

\begin{rem}\label{hetero-representative}
In the Definition \ref{heterogenization-clone},
$g:=f\circ (e_{v(i)}\circ \pi_{(\lambda,i)})_{i\in\lambda}\sim_{\lambda,v} f$
holds for $f\in C_\lambda$. Moreover $g$ satisfy
$g\circ (e_{v(i)}\circ \pi_{(\lambda,i)})_{i\in\lambda}=g$.
Thus for any element $\tilde{f}$ of $C_{(d,(e_s)_{s\in S}),(\lambda,v,t)}$,
we can choose a term $g\in C_\lambda$ that satisfies
$g\circ (e_{v(i)}\circ \pi_{(\lambda,i)})_{i\in\lambda}=g$
as a representative for $\tilde{f}$.
\end{rem}

The structure of the $S$-sorted clone $C_{(d,(e_s)_{s\in S})}$ and 
the algebra $A_{(d,(e_s)_{s\in S})}$
defined above do not
depend on the family of projection idempotents $(e_s)_{s\in S}$.
Next, we prove this fact.
We start from the following lemma.
\begin{lem}\label{different-projection}
Let $C$ be a single-sorted $<\mc\kappa$-ary clone and ${(d,(e_s)_{s\in S})}$, ${(d,(e'_s)_{s\in S})}$
be diagonal pairs of $C$.
Then $e_se'_s=e_s$ holds for $s\in S$.
\end{lem}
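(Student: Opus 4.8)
The plan is to show that the $S$-ary operation $d$ can be ``collapsed to the identity'' along a diagonal, and then to cancel $d$ from the equation obtained by feeding $d$ into axiom~1 for the primed pair. Writing $\Delta:=(\pi_{(1,0)})_{t\in S}$ for the $S$-tuple all of whose entries are the unary identity $\pi_{(1,0)}$, the central auxiliary fact I would establish is the \emph{diagonal restriction identity}
\[
d\circ \Delta = \pi_{(1,0)}.
\]
This is the single-variable specialization of the diagonal behaviour of $d$, and everything else is formal.

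First I would derive this identity from axiom~3 of the diagonal pair, namely $d\circ(\pi_{(S,0)})_{s\in S}=\pi_{(S,0)}$, which is a property of $d$ alone and so is available for both pairs. Composing both sides on the right with $\Delta$ and using associativity together with the outer identity law $\pi_{(S,0)}\circ\Delta=\pi_{(1,0)}$, the right-hand side becomes $\pi_{(1,0)}$, while the left-hand side becomes $d\circ(\pi_{(S,0)}\circ\Delta)_{s\in S}=d\circ\Delta$. This yields $d\circ\Delta=\pi_{(1,0)}$.

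Next I would invoke axiom~1 for the second diagonal pair, $e'_s\circ d=\pi_{(S,s)}$, which is legitimate precisely because the two pairs share the same $d$. Post-composing with $e_s$ and using associativity gives $(e_s\circ e'_s)\circ d=e_s\circ\pi_{(S,s)}$. Finally I would compose both sides on the right with $\Delta$. On the left, associativity and the diagonal restriction identity collapse $d\circ\Delta$ to $\pi_{(1,0)}$, leaving $e_s\circ e'_s$; on the right, the outer identity law gives $\pi_{(S,s)}\circ\Delta=\pi_{(1,0)}$, leaving $e_s$. Hence $e_s\circ e'_s=e_s$, as claimed.

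Notably, this argument uses only axioms~1 and~3 (the latter through $d$ alone) together with the clone axioms, and never refers to axiom~2 or to the unprimed $e_s$ except via left-composition. The only genuine difficulty is the bookkeeping of arities: keeping straight which composites are unary and which are $S$-ary, and applying associativity in the correct direction. Each individual manipulation is a one-line application of a clone axiom, so I expect this arity tracking — rather than any conceptual step — to be the main obstacle.
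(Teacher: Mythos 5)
There is a genuine gap, and it sits exactly where you put all the weight: the invocation of ``axiom 1'' in the literal form $e'_s\circ d=\pi_{(S,s)}$. That is indeed what item 1 of Definition \ref{diagonal-pair} says on its face, but it cannot be what is meant, and your own auxiliary identity exposes the problem: writing $\Delta=(\pi_{(1,0)})_{t\in S}$ as you do, the combination of $d\circ\Delta=\pi_{(1,0)}$ with $e'_s\circ d=\pi_{(S,s)}$ yields $e'_s=e'_s\circ(d\circ\Delta)=(e'_s\circ d)\circ\Delta=\pi_{(S,s)}\circ\Delta=\pi_{(1,0)}$, so every $e_s$ and $e'_s$ would be the unary identity and the lemma (indeed the whole notion of diagonal pair) would be vacuous. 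Every construction in the paper that actually produces a diagonal pair --- the proposition immediately preceding Definition \ref{diagonal-pair}, the proposition exhibiting a diagonal pair of a matrix product, and Proposition \ref{clone-many} --- establishes only the weaker identity $e_s\circ d=e_s\circ\pi_{(S,s)}$, and the paper's own one-line proof of this lemma uses axiom 1 in that weaker form. The display in the definition is a typo; the intended first axiom is $e_s\circ d=e_s\circ\pi_{(S,s)}$.

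Under the intended axiom your argument collapses to a tautology: left-composing $e'_s\circ d=e'_s\circ\pi_{(S,s)}$ with $e_s$ and right-composing with $\Delta$ gives only $e_s\circ e'_s=e_s\circ e'_s$. Your closing remark that axiom 2 is never needed is precisely the warning sign, because axiom 2 for the primed pair is the heart of the matter. The paper's proof amounts to
\[
e_se'_s
= e_s\circ\pi_{(S,s)}\circ(e'_t)_{t\in S}
= e_s\circ d\circ(e'_t)_{t\in S}
= e_s\circ d\circ(e'_t\circ\pi_{(S,t)})_{t\in S}\circ\Delta
= e_s\circ d\circ\Delta
= e_s\circ\pi_{(S,s)}\circ\Delta
= e_s,
\]
where the second and fifth equalities use axiom 1 for the unprimed pair in the form $e_s\circ d=e_s\circ\pi_{(S,s)}$, and the fourth uses axiom 2 for the primed pair, $d\circ(e'_t\circ\pi_{(S,t)})_{t\in S}=d$; that is the only step eliminating the $e'_t$, and it cannot be dispensed with. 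Your diagonal restriction identity $d\circ\Delta=\pi_{(1,0)}$ is correct and is in effect the final step above, but the middle of the argument has to pass through axiom 2.
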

\begin{proof}
\[
e_se'_s=e_sd(e'_t\pi_{(\{0\},0)})_{t\in S}=e_sd(\pi_{(\{0\},0)})_{t\in S}=e_s.
\qedhere
\]
\end{proof}
\begin{prop}
Let $C$ be a $<\mc\kappa$-ary clone and ${(d,(e_s)_{s\in S})}$, ${(d,(e'_s)_{s\in S})}$
be diagonal pairs of $C$.
Then $C_{(d,(e_s)_{s\in S})}$ is isomorphic to  $C_{(d,(e'_s)_{s\in S})}$ via an isomorphism
$\ph=(\ph_{(\lambda,v,s)})$ where
\[
\ph_{(\lambda,v,s)}:C_{(d,(e_s)_{s\in S}),(\lambda,v,s)}\ni f/\mc \sim\mapsto 
e'_sf(e_{v(i)}\pi_{(\lambda,i)})_{i\in\lambda}/\mc\sim'
\in C_{(d,(e'_s)_{s\in S}),(\lambda,v,s)}.
\]
Here, $\sim$ and $\sim'$ are equivalences defined as
\begin{align*}
f\sim g:\Longleftrightarrow 
  f\circ (e_{v(i)}\circ \pi_{(\lambda,i)})_{i\in\lambda}
 =g\circ (e_{v(i)}\circ \pi_{(\lambda,i)})_{i\in\lambda},\\
f\sim' g:\Longleftrightarrow 
  f\circ (e'_{v(i)}\circ \pi_{(\lambda,i)})_{i\in\lambda}
 =g\circ (e'_{v(i)}\circ \pi_{(\lambda,i)})_{i\in\lambda}.
\end{align*}
\end{prop}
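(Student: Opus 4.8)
The plan is to prove that $\ph=(\ph_{(\lambda,v,s)})$ is a well-defined, bijective homomorphism of $S$-sorted clones, whose inverse is the map $\psi$ obtained by interchanging the roles of $(e_s)_{s\in S}$ and $(e'_s)_{s\in S}$, namely $\psi_{(\lambda,v,s)}:g/\mc\sim'\mapsto e_sg(e'_{v(i)}\pi_{(\lambda,i)})_{i\in\lambda}/\mc\sim$. Everything rests on Lemma \ref{different-projection}: applied to the two pairs it yields $e_se'_s=e_s$; applied with the pairs interchanged it yields $e'_se_s=e'_s$; and applied with a single pair it yields the idempotencies $e_se_s=e_s$ and $e'_se'_s=e'_s$. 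The one computation I shall invoke throughout is that the $i$-th component of $(e_{v(i)}\pi_{(\lambda,i)})_{i\in\lambda}\circ(e'_{v(j)}\pi_{(\lambda,j)})_{j\in\lambda}$ is $e_{v(i)}e'_{v(i)}\pi_{(\lambda,i)}=e_{v(i)}\pi_{(\lambda,i)}$; thus substituting the $e'$-projection tuple into the $e$-projection tuple reproduces the $e$-projection tuple, and symmetrically.

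First I would verify that each $\ph_{(\lambda,v,s)}$ is well defined and lands in the correct sort. Left composition by $e'_s$ fixes the representative $e'_sf(e_{v(i)}\pi_{(\lambda,i)})_{i\in\lambda}$ because $e'_se'_s=e'_s$, so the image lies in $C_{(d,(e'_s)),(\lambda,v,s)}$. For independence of the chosen $\sim$-representative, I would post-compose the image by $(e'_{v(j)}\pi_{(\lambda,j)})_{j\in\lambda}$: by the component identity noted above this leaves $e'_sf(e_{v(i)}\pi_{(\lambda,i)})_i$ unchanged, so the defining condition of $\sim'$ reduces to $e'_sf(e_{v(i)}\pi_{(\lambda,i)})_i=e'_sg(e_{v(i)}\pi_{(\lambda,i)})_i$, which is obtained by composing the hypothesis $f\sim g$ on the left with $e'_s$. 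Preservation of projections is then a short check: evaluating $\ph$ on $e_{v(i)}\pi_{(\lambda,i)}$ and simplifying the inner composite by idempotence of $e_{v(i)}$ and then by $e'_{v(i)}e_{v(i)}=e'_{v(i)}$ yields exactly $e'_{v(i)}\pi_{(\lambda,i)}/\mc\sim'$.

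The main obstacle is preservation of composition, the one genuinely long manipulation. Choosing, by the definition of the underlying sets in Definition \ref{heterogenization-clone}, representatives $g_i$ with $e_{v_1(i)}g_i=g_i$, I would expand the clone composite $\ph(f)\circ(\ph(g_i))_{i\in\lambda_1}$ in $C_{(d,(e'_s))}$ and reduce the term substituted into the $i$-th slot of $f$: it is $e_{v_1(i)}\pi_{(\lambda_1,i)}\circ(e'_{v_1(k)}g_k(e_{v_2(j)}\pi_{(\lambda_2,j)})_j)_k=e_{v_1(i)}e'_{v_1(i)}g_i(e_{v_2(j)}\pi_{(\lambda_2,j)})_j$, which collapses to $g_i(e_{v_2(j)}\pi_{(\lambda_2,j)})_j$ using $e_{v_1(i)}e'_{v_1(i)}=e_{v_1(i)}$ and $e_{v_1(i)}g_i=g_i$. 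After one application of associativity the whole composite becomes $e'_sf\circ(g_i)_{i\in\lambda_1}\circ(e_{v_2(j)}\pi_{(\lambda_2,j)})_j/\mc\sim'$, which is precisely $\ph(f\circ(g_i)_{i\in\lambda_1})$; hence $\ph$ respects composition.

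Finally I would establish bijectivity by checking that $\psi$ is a two-sided inverse. For $\psi\circ\ph$, taking a representative $f$ with $e_sf=f$, I would compute $\psi(\ph(f))=e_se'_sf(e_{v(j)}\pi_{(\lambda,j)})_j(e'_{v(i)}\pi_{(\lambda,i)})_i/\mc\sim$; applying $e_se'_s=e_s$, the component identity $e_{v(j)}e'_{v(j)}=e_{v(j)}$, and then $e_sf=f$ reduces this to $f(e_{v(j)}\pi_{(\lambda,j)})_j/\mc\sim=f/\mc\sim$, the last equality by Remark \ref{hetero-representative}. The identity $\ph\circ\psi=\id$ follows by the symmetric computation. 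Since $\psi$ is, by the symmetric argument, also a clone homomorphism and is two-sided inverse to $\ph$, the map $\ph$ is an isomorphism, so $C_{(d,(e_s)_{s\in S})}\iso C_{(d,(e'_s)_{s\in S})}$, completing the proof.
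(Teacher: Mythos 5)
Your proof is correct and follows essentially the same route as the paper's: the same explicit inverse $\psi$ obtained by swapping the roles of $(e_s)$ and $(e'_s)$, the same reliance on Lemma \ref{different-projection} (in both orders, plus the idempotency case), and the same computations for projections, composition, and the two-sided-inverse identities. The only difference is that you spell out the well-definedness check that the paper dismisses as immediate, which is a harmless elaboration rather than a different argument.
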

\begin{proof}
Well-definedness of $\ph_{(\lambda,v,s)}$ directly follows from the definition.

Let $\psi_{(\lambda,v,s)}:C_{(d,(e'_s)_{s\in S}),(\lambda,v,s)}\ra
C_{(d,(e_s)_{s\in S}),(\lambda,v,s)}$ be the mapping
$g/\mc\sim'\mapsto e_sg(e'_{v(i)}\pi_{(\lambda,i)})_{i\in\lambda}/\mc\sim$.
Then by the previous lemma,
\begin{align*}
\psi_{(\lambda,v,s)}\circ \ph_{(\lambda,v,s)}(f/\mc\sim)
=&\left(e_se'_sf(e_{v(i)}\pi_{(\lambda,i)})_{i\in\lambda}
  \circ(e'_{v(i)}\pi_{(\lambda,i)})_{i\in\lambda}\right)/\mc\sim\\
=&\left(e_sf(e_{v(i)}\pi_{(\lambda,i)})_{i\in\lambda}\right)/\mc\sim\\
=&f/\mc\sim.
\end{align*}
By the same way,
$\ph_{(\lambda,v,s)}\circ \psi_{(\lambda,v,s)}=
\id_{C_{(d,(e'_s)_{s\in S}),(\lambda,v,s)}}$
holds. Thus $\psi_{(\lambda,v,s)}$ is the inverse of $\ph_{(\lambda,v,s)}$.
Particularly, $\ph_{(\lambda,v,s)}$ is bijective.

We prove $\ph$ is compatible with $S$-sorted clone operations.
Let $\pi_{(\lambda,v,i)}=e_{v(i)}\pi_{(\lambda,i)}/\mc\sim$ be the
$(\lambda,v)$-ary $i$-th projection of $C_{(d,(e_s)_{s\in S})}$.
Then
\[
\ph_{(\lambda,v,v(i))}(\pi_{(\lambda,v,i)})
=\left(e'_{v(i)}e_{v(i)}\pi_{(\lambda,i)}(e_{v(j)}\pi_{(\lambda,j)})_{j\in\lambda}\right)/\mc\sim'
=(e'_{v(i)}e_{v(i)}\circ e_{v(i)}\pi_{(\lambda,i)})/\mc\sim'
=e'_{v(i)}\pi_{(\lambda,i)}/\mc\sim'
\]
by the previous lemma.
It is the $(\lambda,v)$-ary $i$-th projection of $C_{(d,(e_s)_{s\in S})}$.

Next, let $\lambda_k<\kappa$, $v_k:\lambda_k\ra S$ ($k=1,2$),
$s\in S$ and 
\[
f/\mc\sim\in C_{(d,(e_s)_{s\in S}),(\lambda_1,v_1,s)},
g_i/\mc\sim\in C_{(d,(e_s)_{s\in S}),(\lambda_2,v_2,v_1(i))}
\]
for $i\in\lambda_1$.
Then
\begin{align*}
&\ph_{(\lambda_1,v_1,s)}(f/\mc\sim)
 \circ (\ph_{(\lambda_2,v_2,v_1(i))}(g_i/\mc\sim))_{i\in\lambda_1}&\\
=&(e'_sf(e_{v_1(i)}\pi_{(\lambda_1,i)})_{i\in\lambda_1}/\mc\sim)\circ 
 (e'_{v_1(i)}g_i(e_{v_2(j)}\pi_{(\lambda_2,j)})_{j\in\lambda_1}/\mc\sim)_{i\in\lambda_1}&\\
=&\left(e'_sf(e_{v_1(i)}e'_{v_1(i)}g_i)_{i\in\lambda_1}\circ 
  (e_{v_2(j)}\pi_{(\lambda_2,j)})_{j\in\lambda_2}\right)/\mc\sim'&\\
=&\left(e'_sf(e_{v_1(i)}g_i)_{i\in\lambda_1}\circ 
  (e_{v_2(j)}\pi_{(\lambda_2,j)})_{j\in\lambda_2}\right)/\mc\sim'
  &(\text{Lemma \ref{different-projection}})\\
=&\left(e'_sf(g_i)_{i\in\lambda_1}\circ 
  (e_{v_2(j)}\pi_{(\lambda_2,j)})_{j\in\lambda_2}\right)/\mc\sim'
  &(g_i/\mc\sim\in C_{(d,(e_s)_{s\in S})})\\
=&\ph_{(\lambda_2,v_2,s)}(f\circ (g_i)_{i\in\lambda_1}).&
\end{align*}
\end{proof}

Under the identification by this isomorphism, the $S$-sorted algebras 
by $(d,(e_s)_{s\in S})$ and $(d,(e'_s)_{s\in S})$ are isomorphic.
\begin{prop}
Let $C$ be a single-sorted $<\mc\kappa$-ary clone, ${(d,(e_s)_{s\in S})}$, ${(d,(e'_s)_{s\in S})}$
be diagonal pairs of $C$ and \mbox{$A=(A_s)_{s\in S}$} be a $C$-algebra.
Then $A_{(d,(e_s)_{s\in S})}$ and $A_{(d,(e'_s)_{s\in S})}$ are isomorphic via
$\ph=(\ph_t)_{t\in S}$ where
\[
\ph_s:A_{(d,(e_s)_{s\in S}),t}\ni a\mapsto e'_t(a)\in A_{(d,(e'_s)_{s\in S}),t}.
\]
\end{prop}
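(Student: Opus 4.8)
The plan is to verify that $\ph=(\ph_t)_{t\in S}$ is an isomorphism of many-sorted algebras, where the two algebras are regarded as algebras over a common clone through the isomorphism $\Phi\colon C_{(d,(e_s)_{s\in S})}\to C_{(d,(e'_s)_{s\in S})}$ supplied by the preceding proposition (which sends $f/\mc\sim_{\lambda,v}$ to $e'_sf(e_{v(i)}\pi_{(\lambda,i)})_{i\in\lambda}/\mc\sim'$). Thus ``isomorphic'' means that each component $\ph_t$ is a bijection $e_t(A)\to e'_t(A)$ and that $\ph$ intertwines the two actions along $\Phi$. The natural candidate for the inverse of $\ph_t$ is $\psi_t\colon e'_t(A)\ni b\mapsto e_t(b)\in e_t(A)$.

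First I would check that each $\ph_t$ is a well-defined bijection. Since $A_{(d,(e_s)_{s\in S}),t}=e_t(A)$ and $A_{(d,(e'_s)_{s\in S}),t}=e'_t(A)$, the assignment $a\mapsto e'_t(a)$ indeed lands in $e'_t(A)$. For bijectivity, take $a\in e_t(A)$, so $e_t(a)=a$ by idempotence; then $\psi_t(\ph_t(a))=e_t(e'_t(a))=(e_te'_t)(a)=e_t(a)=a$, using $e_te'_t=e_t$ from Lemma \ref{different-projection}. Symmetrically $\ph_t(\psi_t(b))=b$ for $b\in e'_t(A)$, using $e'_te_t=e'_t$, so $\psi_t$ is a two-sided inverse and $\ph_t$ is bijective.

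The core step is the homomorphism condition. Given a term $f/\mc\sim_{\lambda,v}\in C_{(d,(e_s)_{s\in S}),(\lambda,v,s)}$ and elements $a_i\in e_{v(i)}(A)$, the left-hand side is $\ph_s(f(a_i)_{i\in\lambda})=e'_s(f(a_i)_{i\in\lambda})$, while the right-hand side, obtained by acting with $\Phi(f/\mc\sim_{\lambda,v})$ on $(\ph_{v(i)}(a_i))_{i\in\lambda}=(e'_{v(i)}(a_i))_{i\in\lambda}$, is $e'_sf(e_{v(i)}(e'_{v(i)}(a_i)))_{i\in\lambda}$. Applying $e_{v(i)}e'_{v(i)}=e_{v(i)}$ (Lemma \ref{different-projection}) and then $e_{v(i)}(a_i)=a_i$ (as $a_i\in e_{v(i)}(A)$) collapses the right-hand side to $e'_s(f(a_i)_{i\in\lambda})$, matching the left-hand side.

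I expect the only genuine subtlety to be conceptual rather than computational: one must first fix precisely what ``isomorphic'' means when the two objects are nominally algebras over the distinct clones $C_{(d,(e_s)_{s\in S})}$ and $C_{(d,(e'_s)_{s\in S})}$, namely that $\ph$ is an isomorphism relative to the clone isomorphism $\Phi$ of the preceding proposition. Once this is settled, every verification reduces to the single identity $e_se'_s=e_s$ of Lemma \ref{different-projection} together with idempotence of the $e_s$ and $e'_s$, so the remaining work is routine.
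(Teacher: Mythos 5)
Your proposal is correct and follows essentially the same route as the paper: bijectivity of each $\ph_t$ via the two-sided inverse $b\mapsto e_t(b)$ using Lemma \ref{different-projection} (in both orders, $e_te'_t=e_t$ and $e'_te_t=e'_t$), and the homomorphism condition via the chain $e'_sf(a_i)_{i\in\lambda}=e'_sf(e_{v(i)}e'_{v(i)}(a_i))_{i\in\lambda}$, which is exactly the paper's displayed computation read in reverse. Your explicit remark that ``isomorphic'' must be understood relative to the clone isomorphism $\Phi$ of the preceding proposition matches the paper's framing (``under the identification by this isomorphism'') and is a point worth making explicit.
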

\begin{proof}
By Lemma \ref{different-projection},
\[
a=e'_te_t(a), a'=e_te'_t(a')
\]
hold for $a\in A_{(d,(e_s)_{s\in S}),t}$ and $a'\in A_{(d,(e_s)_{s\in S}),t}$.
Thus $a'\mapsto e_t(a')$ is the inverse of $\ph_t$ and $\ph_t$ is bijective.

Let $f/\mc\sim\in C_{(d,(e_s)_{s\in S}),(\lambda,v,t)}$ and $a_i\in A_{(d,(e_s)_{s\in S}),v(i)}$.
Then
\begin{align*}
\ph_t((f/\mc\sim)(a_i)_{i\in \lambda})
&=e'_tf(a_i)_{i\in\lambda}\\
&=e'_tf(e_{v(i)}(a_i))_{i\in\lambda}\\
&=e'_tf(e_{v(i)}e'_{v(i)}(a_i))_{i\in\lambda}\\
&=e'_tf\circ(e_{v(i)}\pi_{(\lambda,i)})_{i\in\lambda}(e'_{v(i)}(a_i))_{i\in\lambda}\\
&=\ph_{(\lambda,v,t)}(f/\mc\sim)(\ph_{v(i)}(a_i))_{i\in\lambda}.
\end{align*}
This equation means $\ph=(\ph_t)_{t\in S}$ is a homomorphism
$A_{(d,(e_s)_{s\in S})}$ to $A_{(d,(e'_s)_{s\in S})}$.
\end{proof}

In the following paragraph, we omit projection idempotents for writing heterogenization.
Namely, we simply write $C_{d}$ the $S$-sorted clone $C_{(d,(e_s)_{s\in S})}$.

We end this section to prove the correspondence $A\mapsto A_d$ above is extended to a functor.
\begin{prop}\label{single-many}
Let $C$ be a single-sorted $<\mc\kappa$-ary clone, 
$(d,(e_s)_{s\in S})$ be a diagonal pair of $C$, $\ph:A\ra B$ be a homomorphism of $C$-algebras.
Then $\ph_d:A_d\ra B_d$ defined as $\ph_{d,s}(a):=\ph(a)$ for $s\in S,a\in A_{d,s}$
is a homomorphism of $C_d$-algebras.
Further, the correspondence $d$ ($A\mapsto A_d$ for $A\in\V(C)$ and
$\ph\mapsto \ph_d$ for morphisms of $C$-algebras) is a functor
$\Cat(\V(C))\ra \Cat(\V(C_d))$.
\end{prop}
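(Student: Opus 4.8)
We must show that for a single-sorted clone $C$ with diagonal pair $(d,(e_s)_{s\in S})$ and a $C$-homomorphism $\ph:A\ra B$, the tuple $\ph_d=(\ph_{d,s})_{s\in S}$ given by $\ph_{d,s}(a):=\ph(a)$ is a $C_d$-homomorphism $A_d\ra B_d$, and that $d$ is a functor $\Cat(\V(C))\ra\Cat(\V(C_d))$.

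The plan is to verify the three things a functor between these categories requires: that $\ph_{d,s}$ lands in the right underlying set, that $\ph_d$ respects the action of $C_d$, and that the assignment respects identities and composition. First I would check that $\ph_{d,s}$ is well-defined as a map $A_{d,s}\ra B_{d,s}$. The underlying set of the sort $s$ of $A_d$ is $e_s(A)=\{e_s(a)\mid a\in A\}$, and likewise for $B_d$; so I need that $\ph$ carries $e_s(A)$ into $e_s(B)$. This is immediate from $\ph$ being a $C$-homomorphism: for $a\in A$ we have $\ph(e_s(a))=e_s(\ph(a))\in e_s(B)$, since $\ph$ commutes with the action of the unary term $e_s$. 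Thus $\ph_{d,s}$ is a genuine map into the correct fibre.

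Next I would establish the homomorphism property. Fix a term $f/\!\sim_{\lambda,v}\in C_{d,(\lambda,v,t)}$ and elements $a_i\in A_{d,v(i)}$ for $i\in\lambda$. The action of $f/\!\sim_{\lambda,v}$ on $A_d$ is $(a_i)_{i\in\lambda}\mapsto f(a_i)_{i\in\lambda}$, where on the right $f$ acts via its $C$-action on $A$; similarly on $B_d$. Then I would compute
\[
\ph_{d,t}\bigl((f/\!\sim_{\lambda,v})(a_i)_{i\in\lambda}\bigr)
=\ph\bigl(f(a_i)_{i\in\lambda}\bigr)
=f(\ph(a_i))_{i\in\lambda}
=(f/\!\sim_{\lambda,v})(\ph_{d,v(i)}(a_i))_{i\in\lambda},
\]
the middle equality being exactly the hypothesis that $\ph$ is a $C$-homomorphism. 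One subtlety to address here is independence of the chosen representative $f$: if $f\sim_{\lambda,v}g$, the two act identically on $A_d$ because every $a_i$ satisfies $e_{v(i)}(a_i)=a_i$, so precomposition with the idempotents $e_{v(i)}\pi_{(\lambda,i)}$ does not change the value; the same holds on $B_d$, so the computation is representative-independent. This uses only that the fibres of $A_d$ and $B_d$ are idempotent retracts, which is how they are defined.

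Finally, functoriality is routine: $\ph_d=(\ph_{d,s})_{s\in S}$ is built sortwise from the single map $\ph$, so $\id_A$ gives $\ph_{d,s}=\id_{e_s(A)}$ and hence $(\id_A)_d=\id_{A_d}$, while for composable $\ph,\psi$ each component satisfies $(\psi\circ\ph)_{d,s}(a)=\psi(\ph(a))=\psi_{d,s}(\ph_{d,s}(a))$, giving $(\psi\circ\ph)_d=\psi_d\circ\ph_d$. I do not expect any genuine obstacle here; the only point needing care is the representative-independence remark above, which is where the idempotency condition $e_s(a_i)=a_i$ on elements of the fibres is actually used, exactly paralleling the earlier verification (Proposition \ref{many-single}) for homogenization.
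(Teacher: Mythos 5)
Your proposal is correct and follows essentially the same route as the paper's proof: first using $\ph(e_s(a))=e_s(\ph(a))$ to see that $\ph_{d,s}$ maps $e_s(A)$ into $e_s(B)$, then transferring the homomorphism identity $\ph(f(a_i)_{i\in\lambda})=f(\ph(a_i))_{i\in\lambda}$ directly to the $C_d$-action, and finally noting that preservation of identities and composition is immediate. Your added remark on representative-independence of the action is a point the paper leaves implicit in the definition of $A_d$, but it does not change the argument.
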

\begin{proof}
At first, notice that $a\in A_{d,s}=e_s(A)$ implies
$\ph(a)=\ph(e_s(a))=e_s(\ph(a))\in e_s(B)=B_s$.
Thus $\ph_{d,s}$ is actually a mapping $A_s\ra B_s$.

Let $f\in C_{d,(\lambda,v,s)}$, $a_i\in A_{d,v(i)}$.
Then
\begin{align*}
\ph_{d,s}((f/\mc\sim)(a_i)_{i\in\lambda})
&=\ph f(a_i)_{i\in\lambda}\\
&=f(\ph (a_i))_{i\in\lambda}\\
&=(f/\mc\sim)(\ph_{d,v(i)} (a_i))_{i\in\lambda}.
\end{align*}
This equation means nothing but $\ph_d$ is a homomorphism.

It is easily verified that $d$ preserves identity and composition,
thus $d$ is a functor.
\end{proof}

\section{Concrete equivalence} \label{s-conrete-equivalence}

The aim of this section is proving that the functors described in 
Proposition \ref{many-single} and \ref{single-many} are
mutually inverse categorical equivalence.

\begin{prop}\label{clone-single}
Let $C$ be a single-sorted $<\mc\kappa$-ary clone that has a diagonal pair $(d,(e_s)_{s\in S})$.
Then ${\rm H}(C_d)$ and $C$ are isomorphic via an isomorphism $\nu=\{\nu_\lambda\}_{\lambda<\kappa}$
\[
\nu_{\lambda}:C_\lambda\ni f\mapsto
(e_sf\circ (d\circ (\pi_{(\lambda\times S,(i,t))})_{t\in S})_{i\in \lambda}/\mc\sim)_{s\in S}
 \in {\rm H}_\lambda(C_d),
\]
where $\sim$ is the following equivalence relation on $C_{\lambda\times S}$:
\[
f_s\sim g_s\ \Longleftrightarrow \
  f_s\circ (e_t\circ \pi_{(\lambda\times S,(i,t))})_{(i,t)\in\lambda\times S}
 =g_s\circ (e_t\circ \pi_{(\lambda\times S,(i,t))})_{(i,t)\in\lambda\times S}.
\]
\end{prop}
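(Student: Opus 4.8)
The plan is to recognize that the single-sorted clone ${\rm H}(C_d)$ is, after unraveling the two constructions, nothing other than the matrix product $\boxtimes_{s\in S}e_s(C)$, so that the proposition collapses to the decomposition theorem of Section~\ref{s-matrix-product}. Concretely, I would show that ${\rm H}(C_d)$ and $\boxtimes_{s\in S}e_s(C)$ coincide as clones, and that the displayed map $\nu_\lambda$ is term-for-term the isomorphism $\ph_\lambda$ already constructed there; the conclusion $C\iso{\rm H}(C_d)$ is then immediate.

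First I would compare the underlying sets sort by sort. By Definition~\ref{homog-clone}, an element of ${\rm H}_\lambda(C_d)$ is a tuple $(f_s)_{s\in S}$ with $f_s$ in the sort $(\lambda\times S,p_2,s)$ of $C_d$. By Definition~\ref{heterogenization-clone} that sort is $\{h\in C_{\lambda\times S}\mid e_s h=h\}/\sim_{\lambda\times S,p_2}$, and since $p_2(i,t)=t$ the relation $\sim_{\lambda\times S,p_2}$ is exactly the relation $\sim$ used in the matrix product, namely $h\sim h'$ iff $h\circ(e_t\pi_{(\lambda\times S,(i,t))})_{(i,t)\in\lambda\times S}=h'\circ(e_t\pi_{(\lambda\times S,(i,t))})_{(i,t)\in\lambda\times S}$. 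Hence an element of ${\rm H}_\lambda(C_d)$ is a tuple $(h_s/\sim)_{s\in S}$ with $e_s h_s=h_s$, which is precisely the underlying set of $(\boxtimes_{s\in S}e_s(C))_\lambda$.

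Next I would match the structure maps. The $\lambda$-ary $i$-th projection of ${\rm H}(C_d)$ is the tuple of $(i,s)$-th projections of $C_d$, each of which by Definition~\ref{heterogenization-clone} equals $e_s\circ\pi_{(\lambda\times S,(i,s))}/\sim$; this is exactly the projection of the matrix product. Composition in ${\rm H}(C_d)$ is computed componentwise through composition in $C_d$, which is ordinary composition of representatives in $C$; this coincides with the componentwise composition defining the matrix product. Therefore ${\rm H}(C_d)$ and $\boxtimes_{s\in S}e_s(C)$ are the same clone.

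Finally, the map $\nu_\lambda$ in the statement is, term for term, the map $\ph_\lambda$ used in the proof of the decomposition theorem of Section~\ref{s-matrix-product}, which was shown there to be a clone isomorphism $C\iso\boxtimes_{s\in S}e_s(C)$. Combined with the identification above, this yields that $\nu=(\nu_\lambda)_{\lambda<\kappa}$ is an isomorphism $C\iso{\rm H}(C_d)$, as required; in particular injectivity, surjectivity, and the homomorphism property need not be reproved, as all three are inherited. The only genuine work lies in the first two steps — reconciling the double index $(i,t)\in\lambda\times S$ and verifying that $\sim_{\lambda\times S,p_2}$ agrees with the matrix-product $\sim$ — and I expect that bookkeeping, rather than any new computation, to be the main (though routine) obstacle.
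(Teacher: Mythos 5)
Your proposal is correct, and it takes a genuinely different (and arguably tidier) route than the paper. The paper proves Proposition \ref{clone-single} by a direct verification: it checks injectivity, surjectivity, and compatibility of $\nu_\lambda$ with explicit computations that are, line for line, the same computations already carried out in the proof of the matrix-product decomposition theorem of Section \ref{s-matrix-product} (whose isomorphism $\ph_\lambda$ is indeed the same formula as $\nu_\lambda$, modulo the typo $s\in\lambda$ for $s\in S$ there). You instead observe that ${\rm H}(C_d)$ and $\boxtimes_{s\in S}e_s(C)$ coincide as clones --- the sort $(\lambda\times S,p_2,s)$ of $C_d$ is $\{h\in C_{\lambda\times S}\mid e_sh=h\}$ modulo exactly the matrix-product equivalence, since $e_{p_2(i,t)}=e_t$, and the projections and compositions match --- and then inherit the isomorphism from the earlier theorem. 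This identification is sound; the only bookkeeping wrinkle you should acknowledge is that $C_d$'s sort is a quotient of a subset of $C_{\lambda\times S}$ while the matrix product is described as a subset of the full quotient, and these must be canonically identified (the restricted equivalence is the restriction of the full one, so the comparison map is injective with the right image). Your organization buys a shorter proof and makes explicit a structural fact the paper only states informally in the preamble to Section \ref{s-matrix-product}, namely that homogenization of the heterogenization is literally the matrix product decomposition; the paper's approach buys a self-contained Section \ref{s-conrete-equivalence} at the cost of repeating the computation.
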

\begin{proof}
(Injectivity) Let $f,g\in C_\lambda$ and assume $\nu_\lambda(f)=\nu_\lambda(g)$,
namely
\begin{align*}
&e_sf\circ (d\circ (\pi_{(\lambda\times S,(i,t))})_{t\in S})_{i\in\lambda}
  \circ (e_t\circ \pi_{(\lambda\times S,(i,t))})_{(i,t)\in\lambda\times S}\\
=&e_sg\circ (d\circ (\pi_{(\lambda\times S,(i,t))})_{t\in S})_{i\in\lambda}
  \circ (e_t\circ \pi_{(\lambda\times S,(i,t))})_{(i,t)\in\lambda\times S}
\end{align*}
holds for all $s\in S$. Then
\begin{align*}
f
&=d\circ (e_sf)_{s\in S}\\
&=d\circ (e_sf)_{s\in S}\circ (\pi_{(\lambda,i)})_{i\in\lambda}\\
&=d\circ (e_sf)_{s\in S}\circ (d\circ (\pi_{(\lambda,i)})_{t\in S})_{i\in\lambda}\\
&=d\circ (e_sf)_{s\in S}\circ 
  (d\circ (\pi_{(\lambda\times S,(i,t))})_{t\in S})_{i\in\lambda}
  \circ (\pi_{(\lambda,i)})_{(i,t)\in\lambda\times S}\\
&=d\circ (e_sf)_{s\in S}\circ 
  (d\circ (e_t\pi_{(\lambda\times S,(i,t))})_{t\in S})_{i\in\lambda}
  \circ (\pi_{(\lambda,i)})_{(i,t)\in\lambda\times S}\\
&=d\circ (e_sf)_{s\in S}\circ 
  (d\circ (\pi_{(\lambda\times S,(i,t))})_{t\in S})_{i\in\lambda}
  \circ (e_t\pi_{(\lambda\times S,(i,t))})_{(i,t)\in\lambda}\circ (\pi_{(\lambda,i)})_{(i,t)\in\lambda\times S}\\
&=d\circ (e_sg)_{s\in S}\circ 
  (d\circ (\pi_{(\lambda\times S,(i,t))})_{t\in S})_{i\in\lambda}
  \circ (e_t\pi_{(\lambda\times S,(i,t))})_{(i,t)\in\lambda}\circ (\pi_{(\lambda,i)})_{(i,t)\in\lambda\times S}\\
&=g.
\end{align*}
Thus, $\nu_\lambda$ is injective.

(Surjectivity)
Suppose $f\in {\rm H}(C_d)$.
By definition of ${\rm H}(C_d)$, $f$ is described as
$f=(f_s/\mc\sim)_{s\in S}$ where $f_s\in C_{\lambda\times S}$
and each $f_s$ satisfies $e_sf_s=f_s$.
Let us define
\[
\tilde{f}:=d\circ (f_s)_{s\in S}
\circ(e_t\pi_{(\lambda,i)})_{(i,t)\in\lambda\times S}.
\]
Then $\nu_\lambda(\tilde{f})$ is calculated as follows:
\begin{align*}
\nu_\lambda(\tilde{f})
&= \left(e_s\tilde{f} \circ
 (d\circ (\pi_{(\lambda\times S,(i,u))})_{u\in S})_{i\in\lambda}/\sim\right)_{s\in S}\\
&=\left(e_s d\circ (f_{s'})_{s'\in S}
\circ(e_t\pi_{(\lambda,i)})_{(i,t)\in\lambda\times S}
\circ (d\circ (\pi_{(\lambda\times S,(i,u))})_{u\in S})_{i\in\lambda}/\mc\sim\right)_{s\in S}\\
&=\left(f_{s}
\circ (e_t\circ \pi_{(\lambda,i)})_{(i,t)\in\lambda\times S}
\circ (d\circ (\pi_{(\lambda\times S,(i,u))})_{u\in S})_{i\in\lambda}/\mc\sim\right)_{s\in S}\\
&=\left(f_{s}
\circ (e_td\circ (\pi_{(\lambda\times S,(i,u))})_{u\in S})_{(i,t)\in\lambda\times S}/\sim\right)_{s\in S}\\
&=\left(f_{s}
\circ (e_t\pi_{(\lambda\times S,(i,t))})_{(i,t)\in\lambda\times S}/\mc\sim\right)_{s\in S}\\
&=(f_{s}/\mc\sim)_{s\in S}\\
&=f.
\end{align*}

(Compatibility)
Let $f\in C_{\lambda_1}$, $g_i\in C_{\lambda_2}$ ($i\in \lambda_1$).
Then 
\begin{align*}
&\nu_{\lambda_2}(f\circ (g_i)_{i\in\lambda_1})\\
=&\left(e_s f\circ(g_i)_{i\in\lambda_1}
  \circ(d\circ(\pi_{(\lambda_2\times S,(j,t))})_{t\in S})_{j\in\lambda_2}/\mc\sim\right)_{s\in S}\\
=&\left(e_s f
  \circ (d\circ(e_u\circ \pi_{(\lambda_1,i)})_{u\in S})_{i\in\lambda_1}
  \circ(g_i)_{i\in\lambda_1}
  \circ(d\circ(\pi_{(\lambda_2\times S,(j,t))})_{t\in S})_{j\in\lambda_2}/\mc\sim\right)_{s\in S}\\
=&\left(e_s f
  \circ (d\circ(\pi_{(\lambda_1\times S,(i,u))})_{u\in S})_{i\in\lambda_1}
  \circ(e_u g_i)_{(i,u)\in\lambda_1\times S}
  \circ(d\circ(\pi_{(\lambda_2\times S,(j,t))})_{t\in S})_{j\in\lambda_2}/\mc\sim\right)_{s\in S}\\
=&\nu_{\lambda_1}(f)\circ (\nu_{\lambda_2}(g_i))_{i\in\lambda_1}.
\end{align*}
\end{proof}

By the isomorphism $\nu$ in this proposition, an ${\rm H}(C_d)$-algebra naturally has 
$C$-algebra structure.
The following isomorphism exists through this identification.
\begin{prop}\label{equiv-single}
Let $C$ be a $<\mc\kappa$-ary clone with a diagonal pair $(d,(e_s)_{s\in S})$.
Let $A$ be a $C$-algebra. Then $\nu_A:A\ni a\mapsto (e_s(a))_{s\in S}\in {\rm H}(C_d)$
is an isomorphism of $C$-algebras.
Furthermore, $(\nu_A)_{A\in\V(C)}$ is a natural transformation
from $\id_{\Cat(\V(C))}$ to the functor $[A\mapsto {\rm H}(A_d)]$.
\end{prop}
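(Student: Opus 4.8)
The plan is to prove that $\nu_A$ is a bijective homomorphism of $C$-algebras — where ${\rm H}(A_d)$, whose underlying set is $\prod_{s\in S}e_s(A)$, carries the $C$-structure transported along the clone isomorphism $\nu$ of Proposition \ref{clone-single}, so that $f\in C_\lambda$ acts as $\nu_\lambda(f)$ — and then to check naturality. Since a bijective homomorphism between algebras of one clone is automatically an isomorphism, establishing these two points suffices.

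The identity that drives the whole argument is
\[
d(e_s(a))_{s\in S}=a \qquad (a\in A).
\]
It follows from the diagonal pair axioms: substituting the constant family $(a)_{s\in S}$ into axiom~2, $d\circ(e_s\circ\pi_{(S,s)})_{s\in S}=d$, gives $d(e_s(a))_{s\in S}=d(a)_{s\in S}$, while axiom~3, $d\circ(\pi_{(S,0)})_{s\in S}=\pi_{(S,0)}$, says precisely that $d$ returns the common entry of a constant family, so $d(a)_{s\in S}=a$. Dually, axiom~1, $e_s\circ d=\pi_{(S,s)}$, yields $e_s(d(a_t)_{t\in S})=a_s$ for every $(a_s)_{s\in S}\in\prod_{s\in S}e_s(A)$. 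These two computations exhibit $(a_s)_{s\in S}\mapsto d(a_t)_{t\in S}$ as a two-sided inverse of $\nu_A$, so $\nu_A$ is a bijection.

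For the homomorphism property I would fix $f\in C_\lambda$ and $a_i\in A$ ($i\in\lambda$) and unwind, in turn, the formula for $\nu_\lambda(f)$ from Proposition \ref{clone-single}, the action of $C_d$ on $A_d$ by representatives, and the homogenization action of Definition \ref{homog-alg}. Evaluating the $s$-component of $\nu_\lambda(f)$ on the inputs $\nu_A(a_i)=(e_t(a_i))_{t\in S}$, the inner block $d\circ(\pi_{(\lambda\times S,(i,t))})_{t\in S}$ becomes $d(e_t(a_i))_{t\in S}$, which the key identity collapses to $a_i$. The whole expression then simplifies to $(e_s\,f(a_i)_{i\in\lambda})_{s\in S}=\nu_A(f(a_i)_{i\in\lambda})$, giving $\nu_\lambda(f)(\nu_A(a_i))_{i\in\lambda}=\nu_A(f(a_i)_{i\in\lambda})$. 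Hence $\nu_A$ is an isomorphism of $C$-algebras.

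Finally, naturality amounts to $\nu_B\circ\ph={\rm H}(\ph_d)\circ\nu_A$ for a homomorphism $\ph:A\ra B$, and this is immediate and coordinatewise: since $\ph_{d,s}=\ph$ and ${\rm H}(\ph_d)$ acts componentwise, both sides send $a$ to $(\ph(e_s(a)))_{s\in S}=(e_s(\ph(a)))_{s\in S}$, the last equality because $\ph$ commutes with the unary term $e_s$. I expect the homomorphism step to be the only real obstacle: the bookkeeping in composing the clone isomorphism $\nu_\lambda$ with the heterogenization and homogenization actions is delicate, and the identity $d(e_t(a_i))_{t\in S}=a_i$ must be inserted at exactly the right spot, whereas the bijection and naturality are routine by comparison.
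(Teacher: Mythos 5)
Your proposal is correct and follows essentially the same route as the paper: exhibit $(a_s)_{s\in S}\mapsto d(a_s)_{s\in S}$ as the two-sided inverse, verify the homomorphism property by collapsing $d(e_t(a_i))_{t\in S}$ to $a_i$ inside $\nu_\lambda(f)$, and check naturality coordinatewise via $\ph\circ e_s=e_s\circ\ph$. The only difference is that you spell out how the inverse property follows from the three diagonal-pair axioms, which the paper leaves as ``by definition of diagonal pair.''
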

\begin{proof}
By definition of diagonal pair, $(a_s)_{s\in S}\mapsto d(a_s)_{s\in S}$ is 
the inverse mapping of $\nu_A$. Thus $\nu_A$ is bijective.

We prove $\nu_A$ is a homomorphism of $C$-algebras.
Let $f\in C_\lambda$, $\lambda<\kappa$ and $(a_i)_{i\in\lambda}\in A^\lambda$.
Then
\begin{align*}
\nu_\lambda(f)(\nu_A(a_i))_{i\in\lambda}
&=(e_sf(d(e_t(a_i))_{t\in S})_{i\in\lambda})_{s\in S}\\
&=(e_sf(a_i)_{i\in\lambda})_{s\in S}\\
&=\nu_A(f(a_i)_{i\in\lambda}).
\end{align*}
This equation is nothing but the property we should prove.

Next, we prove naturalness of $(\nu_A)_{A\in\V(C)}$.
Let $A,B\in\V(C)$ and $\ph:A\ra B$ be a homomorphism.
We should the diagram
\[\xymatrix{
A\ar[r]^{\nu_A} \ar[d]_\ph& {\rm H}(A_d)\ar[d]^{{\rm H}(\ph_d)}\\
B\ar[r]^{\nu_B} & {\rm H}(B_d)
}\]
commute. For $a\in A$,
\begin{align*}
{\rm H}(\ph_d)(\nu_A(a))
&={\rm H}(\ph_d)(e_s(a))_{s\in S}\\
&=(\ph(e_s(a))_{s\in S}\\
&=(e_s(\ph(a))_{s\in S}\\
&=\nu_B(\ph(a))
\end{align*}
holds. It means $(\nu_A)_{A\in\V(C)}$ is a natural transformation.
\end{proof}

We complete to prove that
the variety corresponding to the clone ${\rm H}(C_d)$ is definitionally
equivalent to the variety corresponding to $C$ via the natural equivalence $(\nu_A)_{A\in\V}$.
Next, we prove a natural equivalence of the converse direction.

\begin{prop}\label{clone-many}
Let $M$ be an $S$-sorted $<\mc\kappa$-ary pure clone.
Let $d\in {\rm H}_S(M)$ and $e_s\in {\rm H}_1(M)$ ($s\in S$) be terms that
satisfy the following properties:
\begin{itemize}
\item
The term $d$ is defined as $d:=(\pi_{(S\times S,(s,s))})_{s\in S}$.
\item
Each $e_s$ can be written as in the form
$e_s=(e_{s,t}\pi_{(S,\id,s)})_{t\in S}$, where $e_{s,t}\in M_{s\ra t}$
and satisfies \mbox{$e_{s,s}=\pi_{(\{s\},\id,s)}$}.
\end{itemize}
Then $(d,(e_s)_{s\in S})$ is a diagonal pair of ${\rm H}(M)$ and
$({\rm H}(M))_d$ is isomorphic to $M$ via an isomorphism
\[
\mu_{(\lambda,v,s)}:M_{(\lambda,v,s)}\ni
f \mapsto (e_{s,t})_{t\in S}\circ f\circ 
(
\pi_{(\lambda\times S,p_2,(i,v(i)))})_{i\in\lambda}/\mc\sim
\in ({\rm H}(M))_{d,(\lambda,v,s)},
\]
where $\sim$ is the equivalence
\begin{align*}
&(f_s)_{s\in S}\sim (g_s)_{s\in S}\\
:\Longleftrightarrow&
  (f_s)_{s\in S}\circ (e_{v(i)}\pi_{(\lambda,i)}^{{\rm H}(M)})_{i\in\lambda}
 =(g_s)_{s\in S}\circ (e_{v(i)}\pi_{(\lambda,i)}^{{\rm H}(M)})_{i\in\lambda}\\
\Longleftrightarrow&
 \forall s\in S;
  f_s\circ (e_{v(i),t}\pi_{(\lambda\times S,p_2,(i,v(i)))})_{(i,t)\in\lambda\times S}
 =g_s\circ (e_{v(i),t}\pi_{(\lambda\times S,p_2,(i,v(i)))})_{(i,t)\in\lambda\times S},
\end{align*}
where $\pi^{{\rm H}(M)}$ denotes the projection constant of ${\rm H}(M)$.
\end{prop}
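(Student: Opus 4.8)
The plan is to handle the two claims in turn, the first by reduction to an earlier result and the second by the same four-step scheme (well-definedness, injectivity, surjectivity, compatibility) used in the proof of Proposition~\ref{clone-single}. For the diagonal-pair claim I would verify that the hypotheses of the Proposition preceding Definition~\ref{diagonal-pair} are met and then quote it directly. Indeed $d$ already has the required form $(\pi_{(S\times S,p_2,(s,s))})_{s\in S}$; since every component of $e_s$ factors through $\pi_{(S,\id,s)}$, the term $e_s$ depends only on its $s$-th argument; and since $e_{s,s}=\pi_{(\{s\},\id,s)}$ is the unary identity term, the $s$-th component of $e_s$ equals $e_{s,s}\circ\pi_{(S,\id,s)}=\pi_{(S,\id,s)}$. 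These are exactly the two conditions demanded there, so the three defining identities of a diagonal pair follow.

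For the isomorphism I would first check that $\mu_{(\lambda,v,s)}$ lands in the correct sort, i.e.\ that its representative lies in $\{g\in{\rm H}_\lambda(M)\mid e_s\circ g=g\}$. From the homogenization composition formula one computes, for any $h=(h_t)_{t\in S}\in{\rm H}_\lambda(M)$, that $e_s\circ h=(e_{s,t}\circ h_s)_{t\in S}$. Applying this to $h=\mu_{(\lambda,v,s)}(f)$, whose $s$-component is $f\circ(\pi_{(\lambda\times S,p_2,(i,v(i)))})_{i\in\lambda}$ because $e_{s,s}=\id$, reproduces $\mu_{(\lambda,v,s)}(f)$, which establishes well-definedness.

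Bijectivity I would obtain by exhibiting an explicit inverse. Using Remark~\ref{hetero-representative} I may represent any class in $({\rm H}(M))_{d,(\lambda,v,s)}$ by a tuple $h=(h_t)_{t\in S}$ with $h\circ(e_{v(i)}\pi_{(\lambda,i)})_{i\in\lambda}=h$; the inverse then sends its class to the $M$-term obtained by composing the $s$-component $h_s\in M_{(\lambda\times S,p_2,s)}$ with the tuple $(e_{v(i),t}\circ\pi_{(\lambda,v,i)})_{(i,t)\in\lambda\times S}$, which lies in $M_{(\lambda,v,s)}$. Checking that the two composites are identities amounts to expanding them and cancelling the redundant projections and idempotents, using $e_{s,s}=\id$, the idempotency of the $e_s$, and the many-sorted outer and inner identity laws; alternatively one can prove injectivity and surjectivity separately, exactly paralleling the corresponding steps of Proposition~\ref{clone-single}.

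Finally I would verify that $\mu=(\mu_{(\lambda,v,s)})$ respects the clone structure. Preservation of projections is a short computation showing $\mu_{(\lambda,v,v(i))}(\pi_{(\lambda,v,i)})$ equals the $i$-th projection of $({\rm H}(M))_d$. The composition identity $\mu(f\circ(g_i)_{i\in\lambda_1})=\mu(f)\circ(\mu(g_i))_{i\in\lambda_1}$ I expect to be the principal obstacle: as in the compatibility step of Proposition~\ref{clone-single}, it requires inserting the term $d$ and the idempotents $e_{v(i),t}$ at the junction between $f$ and the $g_i$ and then cancelling them via associativity and the diagonal-pair identities, while keeping careful track of which $\mc\sim$-class each intermediate term names and of the distinction between the index sets $\lambda_1\times S$ and $\lambda_2\times S$. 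I would therefore present that verification as a single annotated chain of equalities, each step justified by a clone axiom or by a defining identity of $(d,(e_s)_{s\in S})$.
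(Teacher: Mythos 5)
Your proposal is correct and follows essentially the same route as the paper's proof: the four-part scheme (diagonal pair, well-definedness, bijectivity, compatibility) matches, and your explicit inverse, sending a canonical representative $h=(h_t)_{t\in S}$ to $h_s\circ(e_{v(i),t}\circ\pi_{(\lambda,v,i)})_{(i,t)\in\lambda\times S}$, is exactly the paper's surjectivity witness $\tilde{f}$. The only (harmless) deviations are that you obtain the three diagonal-pair identities by verifying the hypotheses of the unnamed proposition of Section \ref{s-homogenization-of-many-sorted-variety} and citing it, where the paper recomputes them inline, and that the cancellation at the junction in the compatibility step is effected by the outer identity law together with $e_{v_1(i),v_1(i)}=\id$ rather than by inserting $d$.
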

\begin{proof}
First, we prove $(d,(e_s)_{s\in S})$ is a diagonal pair of ${\rm H}(M)$.

1. $e_s\circ d=e_s\circ \pi_{(S,s)}$ is proved as follows:
\[
e_sd
= e_s(\pi_{(S\times S,p_2,(t,t))}^M)_{t\in S}
=e_s(\pi_{(S\times S,p_2,(s,t))}^M)_{t\in S}
= e_s\circ \pi_{(S,s)}^{{\rm H}(M)}.
\]
Here and through the proof of this proposition,
$\pi^M$ and $\pi^{{\rm H}(M)}$ denote projection constants
of $M$ and ${\rm H}(M)$ respectively.

2. $d\circ(e_s\circ\pi_{(S,s)})=d$ is proved as follows:
\begin{align*}
d(e_s\circ \pi_{(S,s)}^{{\rm H}(M)})_{s\in S}
&= (\pi_{(S\times S,p_2,(s,s))})_{s\in S}
  \circ (e_{s,t}\pi_{(S,\id,s)}^M\circ
       (\pi_{(S\times S,p_2,(s,u))})_{u\in S})_{(s,t)\in S\times S}\\
&=(e_{s,s}\pi_{(S,\id,s)}^M\circ (\pi_{(S\times S,p_2,(s,u))}^M)_{u\in S})_{s\in S} \\
&=(\pi_{(S\times S,p_2,(s,s))}^M)_{s\in S}\\
&=d.
\end{align*}

3. $d\circ(\pi_{(S,0)})_{s\in S}=\pi_{(S,0)}$ is proved as follows:
\begin{align*}
d\circ (\pi_{(S,0)}^{{\rm H}(M)})_{s\in S}
&=(\pi_{(S\times S,p_2,(s,s))}^M)_{s\in S}
  \circ (\pi_{(S\times S,p_2,(0,t))}^M)_{(s,t)\in S\times S}\\
&=(\pi_{(S\times S,p_2,(0,s))}^M)_{s\in S}\\
&=\pi_{(S,0)}^{{\rm H}(M)}.
\end{align*}
We complete the proof that $(d,(e_s)_{s\in S})$ is a diagonal pair of ${\rm H}(M)$.

(Injectivity of $\mu_{(\lambda,v,s)}$)
Let $f,g\in M_{(\lambda,v,s)}$ and assume $\mu_{(\lambda,v,s)}(f)=\mu_{(\lambda,v,s)}(g)$,
namely,
\begin{align*}
&e_{s,t}f\circ(\pi_{(\lambda\times S,p_2,(i,v(i))}^M))_{i\in\lambda}
 \circ (e_{v(i),u}\pi_{(\lambda\times S,p_2,(i,v(i))})_{(i,u)\in\lambda\times S}\\
=&e_{s,t}g\circ(\pi_{(\lambda\times S,p_2,(i,v(i))}^M))_{i\in\lambda}
 \circ (e_{v(i),u}\pi_{(\lambda\times S,p_2,(i,v(i))})_{(i,u)\in\lambda\times S}
\end{align*}
hold for all $t\in S$.
Notice that the following two equations hold: The first equation is
\begin{align*}
 &\pi_{(\lambda,v,v(i))}^M\\
=&e_{v(i),v(i)}\circ \pi_{(\lambda,v,v(i))}^M \\
=&\pi_{(\lambda\times S,p_2,(i,v(i)))}^M
 \circ (e_{v(j),u}\circ \pi_{(\lambda,v,v(j))}^M)_{(j,u)\in\lambda\times S}
\end{align*}
The second equation is
\[
\id_s:=\pi_{(\{s\},\id_{\{s\}},s)}^M=e_{s,s}\pi_{(S,\id_S,s)}^M (e_{s,t})_{t\in S}.
\]
By these equations injectivity of $\mu_{(\lambda,v,s)}$ is proved as follows:
\begin{align*}
f
&=\id_s\circ f\circ (\pi_{(\lambda,v,v(i))}^M)_{i\in\lambda}\\
&=e_{s,s}\pi_{(S,\id,s)}^M (e_{s,t})_{t\in S}\circ f
 (\pi_{(\lambda\times S,p_2,(i,v(i)))}^M)_{i\in\lambda}
 \circ (e_{v(j),u}\circ \pi_{(\lambda,v,v(j))}^M)_{(j,u)\in \lambda\times S}\\
&=e_{s,s}\pi_{(S,\id,s)}^M (e_{s,t})_{t\in S}\circ g
 (\pi_{(\lambda\times S,p_2,(i,v(i)))}^M)_{i\in\lambda}
 \circ (e_{v(j),u}\circ \pi_{(\lambda,v,v(j))}^M)_{(j,u)\in \lambda\times S}\\
&=g.
\end{align*}
(Surjectivity of $\mu_{(\lambda,v,s)}$)
Let $f/\mc\sim\in ({\rm H}(M))_{d,(\lambda,v,s)}$. Define $\tilde{f}$ by
$\tilde{f}=\pi_{(S,\id,s)}^M\circ f\circ
(e_{v(i),u}\circ\pi_{(\lambda,v,i)})_{(i,u)\in \lambda\times S}$.
Then the following formula holds.
\begin{align*}
\mu_{(\lambda,v,s)}(\tilde{f})
&=\left((e_{s,t})_{t\in S}\circ \tilde{f}\circ
 ( \pi_{(\lambda\times S,p_2,(i,v(i)))}^M)_{i\in\lambda}\right)/\mc\sim\\
&=\left((e_{s,t})_{t\in S}\circ \pi_{(S,\id,s)}^M\circ f\circ
 (e_{v(i),u}\circ \pi_{(\lambda,v,i)}^M)_{(i,u)\in \lambda\times S}\circ
 (\pi_{(\lambda\times S,p_2,(i,v(i)))}^M)_{i\in\lambda}\right)/\mc\sim\\
&=\left(e_s\circ f\circ
 (e_{v(i),u}\circ \pi_{(\lambda,v,i)}^M)_{(i,u)\in \lambda\times S}\circ
 (\pi_{(\lambda\times S,p_2,(i,v(i)))}^M)_{i\in\lambda}\right)/\mc\sim\\
&=\left(f\circ
 (e_{v(i),u}\circ \pi_{(\lambda\times S,p_2,(i,v(i)))}^M)_{(i,u)\in\lambda\times S}\right)/\mc\sim\\
&= f/\mc\sim.
\end{align*}
(Compatibility of $\mu$)
Let $f\in M_{(\lambda_1,v_1,s)}$ and
$g_i\in M_{(\lambda_2,v_2,v_1(i))}$ for $i\in\lambda_1$.
Then
\begin{align*}
&\mu_{(\lambda_2,v_2,s)}(f\circ (g_i)_{i\in\lambda_1})\\
=&\left((e_{s,t})_{t\in S}\circ f\circ (g_i)_{i\in\lambda_1}
 \circ (\pi_{(\lambda_2\times S,p_2,(j,v_2(j))}^M)_{j\in\lambda_2}\right)/\mc\sim\\
=&\left((e_{s,t})_{t\in S}\circ f\circ (e_{v_1(i),v_1(i)}g_i)_{i\in\lambda_1}
 \circ (\pi_{(\lambda_2\times S,p_2,(j,v_2(j))}^M)_{j\in\lambda_2}\right)/\mc\sim\\
=&\left((e_{s,t})_{t\in S}\circ f
 \circ (\pi_{(\lambda_1\times S,p_2,(i,v_1(i))}^M)_{i\in\lambda_1}
 \circ (e_{v_1(i),u})_{(i,u)\in\lambda_1\times S}\circ (g_i)_{i\in\lambda_1}
 \circ (\pi_{(\lambda_2\times S,p_2,(j,v_2(j))}^M)_{j\in\lambda_2}\right)/\mc\sim\\
=&\mu_{(\lambda_1,v_1,s)}(f)\circ (\mu_{(\lambda_2,v_2,v_1(i))}(g_i))_{i\in\lambda_1}.
\end{align*}
We complete the proof of the proposition.
\end{proof}

By this proposition, an $({\rm H}(M))_d$-algebra is identified with an
$M$-algebra. Under this identification, the following holds.
\begin{prop}\label{equiv-many}
Let $M,d,e_s,e_{s,t}$ be as in the previous proposition,
$A=(A_s)_{s\in S}$ be an $M$-algebra.
Then $({\rm H}(A))_d$ is isomorphic to $A$ via an isomorphism $\mu_A$, where
\[
\mu_{A,s}:A_s\ni a\mapsto (e_{s,t}(a))_{t\in S}\in ({\rm H}(A))_{d,s}.
\]
Moreover, $(\mu_A)_{A\in\V(M)}$ is a natural transformation
$\id_{\Cat(\V(M))}$ to $[A\mapsto ({\rm H}(A))_d]$.
\end{prop}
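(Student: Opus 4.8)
The plan is to verify, in turn, that each component $\mu_{A,s}$ is a bijection, that the family $\mu_A=(\mu_{A,s})_{s\in S}$ is a homomorphism of $M$-algebras (where $({\rm H}(A))_d$ is regarded as an $M$-algebra through the clone isomorphism $\mu$ of Proposition \ref{clone-many}, i.e.\ the action of $f\in M_{(\lambda,v,s)}$ is that of $\mu_{(\lambda,v,s)}(f)$), and that the resulting family is natural in $A$. Since a componentwise bijective homomorphism of many-sorted algebras automatically has a homomorphic inverse, these three facts together give that each $\mu_A$ is an isomorphism and that $(\mu_A)_{A\in\V(M)}$ is a natural isomorphism, which is exactly the assertion.

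For bijectivity, first observe that $\mu_{A,s}(a)=(e_{s,t}(a))_{t\in S}$ really lands in $({\rm H}(A))_{d,s}=e_s({\rm H}(A))$: the operation $e_s$ acts on ${\rm H}(A)$ by $(c_t)_{t\in S}\mapsto(e_{s,t}(c_s))_{t\in S}$, so applying $e_s$ to $\mu_{A,s}(a)$ reads off its $s$-component $e_{s,s}(a)=a$ (using $e_{s,s}=\pi_{(\{s\},\id,s)}$) and then reapplies $(e_{s,t})_{t\in S}$, returning the same tuple; hence $\mu_{A,s}(a)$ is $e_s$-fixed. The same identity $e_{s,s}=\id$ shows that the inverse of $\mu_{A,s}$ is the projection $(c_t)_{t\in S}\mapsto c_s$: one composite recovers $a$ from the $s$-component of $\mu_{A,s}(a)$, and the other recovers an $e_s$-fixed tuple $c$ from $c_s$, since $c=e_s(c)=(e_{s,t}(c_s))_{t\in S}=\mu_{A,s}(c_s)$. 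Thus each $\mu_{A,s}$ is a bijection.

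The homomorphism property is the main computation, and the only real subtlety is choosing the representative. For $f\in M_{(\lambda,v,s)}$ I would represent $\mu_{(\lambda,v,s)}(f)\in({\rm H}(M))_{d,(\lambda,v,s)}$ by the element $g=(g_t)_{t\in S}\in{\rm H}_\lambda(M)$ with $g_t=e_{s,t}\circ f\circ(\pi_{(\lambda\times S,p_2,(i,v(i)))})_{i\in\lambda}$, exactly the representative appearing in Proposition \ref{clone-many}. Because the inputs $\mu_{A,v(i)}(a_i)=(e_{v(i),t}(a_i))_{t\in S}$ are genuine elements of $e_{v(i)}({\rm H}(A))$, the heterogenization action is representative-independent and equals the homogenization action of $g$; unwinding Definition \ref{homog-alg} gives
\[
\mu_{(\lambda,v,s)}(f)\bigl(\mu_{A,v(i)}(a_i)\bigr)_{i\in\lambda}
=\bigl(g_t\,(e_{v(i),u}(a_i))_{(i,u)\in\lambda\times S}\bigr)_{t\in S}.
\]
Evaluating $g_t$ selects, for each $i$, the $(i,v(i))$-entry $e_{v(i),v(i)}(a_i)=a_i$ (again $e_{s,s}=\id$), so the inner argument collapses to $f(a_i)_{i\in\lambda}$ and the displayed tuple becomes $(e_{s,t}(f(a_i)_{i}))_{t\in S}=\mu_{A,s}(f(a_i)_{i\in\lambda})$. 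This is precisely the homomorphism identity; the heart of the matter is the bookkeeping of the product index $\lambda\times S$ together with the collapsing identity $e_{s,s}=\id$, and everything else is definition-chasing.

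Finally, naturality amounts to commutativity of the square with horizontal maps $\mu_A,\mu_B$ and vertical maps $\ph,({\rm H}(\ph))_d$. Tracing $a\in A_s$ one way yields $\mu_{B,s}(\ph_s(a))=(e_{s,t}(\ph_s(a)))_{t\in S}$, while tracing it the other way yields $({\rm H}(\ph))_{d}(e_{s,t}(a))_{t\in S}=(\ph_t(e_{s,t}(a)))_{t\in S}$, using that $({\rm H}(\ph))_d$ is the restriction of ${\rm H}(\ph)$ (Proposition \ref{single-many}). These agree because $\ph$ is a homomorphism of $M$-algebras and hence commutes with the unary term $e_{s,t}$, i.e.\ $\ph_t(e_{s,t}(a))=e_{s,t}(\ph_s(a))$ for every $t$. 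As this holds for all $s,t$, the square commutes and $(\mu_A)_{A\in\V(M)}$ is a natural transformation, completing the plan.
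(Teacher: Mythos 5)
Your proof is correct and follows essentially the same route as the paper: the same representative $(e_{s,t}\circ f\circ(\pi_{(\lambda\times S,p_2,(i,v(i)))})_{i\in\lambda})_{t\in S}$, the same collapse via $e_{v(i),v(i)}=\id$ in the homomorphism computation, and the same naturality square using that $\ph$ commutes with the unary terms $e_{s,t}$. The only difference is that you spell out the bijectivity of each $\mu_{A,s}$ (with inverse the $s$-th projection), which the paper leaves implicit; that is a welcome bit of extra rigor, not a different argument.
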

\begin{proof}
Let $f\in M_{(\lambda,v,s)}$ and $a_i\in A_{v(i)}$ for $i\in\lambda$.
Then
\begin{align*}
\mu_{(\lambda,v,s)}(f)(\mu_{A,v(i)}(a_i))_{i\in\lambda}
&=(e_{s,t})_{t\in S}\circ f\circ (\pi_{(\lambda\times S,p_2,(i,v(i)))})_{i\in\lambda}
(e_{v(i),u}(a_i))_{(i,u)\in\lambda\times S}\\
&=(e_{s,t})_{t\in S}\circ f(e_{v(i),v(i)}(a_i))_{i\in\lambda}\\
&=(e_{s,t})_{t\in S}\circ f(a_i)_{i\in\lambda}\\
&=\mu_{A,s}(f(a_i)_{i\in\lambda}).
\end{align*}
This equation is nothing but the proposition stated.

We prove $(\mu_A)_{A\in\V(M)}$ is a natural transformation.
Let $A=(A_s)_{s\in S},B=(B_s)_{s\in S}\in \V(M)$ and $\ph=(\ph_s)_{s\in S}$
be a homomorphism $A\ra B$.
We prove should the diagram
\[\xymatrix{
A\ar[r]^{\mu_A} \ar[d]_\ph& ({\rm H}(A))_d\ar[d]^{({\rm H}(\ph))_d}\\
B\ar[r]^{\mu_B} & ({\rm H}(B))_d
}\]
commute. For $s\in S$ and $a\in A_s$, the equation
\begin{align*}
\mu_{B,s}(\ph_s(a))
&=(e_{s,t}(\ph_s(a))_{t\in S}\\
&=(\ph_t(e_{s,t}(a)))_{t\in S}\\
&={\rm H}(\ph)(e_{s,t}(a))_{t\in S}\\
&=({\rm H}(\ph))_{d,s}(\mu_{A,s}(a))
\end{align*}
holds.
This is nothing but $\mu_B\circ \ph=({\rm H}(\ph))_d\circ \mu_A$, thus
it is proved that $(\mu_A)_{A\in\V(M)}$ is a natural transformation.
\end{proof}

By Proposition \ref{clone-single} and \ref{clone-many},
there is natural correspondence between many-sorted pure clones
and single-sorted clones with diagonal operations
(that can be extended to diagonal pair).
Proposition \ref{many-single},\ref{single-many},\ref{equiv-single}
and \ref{equiv-many}, many-sorted varieties are essentially equivalent to single-sorted varieties that have diagonal pairs.
We state these facts explicitly.

\begin{thm}
Let $\kappa$ be infinite cardinal and $S$ be a cardinal that $0<S<\kappa$.
Then $(C,d)\mapsto C_d$ is one to one correspondence between
(isomorphism classes of) single-sorted $<\mc\kappa$-ary clones with $S$-ary diagonal operations
and (isomorphism classes of) $S$-sorted $<\mc\kappa$-ary pure clones.
The converse correspondence is given by
$M\mapsto ({\rm H}(M),(\pi_{(S\times S,p_2,(s,s))})_{s\in S})$.
\end{thm}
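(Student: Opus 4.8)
The plan is to treat this theorem as the packaging of the clone isomorphisms of Propositions \ref{clone-single} and \ref{clone-many}, the only genuinely new point being that the two constructions match up the distinguished diagonal operations. First I would make the statement precise: an object on the single-sorted side is a pair $(C,d)$ in which $d$ extends to a diagonal pair $(d,(e_s)_{s\in S})$ of $C$ (Definition \ref{diagonal-pair}), two such being identified when some clone isomorphism $C_1\ra C_2$ carries $d_1$ to $d_2$; an object on the many-sorted side is a pure clone (Definition \ref{pure-clone}) up to isomorphism. I would then check that both assignments descend to isomorphism classes. For the forward map I must verify that $C_d$ is pure — the sort $s_1\ra s_2$ of $C_{(d,(e_s))}$ contains the class of $e_{s_2}$, hence is non-empty — and that its isomorphism type does not depend on the auxiliary family $(e_s)_{s\in S}$, which is exactly the content of the isomorphism $C_{(d,(e_s))}\iso C_{(d,(e'_s))}$ established in Section \ref{s-heterogenization}. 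For the converse map I must verify that $\tilde d:=(\pi_{(S\times S,p_2,(s,s))})_{s\in S}$ is really an $S$-ary diagonal operation of ${\rm H}(M)$: purity of $M$ lets me pick $e_{s,t}\in M_{s\ra t}$ with $e_{s,s}$ the identity, and Proposition \ref{clone-many} then shows $(\tilde d,(e_s)_{s\in S})$ is a diagonal pair of ${\rm H}(M)$.

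With well-definedness in place, the bijection reduces to two round trips. Starting from a pure clone $M$, the converse map yields $({\rm H}(M),\tilde d)$ and the forward map then yields $({\rm H}(M))_{\tilde d}$; Proposition \ref{clone-many} supplies the isomorphism $\mu$ of this clone with $M$, so this composite is the identity on isomorphism classes of pure clones. Starting from $(C,d)$, the forward map yields the pure clone $C_d$ and the converse map yields $({\rm H}(C_d),\tilde d')$, where $\tilde d'$ is the canonical diagonal of ${\rm H}(C_d)$; Proposition \ref{clone-single} supplies a clone isomorphism $\nu\colon C\ra {\rm H}(C_d)$. All that is missing from this second round trip is that $\nu$ is an isomorphism of pairs, i.e.\ that $\nu_S(d)=\tilde d'$.

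This last point is the step I expect to be the main obstacle, although it is a short computation rather than a conceptual difficulty. I would substitute $f=d$ into the defining formula for $\nu_S$ and apply the diagonal-pair relation governing $e_s\circ d$ twice: the $s$-component $e_s d\circ(d\circ(\pi_{(S\times S,(i,t))})_{t\in S})_{i\in S}/\mc\sim$ first collapses, via the projection $\pi_{(S,s)}$, to $e_s d\circ(\pi_{(S\times S,(s,t))})_{t\in S}/\mc\sim$ and then to the class of $e_s\circ\pi_{(S\times S,(s,s))}$. By the definition of the projection constants of a heterogenization (Definition \ref{heterogenization-clone}), this class is exactly $\pi_{(S\times S,p_2,(s,s))}$, which is the $s$-component of $\tilde d'$; hence $\nu_S(d)=\tilde d'$. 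The only care needed here is to keep track that the representatives produced are already fixed by the idempotent substitution $(e_t\pi_{(S\times S,(i,t))})_{(i,t)}$ defining $\mc\sim$, so that equality of classes is equality of these representatives, and no identity beyond the diagonal-pair axioms and $e_s\circ e_s=e_s$ is used. Finally I would note that both assignments respect isomorphisms — a clone isomorphism of pairs induces one of heterogenizations, and a clone isomorphism of pure clones induces one of homogenizations carrying $\tilde d$ to $\tilde d$ by the componentwise definition of ${\rm H}$ — so that the two composites being isomorphic to the identity upgrades to a genuine one-to-one correspondence of isomorphism classes; the supporting functoriality and naturality are already recorded in Propositions \ref{many-single}, \ref{single-many}, \ref{equiv-single} and \ref{equiv-many}.
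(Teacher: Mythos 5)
Your proposal is correct and follows the same route the paper intends: the theorem is stated there without a separate proof precisely because it is the packaging of the clone isomorphisms $\nu$ of Proposition \ref{clone-single} and $\mu$ of Proposition \ref{clone-many}, together with the independence of $C_{(d,(e_s)_{s\in S})}$ from the choice of $(e_s)_{s\in S}$. The one detail you add that the paper leaves implicit --- the computation that $\nu_S(d)$ equals $(e_s\circ\pi_{(S\times S,(s,s))}/\mc\sim)_{s\in S}$, i.e.\ the canonical diagonal $(\pi_{(S\times S,p_2,(s,s))})_{s\in S}$ of ${\rm H}(C_d)$ --- is checked correctly and is exactly what is needed for the correspondence to respect the distinguished diagonal operations.
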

Here, a $<\mc\kappa$-ary clone with an $S$-ary diagonal operation means
a tuple $(C,d)$ that $C$ is a $<\mc\kappa$-ary clone, $d\in C_S$ and
there exists $(e_s)_{s\in S}$ such that $(d,(e_s)_{s\in S})$ is
a diagonal pair of $C$.
Clones with diagonal operations $(C,d),(C',d')$ are said to be isomorphic
if there is an clone isomorphism $\ph:C\ra C'$ such that $\ph(d)=d'$.

\begin{thm}\label{main-theorem-categorical-equivalence}
Let $\kappa$ be infinite cardinal, $S$ be a cardinal that $0<S<\kappa$,
$M$ be an $S$-sorted $<\mc\kappa$-ary clone.
Then there exists a categorical equivalence $\nu:\V(M)\ra \V({\rm H}(M))$
such that the following diagram commute.
\[\xymatrix{
\V(M)\ar[r]^\nu \ar[d]_F& \V({\rm H}(M))\ar[d]^G\\
\Set^S \ar[r]^\ph & {\cal D}_S
}\]
The vertical arrow $F$ is the forgetful functors attaining the underlying family of sets. $G$ is the forgetful functor determined by the clone
homomorphism $d\in C_{{\cal D}_S}$ maps to $(\pi_{(S\times S,p_2,(s,s))}^M)_{s\in S}$.
The lower horizontal arrow $\ph$ is the functor described in 
Proposition \ref{pureset-and-sets}
\end{thm}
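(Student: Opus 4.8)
The plan is to let $\nu$ be the homogenization functor ${\rm H}$ of Proposition \ref{many-single} and to build a quasi-inverse from the heterogenization construction together with the clone isomorphism $({\rm H}(M))_d\iso M$ of Proposition \ref{clone-many}; almost all of the analytic content already sits in the preceding propositions, so the task is to splice them and then check that the square commutes. Note first that for $F$ to land in $\Set^S$ every $M$-algebra must have a pure underlying family, which by the discussion opening Section \ref{s-homogenization-of-many-sorted-variety} forces $M$ to be pure; I use this hypothesis throughout.

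I would begin by checking that $G$ is well defined. Because $M$ is pure, the pair $(d,(e_s)_{s\in S})$ with $d=(\pi_{(S\times S,p_2,(s,s))})_{s\in S}$ is a diagonal pair of ${\rm H}(M)$ (the proposition preceding Definition \ref{diagonal-pair}, re-derived inside the proof of Proposition \ref{clone-many}), so by the Remark following Definition \ref{diagonal-pair} the term $d$ satisfies the diagonal identity in ${\rm H}(M)$. Hence the generator of $C_{{\cal D}_S}$ may be sent to $d$, defining a clone homomorphism $C_{{\cal D}_S}\ra{\rm H}(M)$, and $G$ is the induced forgetful functor sending an ${\rm H}(M)$-algebra to its diagonal reduct along $d$.

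For the equivalence I set $\nu:={\rm H}$ and let $\nu^{-1}$ be the composite of the heterogenization functor $[B\mapsto B_d]\colon\V({\rm H}(M))\ra\V(({\rm H}(M))_d)$ of Proposition \ref{single-many} with the isomorphism of categories $\V(({\rm H}(M))_d)\iso\V(M)$ induced by $\mu\colon({\rm H}(M))_d\iso M$ of Proposition \ref{clone-many}. Proposition \ref{equiv-many} then supplies the natural isomorphism $\id_{\V(M)}\iso\nu^{-1}\nu$ via the family $(\mu_A)_A$ (recognising $({\rm H}(A))_d\iso A$), and Proposition \ref{equiv-single}, applied to $C={\rm H}(M)$ with the diagonal pair above, supplies $\id_{\V({\rm H}(M))}\iso\nu\nu^{-1}$ via $(\nu_B)_B$ (recognising ${\rm H}(B_d)\iso B$, after the clone identification ${\rm H}(({\rm H}(M))_d)\iso{\rm H}(M)$ of Proposition \ref{clone-single}). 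These two natural isomorphisms exhibit $\nu$ as a categorical equivalence.

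It remains to verify $G\circ\nu=\ph\circ F$, which I expect to hold on the nose. On an object $A=(A_s)_{s\in S}$ both composites carry underlying set $\prod_{s\in S}A_s$, and on a morphism $(f_s)_{s\in S}$ both act coordinatewise as $(x_s)_{s\in S}\mapsto(f_s(x_s))_{s\in S}$, so the only content is the $d$-operation. Unwinding Definition \ref{homog-alg} for $d=(\pi_{(S\times S,p_2,(s,s))})_{s\in S}$ gives the map $((a_{s,t})_{t\in S})_{s\in S}\mapsto(a_{s,s})_{s\in S}$ on ${\rm H}(A)$, which is precisely the diagonal operation $\ph$ places on $\ph(F(A))$ in Proposition \ref{pureset-and-sets}; thus $G({\rm H}(A))=\ph(F(A))$. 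The step I expect to be the real obstacle is the bookkeeping of identifications rather than any single computation: one must keep the three nested index ranges ($\lambda$, the sort index $S$, and the second copy of $S$ introduced by homogenization) consistent throughout, and must confirm that the isomorphism $({\rm H}(M))_d\iso M$ used to define $\nu^{-1}$ is exactly the one under which Propositions \ref{equiv-single} and \ref{equiv-many} are stated, so that the two displayed natural isomorphisms are genuinely mutually inverse.
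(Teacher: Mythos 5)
Your proposal is correct and follows exactly the route the paper intends: the theorem is stated without an explicit proof as a summary of Propositions \ref{many-single}, \ref{single-many}, \ref{clone-single}, \ref{clone-many}, \ref{equiv-single} and \ref{equiv-many}, and your splicing of these (with $\nu={\rm H}$, the quasi-inverse via heterogenization and $\mu$, and the direct check of the square) is precisely that assembly. Your observation that $M$ must be assumed pure for $F$ to land in $\Set^S$ is a legitimate and correct repair of a hypothesis the theorem statement leaves implicit.
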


\section{Applications} \label{s-applications}

By the correspondence between many-sorted algebras and single-sorted algebras,
many results on single-sorted algebras are generalized to many-sorted algebras.
At the end of this paper, we show few such examples.

\subsection{Variety theorem} \label{s-variety-theorem}

The first example is a many-sorted version of Birkhoff's characterization of varieties.
(See also \cite{ARV}. Another version of many-sorted variety theorem.)
We start from compatibility of homogenization and generating operators
homomorphic images $\mathbf{H}$, subalgebras $\mathbf{S}$ and direct products $\mathbf{P}$
of variety.
\begin{df}
Let $A=(A_s)_{s\in S}$ be an $S$-sorted algebra.
\begin{enumerate}
\item
A subalgebra of $A$ is an $S$-sorted family $B=(B_s)_{s\in S}$ that satisfies
the following conditions:
\begin{itemize}
\item
$B_s\subset A_s$ for all $s\in S$.
\item
For $(\lambda,v)$-ary $s$-valued term operation $f$ of $A$ and
$(b_i)_{i\in\lambda}\in \prod_{i\in\lambda}B_{v(i)}$,
$f(b_i)_{i\in \lambda}\in B_s$ holds.
\end{itemize}
The set of all subalgebras of $A$ is denoted by $\Sub(A)$.
\item
A congruence of $A$ is a subalgebra $(\theta_s)_{s\in S}$ of $A^2=(A_s{}^2)_{s\in S}$ such that
$\theta_s$ is an equivalence relation on $A_s$ for each $s\in S$.
The set of all congruences of $A$ is denoted by $\Con(A)$.
\end{enumerate}
\end{df}

\begin{prop}\label{sub-con-corresopndence}
Let $M$ be an $S$-sorted clone, $A$ be an $M$-algebra.
\begin{enumerate}
\item
If $B=(B_s)_{s\in S}\in \Sub(A)$, then ${\rm H}(B):=\prod_{s\in S}B_s\in \Sub({\rm H}(A))$.
Conversely, Any subalgebra of ${\rm H}(A)$ is described as in the form
${\rm H}(B)$ by $B\in\Sub(A)$.
\item
If $\theta=(\theta_s)_{s\in S}\in\Con(A)$, then
\[
{\rm H}(\theta):=\prod_{s\in S}\theta_s
:=\{((a_s)_{s\in S},(b_s)_{s\in S})\in ({\rm H}(A))^2\mid
 \forall s\in S;(a_s,b_s)\in\theta_s\} \in\Con({\rm H}(A)).
\]
Conversely, Any congruence of ${\rm H}(A)$ is described as in the form
${\rm H}(\theta)$ by a congruence $\theta\in\Con(A)$.
\item
If $\theta\in \Con(A)$, then ${\rm H}(A/\theta)\iso {\rm H}(A)/{\rm H}(\theta)$.
\item
If $(A_i)_{i\in I}$ is a family of $M$-algebras, then
\[
\prod_{i\in I}{\rm H}(A_i)\iso {\rm H}\left(\prod_{i\in I} A_i\right).
\]
\end{enumerate}
\end{prop}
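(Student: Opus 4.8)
The plan is to prove the four parts in the order (4), (1), (2), (3), treating (2) as a consequence of (1) and (4). Throughout I use the explicit description of ${\rm H}(A)$: its underlying set is $\prod_{s\in S}A_s$, and a term $f=(f_s)_{s\in S}\in {\rm H}_\lambda(M)$ acts componentwise by $((a_{i,t})_{t\in S})_{i\in\lambda}\mapsto (f_s(a_{i,t})_{(i,t)\in\lambda\times S})_{s\in S}$, where each $f_s\in M_{(\lambda\times S,p_2,s)}$ is a genuine term operation of $A$.

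Parts (4) and (3) are rearrangement isomorphisms that I would dispatch first. Since direct products of $M$-algebras are formed sortwise, ${\rm H}(\prod_i A_i)=\prod_s\prod_i (A_i)_s$ whereas $\prod_i {\rm H}(A_i)=\prod_i\prod_s (A_i)_s$; the canonical bijection exchanging the two products intertwines the ${\rm H}(M)$-actions because the action is componentwise in both indices, giving (4). Likewise ${\rm H}(A/\theta)=\prod_s(A_s/\theta_s)$ and ${\rm H}(A)/{\rm H}(\theta)=(\prod_s A_s)/(\prod_s\theta_s)$, and the natural map $([a_s]_{\theta_s})_{s}\mapsto [(a_s)_{s}]_{{\rm H}(\theta)}$ is a well-defined isomorphism; here I would only have to check well-definedness and the homomorphism identity, both immediate from componentwise evaluation.

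The substance of the proposition is the converse half of (1). The forward half is immediate: for $B\in\Sub(A)$ and $f\in{\rm H}_\lambda(M)$, the $s$-component of $f$ evaluated on tuples from $\prod_s B_s$ is $f_s$ applied to entries lying in the appropriate $B_t$, hence lies in $B_s$ by closure of $B$, so $\prod_s B_s$ is closed. For the converse, let $C\subset {\rm H}(A)$ be a subalgebra and put $B_s:=p_s(C)$, the image of $C$ under the $s$-th coordinate projection. The inclusion $C\subset\prod_s B_s$ is trivial, and the reverse inclusion is the crux: it is forced by the diagonal term $d:=(\pi_{(S\times S,p_2,(s,s))})_{s\in S}\in {\rm H}_S(M)$, which lies in ${\rm H}(M)$ for every $M$ since it is assembled from projections. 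Indeed, given $(b_s)_{s}\in\prod_s B_s$, I choose $c^{(s)}\in C$ whose $s$-th component is $b_s$; then $d(c^{(s)})_{s\in S}$ has $s$-th component equal to $b_s$ for each $s$, so it equals $(b_s)_s$, which therefore lies in $C$ by closure under $d$. This ``rectangularity'' of subalgebras is exactly what fails for arbitrary subsets and is the heart of the argument. Once $C=\prod_s B_s$ is established, I verify $B\in\Sub(A)$ by realizing each $M$-term value inside $C$: given $g\in M_{(\lambda,v,s)}$ and $b_i\in B_{v(i)}$, I assemble $F\in{\rm H}_\lambda(M)$ whose $s$-component is $g\circ(\pi_{(\lambda\times S,p_2,(i,v(i)))})_{i\in\lambda}$ and choose $c^{(i)}\in C$ with $c^{(i)}_{v(i)}=b_i$; then $g(b_i)_{i\in\lambda}$ appears as the $s$-component of $F(c^{(i)})_{i\in\lambda}\in C=\prod_s B_s$, forcing $g(b_i)_{i\in\lambda}\in B_s$.

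Finally I reduce (2) to (1) and (4). By (4) applied to two factors, ${\rm H}(A)^2\iso {\rm H}(A^2)$, so a congruence of ${\rm H}(A)$ --- a subalgebra of ${\rm H}(A)^2$ that is an equivalence relation --- corresponds under (1) applied to $A^2$ to a family $(\psi_s)_{s\in S}\in\Sub(A^2)$ for which $\prod_s\psi_s$ is an equivalence relation. Since a product relation is reflexive, symmetric, or transitive precisely when every factor is, each $\psi_s$ is then an equivalence relation on $A_s$; being also jointly a subalgebra of $A^2$, the family $(\psi_s)_s$ is exactly a congruence $\theta\in\Con(A)$, and we obtain the bijection $\theta\leftrightarrow {\rm H}(\theta)$. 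The one point needing bookkeeping is the identification of the product equivalence relation $\prod_s\theta_s$ with the subalgebra of ${\rm H}(A^2)$ under the coordinate-exchange isomorphism of (4). I expect the main obstacle to be precisely the converse of (1): making the diagonal/rectangularity step rigorous and constructing the witnessing term $F$; the remaining parts are naturality and rearrangement computations. A minor caveat worth flagging is that the auxiliary components of $F$ in sorts other than $s$ require the corresponding $M_{(\lambda\times S,p_2,s')}$ to be nonempty, which holds automatically for $\lambda\ge 1$ via projections.
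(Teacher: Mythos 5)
Your proposal is correct and matches the paper's own argument: the paper likewise proves only the converse half of (1), defining $B_s$ as the sortwise projection of the given subalgebra and using the diagonal term $d=(\pi_{(S\times S,p_2,(s,s))})_{s\in S}$ to force the ``rectangularity'' $C=\prod_{s\in S}B_s$, while dismissing the remaining parts as easily verified or proved similarly. Your extra details (the witnessing term $F$ showing each $B_s$ is closed, and the reduction of (2) to (1) and (4)) only flesh out what the paper omits.
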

\begin{proof}
We only show that any subalgebra of ${\rm H}(A)$ is in the form ${\rm H}(B)$ by
a subalgebra $B$ of $A$.
Other parts are easily verified or proved similarly.

If ${\rm H}(A)= \emptyset$, then the assertion is trivial.
Thus, we assume $A_s\neq\emptyset$ for all $s\in S$.
Let $B'$ be a subalgebra of ${\rm H}(A)$. Define
\[
B_s:=\{b\in A_s\mid\exists (a_t)_{t\in S\sm\{s\}}\in\prod_{t\in S\sm\{s\}}A_t; (b,(a_t)_{t\in S\sm\{s\}})\in B'\}.
\]
Clearly, $B'\subset \prod_{s\in S}B_s$ holds. We show $\prod_{s\in S}B_s\subset B'$.
Let $(b_s)_{s\in S}\in \prod_{s\in S}B_s$. By definition of $B_s$,
there exist $(c_{st})_{t\in S}\in B'$ such that $c_{ss}=b_s$.
Let $d=(\pi_{(S\times S,p_2,(s,s))})_{s\in S}$ be the diagonal term. Then
\[
B'\ni d((c_{st})_{t\in s})_{s\in S}=(c_{ss})_{s\in S}=(b_s)_{s\in s}.
\qedhere
\]
\end{proof}

\begin{rem}\label{subalgebra-of-pure-algebra}
If $M$ is a pure clone, then the underlying family of sets of any subalgebra of an $M$-algebra $A$ is pure. In such case, the correspondence $B\mapsto {\rm H}(B)$
between $\Sub(A)$ and $\Sub({\rm H}(A))$ is bijective. On the other hand,
this correspondence may not be bijective if $M$ is not pure.
For a simple example, $S=2=\{0,1\}$, $M$ be the trivial $2$-sorted clone
(that is, the $2$-sorted clone only consists of projection constants)
and $A=(A_i)_{i\in 2}=(A_0,A_1)$, then the both $(\emptyset,A_1)$ and $(A_0,\emptyset)$
of members of $\Sub(A)$ correspond to $\emptyset=\emptyset\times A_1=A_0\times\emptyset$.
\end{rem}

\begin{thm}\label{variety-theorem}
Let $M$ be an $S$-sorted pure clone, $\mathcal{K}$ be a class of $M$-algebras.
Then the variety $\V(\mathcal{K})$ generated by $\mathcal{K}$ coincides with
$\mathbf{HSP}(\mathcal{K})$.
\end{thm}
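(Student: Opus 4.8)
The plan is to deduce the many-sorted theorem from the classical single-sorted HSP theorem (in its $<\kappa$-ary form) by transporting everything across the homogenization functor. Since $M$ is pure, ${\rm H}(M)$ carries the diagonal pair $(d,(e_s)_{s\in S})$ with $d=(\pi_{(S\times S,p_2,(s,s))})_{s\in S}$, and by Theorem \ref{main-theorem-categorical-equivalence} the object assignment $A\mapsto {\rm H}(A)$ extends to a categorical equivalence ${\rm H}:\V(M)\to\V({\rm H}(M))$; in particular ${\rm H}$ reflects isomorphisms, so ${\rm H}(A)\iso {\rm H}(A')$ forces $A\iso A'$. The backbone of the argument is that ${\rm H}$ intertwines the three class operators. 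Indeed Proposition \ref{sub-con-corresopndence} gives, up to isomorphism, ${\rm H}(\mathbf{S}\mathcal{K})=\mathbf{S}\,{\rm H}(\mathcal{K})$ (part 1 together with Remark \ref{subalgebra-of-pure-algebra}), ${\rm H}(\mathbf{H}\mathcal{K})=\mathbf{H}\,{\rm H}(\mathcal{K})$ (parts 2 and 3), and ${\rm H}(\mathbf{P}\mathcal{K})=\mathbf{P}\,{\rm H}(\mathcal{K})$ (part 4). Composing these yields, up to isomorphism, ${\rm H}(\mathbf{HSP}\,\mathcal{K})=\mathbf{HSP}\,{\rm H}(\mathcal{K})$.

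For the easy inclusion $\mathbf{HSP}(\mathcal{K})\subset\V(\mathcal{K})$ I would argue as in the single-sorted case: $\V(\mathcal{K})$ is by definition the least variety containing $\mathcal{K}$ (it exists because an intersection of varieties is cut out by the union of the defining equational theories), and any variety is closed under $\mathbf{H}$, $\mathbf{S}$ and $\mathbf{P}$, since a many-sorted identity is preserved under passage to subalgebras, to direct products (coordinatewise), and to surjective homomorphic images. These preservation facts are routine and checked sortwise. Chaining $\mathcal{K}\subset\mathbf{P}\mathcal{K}\subset\mathbf{SP}\mathcal{K}\subset\mathbf{HSP}\mathcal{K}$ then gives $\mathbf{HSP}(\mathcal{K})\subset\V(\mathcal{K})$.

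The reverse inclusion is the substantive half. By the single-sorted HSP theorem applied to the $<\kappa$-ary clone ${\rm H}(M)$, the class $\mathbf{HSP}\,{\rm H}(\mathcal{K})$ is a single-sorted variety, say $\mathbf{HSP}\,{\rm H}(\mathcal{K})=\{B\in\V({\rm H}(M))\mid B\models\Sigma\}$ for a set $\Sigma$ of single-sorted identities of ${\rm H}(M)$. Using the intertwining identity above together with the fact that ${\rm H}$ reflects isomorphisms and that $\mathbf{HSP}(\mathcal{K})$ is closed under isomorphism, one checks that $\mathbf{HSP}(\mathcal{K})$ is exactly the preimage $\{A\in\V(M)\mid {\rm H}(A)\models\Sigma\}$: if $A\in\mathbf{HSP}(\mathcal{K})$ then ${\rm H}(A)\in\mathbf{HSP}\,{\rm H}(\mathcal{K})$, and conversely ${\rm H}(A)\models\Sigma$ gives ${\rm H}(A)\iso{\rm H}(A')$ for some $A'\in\mathbf{HSP}(\mathcal{K})$, whence $A\iso A'$ and $A\in\mathbf{HSP}(\mathcal{K})$. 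It then remains to see that this preimage is itself a many-sorted variety; granting this, $\mathbf{HSP}(\mathcal{K})$ is a variety containing $\mathcal{K}$, hence contains the least such variety $\V(\mathcal{K})$, which closes the proof.

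The step I expect to be the main obstacle is precisely this last one: translating a single-sorted ${\rm H}(M)$-identity into genuine many-sorted $M$-identities. Here I would use the explicit sortwise description of the action in Definition \ref{homog-alg}. A pair of $\lambda$-ary ${\rm H}(M)$-terms is a pair $((f_s)_{s\in S},(g_s)_{s\in S})$ with $f_s,g_s\in M_{(\lambda\times S,p_2,s)}$, and since $A$ is pure the inputs $((a_{i,t})_{t\in S})_{i\in\lambda}$ to ${\rm H}(A)$ range over the full product $\prod_{(i,t)\in\lambda\times S}A_t$. Consequently ${\rm H}(A)\models (f_s)_{s\in S}=(g_s)_{s\in S}$ holds if and only if $A\models f_s=g_s$ for every $s\in S$, so one single-sorted identity is equivalent, on homogenizations, to an $S$-indexed family of many-sorted identities of sorts $(\lambda\times S,p_2,s)$. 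Translating every member of $\Sigma$ in this way produces a many-sorted equational theory $\Sigma'$ with $\{A\mid {\rm H}(A)\models\Sigma\}=\{A\mid A\models\Sigma'\}$, exhibiting $\mathbf{HSP}(\mathcal{K})$ as a variety and completing the argument.
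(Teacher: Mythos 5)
Your proposal is correct and follows essentially the same route as the paper: both reduce to the single-sorted Birkhoff theorem via homogenization, use Proposition \ref{sub-con-corresopndence} to commute ${\rm H}$ with the operators $\mathbf{H}$, $\mathbf{S}$, $\mathbf{P}$, and rest on the same key observation that ${\rm H}(A)\models (f_s)_{s\in S}=(g_s)_{s\in S}$ if and only if $A\models f_s=g_s$ for all $s$, which is exactly how the paper establishes ${\rm H}(\V(\K))=\V({\rm H}(\K))$. The only difference is organizational: the paper writes a single chain of equalities ${\rm H}(\V(\K))=\V({\rm H}(\K))=\mathbf{HSP}({\rm H}(\K))={\rm H}(\mathbf{HSP}(\K))$ and then invokes the equivalence of Theorem \ref{main-theorem-categorical-equivalence}, whereas you split the claim into two inclusions and argue the hard one by pulling back the defining equational theory.
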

\begin{proof}
We show
\begin{equation}\label{varieties-through-homogenization}
{\rm H}(\V(\K))=\V({\rm H}(\K))=\mathbf{HSP}({\rm H}(\K))={\rm H}(\mathbf{HSP}(\K)).
\end{equation}
The second equality is Birkhoff's variety theorem for classes of single-sorted algebras.
The last equality follows from Proposition \ref{sub-con-corresopndence}.

The first equality is proved as follows. 
For $f,g\in {\rm H}_\lambda(M)$, where $f=(f_s)_{s\in S}$, $g=(g_s)_{s\in S}$,
the following equivalence holds:
\begin{align*}
{\rm H}(\V(\K))\models f=g
\Longleftrightarrow& \forall s\in S;\ \V(\K)\models f_s=g_s\\
\Longleftrightarrow& \forall s\in S;\ \K\models f_s=g_s\\
\Longleftrightarrow& {\rm H}(\K)\models f=g\\
\Longleftrightarrow& \V({\rm H}(\K))\models f=g.
\end{align*}
It means the equational theories defining $\V({\rm H}(\K))$ and ${\rm H}(\V(\K))$
coincide with each other.
Therefore, ${\rm H}(\V(\K))=\V({\rm H}(\K))$ holds.
By (\ref{varieties-through-homogenization}) and Theorem \ref{main-theorem-categorical-equivalence},
$\V(\K)=\mathbf{HSP}(\K)$ holds.
\end{proof}

\begin{rem}
As noted in preliminary section, the above clone-based
formulation includes type-based formulation
as the case that $M$ is a free clone.
Thus, Theorem \ref{variety-theorem} above is essentially equivalent
to usual formulation of variety theorem.
\end{rem}

\subsection{Relational clone and categorical equivalence}
\label{s-relational-clone-and-categorical-equivalence}

Denecke and L\"{u}ders proved in \cite{DL} that two finite single-sorted algebras are
categorically equivalent if and only if their relational clones are isomorphic.
In this subsection, we introduce the notion of a relational clone of a many-sorted algebra, and we show that a relational clone of an algebra
such that the clone of its term operations is pure is isomorphic to the relational
clone of its homogenization.
As a consequence of this fact, we obtain a characterization of categorical equivalence
by isomorphism relation of corresponding relational clones for many-sorted algebras.

\begin{df}
Let $A$ be an $S$-sorted $<\mc\kappa$-ary algebra, $\kappa'$ be a cardinal.
A $<\mc\kappa'$-ary relational clone of $A$ is the family $\Inv_{<\mc\kappa'}(A)=(\Inv_\mu(A))_{\mu<\kappa'}$,
where $\Inv_\mu(A)=\{{\rm H}(B)\mid B\in\Sub(A)\}$,
namely, $\Inv_\mu(A)$ is the set of all subsets $r$ of
$\left(\prod_{s\in S}A_s\right)^\mu$ that satisfy
\begin{align*}
&f_s\in \Clo_{(\lambda\times S,p_2,s)}(A),(a_{i,j,t})_{(t,j)\in S\times\mu}\in r\ (\text{for }i\in\lambda)\\
\Longrightarrow& (f_s(a_{i,j,t})_{(i,t)\in\lambda\times S})_{(s,j)\in S\times\mu}\in r.
\end{align*}
Here, $\Clo_{(\lambda,v,s)}(A)$ is the set of all $(\lambda,v)$-ary $s$-valued term operations of $A$.
A member of $\Inv_\mu(A)$ is said an $\mu$-ary invariant set of $A$.
\end{df}

\begin{rem}
We can consider the set of all closed subsets for component-wise operations.
Here, a set
$r\subset \prod_{j\in\mu}A_{v(j)}$ ($\mu$ is a cardinal, $v:\mu\ra S$) is said
closed if the following condition holds:
For $\mu<\kappa'$ and $v:\mu\ra S$
\begin{align*}
&f_j\in \Clo_{(\lambda,[i\mapsto v(j)],v(j))}(A),(a_{i,j})_{j\in\mu}\in r\ (\text{for }i\in\lambda)\\
\Longrightarrow& (f_j(a_{i,j})_{i\in\lambda})_{j\in\mu}\in r.
\end{align*}
However, this condition considers only on operations related at most one sort.
Thus, it should be said a concept of the non-indexed product
of an one-sorted algebras, rather than a concept of a many-sorted algebra.
\end{rem}

Next, we define the structure of relational clone and define the notion of isomorphism.
As the single-sorted case, invariant sets are closed under primitive-positive definitions. It is easily verified as the same way as the single-sorted case.
\begin{prop}
Let $A$ be an $S$-sorted $<\mc\kappa$-ary algebra and $r_k\in \Inv_{\mu_k}(A)$ for $k\in K$.
Assume a relation $r\subset \left(\prod_{s\in S}A_s\right)^{\mu}$ is defined by the following form
\[
r=\left\{(a_{s,j'})_{(s,j')\in S\times\mu}\in \left(\prod_{s\in S}A_s\right)^{\mu}\mid 
  \exists (a_{s,\tilde{j}})_{(s,\tilde{j})\in S\times \nu}\in \left(\prod_{s\in S}A_s\right)^\nu;
  \bigwedge_{u\in U} (a_{s,f(u,j)})_{(s,j)\in \mu_{g(u)}}\in r_{g(u)}\right\},
\]
where $g:U\ra K$ and $f:\bigcup_{u\in U}\{u\}\times \mu_{g(u)}\ra \mu\amalg \nu$.
Then $r\in \Inv_\mu(A)$ holds.
Here, $\mu\amalg\nu$ denotes the disjoint union of $\mu$ and $\nu$.
\end{prop}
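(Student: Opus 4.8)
The plan is to verify directly that the primitive-positively defined relation $r$ satisfies the closure condition characterizing membership in $\Inv_\mu(A)$; that is, that $r$ is closed under every homogenized operation $(f_s)_{s\in S}$, with $f_s\in\Clo_{(\lambda\times S,p_2,s)}(A)$, acting coordinatewise on $\mu$-tuples. The essential structural observation, which I would record first, is that such an operation acts independently on each of the $\mu$ output-coordinates: the value placed in coordinate $(s,j)$ depends only on the $j$-th coordinates of the $\lambda$ inputs. Consequently, applying the operation and then selecting a sub-family of coordinates along a map into $\mu\amalg\nu$ yields the same result as first selecting those coordinates and then applying the operation. This commutation is the technical fact on which everything rests.

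Next I would take $\lambda$ elements $\beta_i\in r$ ($i\in\lambda$) and, using the defining formula of $r$, fix for each $i$ a witness family $(a_{i,s,\tilde j})_{(s,\tilde j)\in S\times\nu}$ for the existential quantifier, so that the extended tuple $(a_{i,s,j'})_{(s,j')\in S\times(\mu\amalg\nu)}$ satisfies every conjunct $(a_{i,s,f(u,j)})_{(s,j)\in S\times\mu_{g(u)}}\in r_{g(u)}$ for $u\in U$. I would then form the candidate output together with its witness by applying the operation coordinatewise across $i$ over the whole index set $\mu\amalg\nu$: set $c_{s,j'}:=f_s(a_{i,t,j'})_{(i,t)\in\lambda\times S}$ for each $(s,j')\in S\times(\mu\amalg\nu)$. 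By construction the restriction of $(c_{s,j'})$ to $S\times\mu$ is exactly the coordinatewise image of the $\beta_i$, i.e.\ the tuple whose membership in $r$ we must establish, while $(c_{s,\tilde j})_{\tilde j\in\nu}$ serves as the proposed existential witness.

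The heart of the verification is then to check that each conjunct holds for $(c_{s,j'})$. Fixing $u\in U$, I would invoke the hypothesis $r_{g(u)}\in\Inv_{\mu_{g(u)}}(A)$: the $\lambda$ tuples $(a_{i,s,f(u,j)})_{(s,j)\in S\times\mu_{g(u)}}$ all lie in $r_{g(u)}$, so their coordinatewise image under $(f_s)_{s\in S}$ lies in $r_{g(u)}$ as well. By the commutation observation of the first step, that image is precisely $(c_{s,f(u,j)})_{(s,j)\in S\times\mu_{g(u)}}$, since $c_{s,f(u,j)}=f_s(a_{i,t,f(u,j)})_{(i,t)\in\lambda\times S}$. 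Hence every conjunct is satisfied by the extended tuple $(c_{s,j'})$, so its restriction to $S\times\mu$ belongs to $r$, which is exactly the coordinatewise image we wanted; this proves $r\in\Inv_\mu(A)$.

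I expect the only real difficulty to be notational rather than conceptual: keeping the three index ranges $\lambda$ (inputs of the operation), $S$ (sorts), and $\mu\amalg\nu$ (free and bound relational variables) disentangled, and making sure that the substitution map $f$ genuinely commutes with the coordinatewise action of the operation. Once that commutation is stated cleanly, the remaining steps are the standard proof that relations invariant under a set of operations are closed under primitive-positive definitions, transported to the homogenized setting, and require only routine bookkeeping.
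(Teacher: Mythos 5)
Your argument is correct and is exactly the standard single-sorted proof that invariant relations are closed under primitive-positive definitions, transported coordinatewise to the homogenized setting; this is precisely what the paper intends, since it omits the proof entirely with the remark that the claim ``is easily verified as the same way as the single-sorted case.'' The key commutation observation (that the coordinatewise action of $(f_s)_{s\in S}$ on the index set $\mu\amalg\nu$ commutes with selecting coordinates along the substitution map $f$) and the witness bookkeeping are both handled correctly.
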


\begin{df}
Let $\kappa'$, $S_A$ and $S_B$ be cardinals, $A$ and $B$ be $S_A$-sorted, $S_B$-sorted algebra respectively.
A $<\mc\kappa'$-ary relational clone homomorphism $\Inv_{<\mc\kappa'}(A)\ra \Inv_{<\mc\kappa'}(B)$
is a family $(\ph_\mu)$ of mappings that $\ph_\mu:\Inv_\mu(A)\ra \Inv_\mu(B)$ and
preserves primitive-positive definitions, namely,
if $r_k\in \Inv_{\mu_k}(A)$ and
\[
r=\left\{(a_{s,j'})_{(s,j')\in S\times\mu}\in \left(\prod_{s\in S}A_s\right)^\mu\mid 
  \exists (a_{s,\tilde{j}})_{(s,\tilde{j})\in S\times\nu}\in\left(\prod_{s\in S}A_s\right)^\nu; 
  \bigwedge_{u\in U} (a_{s,f(u,j)})_{(s,j)\in \mu_{g(u)}}\in r_{g(u)}\right\},
\]
then
\begin{multline*}
\ph_\mu(r)=\Bigg\{(b_{s,j'})_{(s,j')\in S\times\mu}\in
 \left(\prod_{s\in S}B_s\right)^\mu\mid \\
  \exists (b_{s,\tilde{j}})_{(s,\tilde{j})\in S\times\nu}\left(\prod_{s\in S}B_s\right)^\nu; 
  \bigwedge_{u\in U} (b_{s,f(u,j)})_{(s,j)\in \mu_{g(u)}}\in \ph_{\mu_{g(u)}}(r_{g(u)})\Bigg\}.
\end{multline*}
\end{df}

The next is the main theorem about relational clones of many-sorted algebras.
However, the proof is extremely easy;
it directly follows from the definition of homogenization and relational clone.
\begin{thm}\label{many-single-relationa-clone}
Let $M$ be an $S$-sorted $<\mc\kappa$-ary pure clone,
$A$ be an $M$-algebra, and $\kappa'$ be an arbitrary cardinal.
Then the $<\mc\kappa'$-ary relational clone $\Inv_{<\kappa'}(A)$ is isomorphic to
$\Inv_{<\kappa'}({\rm H}(A))$.
\end{thm}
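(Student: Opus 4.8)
The plan is to prove the stronger statement that the two relational clones are literally equal as families of subsets of a common set, so that the claimed isomorphism may be taken to be the identity. First I would make the underlying sets coincide. For each cardinal $\mu<\kappa'$, the $\mu$-ary invariants of the $S$-sorted algebra $A$ are subsets of $\left(\prod_{s\in S}A_s\right)^\mu$, while those of the single-sorted algebra ${\rm H}(A)$ are subsets of ${\rm H}(A)^\mu$. Since ${\rm H}(A)=\prod_{s\in S}A_s$ by Definition \ref{homog-alg}, the reindexing $\prod_{(s,j)\in S\times\mu}A_s=\prod_{(j,s)\in\mu\times S}A_s$ identifies these ambient sets canonically: a $\mu$-tuple $(a_{s,j})_{(s,j)\in S\times\mu}$ over the sorts of $A$ is the same datum as a $\mu$-tuple $(a_j)_{j\in\mu}$ of elements $a_j=(a_{s,j})_{s\in S}$ of the single sort ${\rm H}(A)$.

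Next I would check that membership is governed by the same closure condition on both sides. By definition $\Inv_\mu({\rm H}(A))$ is the set of ${\rm H}(M)$-subalgebras of ${\rm H}(A)^\mu$, i.e. the subsets closed under every ${\rm H}(M)$-term operation applied coordinate-wise over $\mu$. By Definitions \ref{homog-clone} and \ref{homog-alg}, a $\lambda$-ary ${\rm H}(M)$-term is exactly a family $(f_s)_{s\in S}$ with $f_s\in M_{(\lambda\times S,p_2,s)}$, and its action sends $((a_{i,t})_{t\in S})_{i\in\lambda}$ to $(f_s(a_{i,t})_{(i,t)\in\lambda\times S})_{s\in S}$. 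This is precisely the operation appearing in the defining closure condition of $\Inv_\mu(A)$, in which $f_s$ ranges over $\Clo_{(\lambda\times S,p_2,s)}(A)$ --- the $s$-components of homogenized terms --- applied separately for each coordinate $j\in\mu$. Since in the clone-based formalism the term operations of ${\rm H}(A)$ are exactly the actions of elements of ${\rm H}(M)$, the two closure conditions cut out the same subsets, giving $\Inv_\mu(A)=\Inv_\mu({\rm H}(A))$. I would therefore set $\ph_\mu=\id$ for every $\mu<\kappa'$.

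Finally I would verify that $\ph=(\ph_\mu)_{\mu<\kappa'}$ is a relational-clone isomorphism, i.e. that it preserves primitive-positive definitions. Under the identification of the first step the many-sorted pp-formula building a relation from relations $r_k$ and the single-sorted pp-formula building it from the same $r_k$ are syntactically identical, because both use $\prod_{s\in S}A_s$ as the universe over which the bound and free variables range; hence the identity family trivially preserves pp-definitions and is the desired isomorphism.

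I expect no genuine obstacle: the entire content is the index bookkeeping that matches the homogenized term operations of Definition \ref{homog-clone} to the operations used in the many-sorted closure condition, together with the reindexing of the product sets. It is worth remarking that purity of $M$ is not actually needed for the closure-based descriptions to coincide; it is invoked only to guarantee, through Proposition \ref{sub-con-corresopndence} and Remark \ref{subalgebra-of-pure-algebra}, that the alternative description of $\Inv_\mu(A)$ as the homogenizations of subalgebras of $A^\mu$ agrees with the closure description, which is why the hypothesis appears in the statement.
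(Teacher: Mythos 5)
Your proof is correct and essentially identical to the paper's: the paper's isomorphism is exactly your reindexing map $\Inv_\mu(A)\ni r\mapsto\{((a_{j,s})_{s\in S})_{j\in\mu}\mid (a_{j,s})_{(j,s)\in\mu\times S}\in r\}$, justified by the observation that the closure condition defining $\Inv_\mu(A)$ is literally closure under the term operations of ${\rm H}(A)$ acting coordinatewise. The only divergence is your closing remark on purity: the paper attributes bijectivity of the correspondence to pureness of $M$ via Remark \ref{subalgebra-of-pure-algebra}, whereas you observe (defensibly, given that $\Inv_\mu(A)$ is defined by the closure condition on subsets of $\left(\prod_{s\in S}A_s\right)^\mu$) that the identification is a bijection regardless, pureness being what guarantees that these invariant sets faithfully encode the $S$-sorted subalgebras of the powers of $A$.
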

\begin{proof}
By the definition of homogenization and relational clone,
\[
\Inv_\mu(A)\ni r\mapsto 
 \{((a_{j,s})_{s\in S})_{j\in\mu}\in ({\rm H}(A))^\mu\mid(a_{j,s})_{(j,s)\in\mu\times S}\in r\} \in \Inv_{\mu}({\rm H}(A))
\]
is an isomorphism of relational clone.
Note that the bijectivity of this correspondence follows from pureness of $M$
(Remark \ref{subalgebra-of-pure-algebra}).

\end{proof}

Using the result on single-sorted algebras, we obtain a characterization
of categorical equivalence of many-sorted algebras.
\begin{cor}
Let $S_k$ be a non-zero cardinal, $\kappa$ be an infinite cardinal that $S_k<\kappa$ for $k=1,2$.
Let $A_k$ be an $S_k$-sorted $<\mc\kappa$-ary algebra that the clones of term operations are pure for $k=1,2$.
Let $\kappa'$ be an infinite cardinal that satisfies
\[
\lambda<\kappa\ \Longrightarrow |A_1|^\lambda, |A_2|^\lambda <\kappa',
\]
where $|A_k|$ is the product cardinal $\prod_{s\in S_k} |A_{k,s}|$.
Then the following assertions are equivalent.
\begin{enumerate}
\item
There exists a categorical equivalence $\V(A_1)\ra\V(A_2)$ that maps $A_1$ to $A_2$.
\item
The relational clones $\Inv_{<\mc\kappa'}(A_1)$ is isomorphic to $\Inv_{<\mc\kappa'}(A_2)$.
\end{enumerate}
\end{cor}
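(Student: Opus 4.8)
**The plan is to reduce this corollary to the known single-sorted theorem of Denecke and Lüders by passing through the homogenizations $\mathrm{H}(A_1)$ and $\mathrm{H}(A_2)$.** The corollary is an equivalence of two statements about the many-sorted algebras $A_1,A_2$; my strategy is to translate each statement into the corresponding statement about the single-sorted algebras $\mathrm{H}(A_1),\mathrm{H}(A_2)$, invoke the single-sorted result there, and translate back. The two bridges I will use are Theorem \ref{main-theorem-categorical-equivalence} (categorical equivalence $\V(A_k)\simeq\V(\mathrm{H}(A_k))$ compatible with underlying-set functors) and Theorem \ref{many-single-relationa-clone} (the relational clone $\Inv_{<\mc\kappa'}(A_k)$ is isomorphic to $\Inv_{<\mc\kappa'}(\mathrm{H}(A_k))$, because the clones of term operations are pure).

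First I would spell out the single-sorted input. Since each clone of term operations is pure, $\mathrm{H}(A_k)$ is a genuine single-sorted $<\mc\kappa$-ary algebra whose underlying set has cardinality $|A_k|=\prod_{s\in S_k}|A_{k,s}|$. The hypothesis $\lambda<\kappa\Rightarrow |A_1|^\lambda,|A_2|^\lambda<\kappa'$ is exactly the arity bound required so that $\Inv_{<\mc\kappa'}(\mathrm{H}(A_k))$ separates the term operations and the Denecke–Lüders characterization applies: for single-sorted algebras of this size, there exists a categorical equivalence $\V(\mathrm{H}(A_1))\ra\V(\mathrm{H}(A_2))$ sending $\mathrm{H}(A_1)$ to $\mathrm{H}(A_2)$ if and only if $\Inv_{<\mc\kappa'}(\mathrm{H}(A_1))\iso\Inv_{<\mc\kappa'}(\mathrm{H}(A_2))$. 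This is the one external result I assume.

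Next I would prove the two directions by transport. For (2)$\Rightarrow$(1): assuming $\Inv_{<\mc\kappa'}(A_1)\iso\Inv_{<\mc\kappa'}(A_2)$, compose with the isomorphisms of Theorem \ref{many-single-relationa-clone} to obtain $\Inv_{<\mc\kappa'}(\mathrm{H}(A_1))\iso\Inv_{<\mc\kappa'}(\mathrm{H}(A_2))$, apply Denecke–Lüders to get a categorical equivalence $E:\V(\mathrm{H}(A_1))\ra\V(\mathrm{H}(A_2))$ with $E(\mathrm{H}(A_1))=\mathrm{H}(A_2)$, and then conjugate by the equivalences $\nu_k:\V(A_k)\ra\V(\mathrm{H}(A_k))$ of Theorem \ref{main-theorem-categorical-equivalence} to produce $\nu_2^{-1}\circ E\circ\nu_1:\V(A_1)\ra\V(A_2)$. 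Because $\nu_k$ sends $A_k$ to $\mathrm{H}(A_k)$ (up to the natural isomorphisms $\nu_{A_k}$ of Proposition \ref{equiv-single}), this composite maps $A_1$ to $A_2$. For (1)$\Rightarrow$(2) I run the same diagram in reverse: an equivalence $\V(A_1)\ra\V(A_2)$ sending $A_1\mapsto A_2$ conjugates to an equivalence $\V(\mathrm{H}(A_1))\ra\V(\mathrm{H}(A_2))$ sending $\mathrm{H}(A_1)\mapsto\mathrm{H}(A_2)$, whence Denecke–Lüders yields $\Inv_{<\mc\kappa'}(\mathrm{H}(A_1))\iso\Inv_{<\mc\kappa'}(\mathrm{H}(A_2))$, and Theorem \ref{many-single-relationa-clone} transports this back to $\Inv_{<\mc\kappa'}(A_1)\iso\Inv_{<\mc\kappa'}(A_2)$.

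**The main obstacle is verifying that the conjugated equivalence really sends the distinguished object to the distinguished object and that the arity bound $\kappa'$ is the correct one for the Denecke–Lüders theorem at the homogenized size $|A_k|$.** The object-tracking is not entirely formal because the equivalences $\nu_k$ identify $A_k$ with $\mathrm{H}(A_k)$ only up to the natural isomorphism $\nu_{A_k}$, so I must check that Denecke–Lüders only requires preservation of the object up to isomorphism — which it does, since categorical equivalences are closed under natural isomorphism and isomorphic objects have equal relational clones. The cardinal bookkeeping is where I would be most careful: I need that a $<\mc\kappa$-ary term operation of $\mathrm{H}(A_k)$ depends on at most $\lambda<\kappa$ arguments, so that the relevant invariant relations live in arity $|A_k|^\lambda<\kappa'$, guaranteeing that $\Inv_{<\mc\kappa'}(\mathrm{H}(A_k))$ is large enough to recover the clone. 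Once that bound is confirmed the rest is a diagram chase, and the proof is short.
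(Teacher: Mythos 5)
Your proof is correct and follows essentially the same route as the paper: the paper likewise deduces the corollary directly from Theorem \ref{main-theorem-categorical-equivalence}, Theorem \ref{many-single-relationa-clone}, and the single-sorted relational-clone characterization. The only caveat is that the single-sorted input you call ``Denecke--L\"uders'' is, in the generality needed here (infinitary, possibly infinite algebras), the theorem from \cite{Izainf} stated immediately after the corollary; Denecke and L\"uders proved only the finite case.
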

\begin{proof}
This corollary follows from 
Theorem \ref{main-theorem-categorical-equivalence},
Theorem \ref{many-single-relationa-clone} and 
the following theorem.
\end{proof}

\begin{thm}[\cite{Izainf}]
Let $\kappa$ be an infinite cardinal and
$A_k$ be a single-sorted $<\mc\kappa$-ary algebra for $k=1,2$.
Let $\kappa'$ be an infinite cardinal that satisfies
$\lambda<\kappa\ \Longrightarrow |A_1|^\lambda,|A_2|^\lambda<\kappa'$.
Then the following assertions are equivalent.
\begin{enumerate}
\item
There exists a categorical equivalence $\V(A_1)\ra\V(A_2)$ that maps $A_1$ to $A_2$.
\item
The relational clones $\Inv_{<\mc\kappa'}(A_1)$ is isomorphic to $\Inv_{<\mc\kappa'}(A_2)$.
\end{enumerate}
\end{thm}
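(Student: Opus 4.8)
The plan is to route everything through the Galois connection $\Pol$--$\Inv$ between operations and invariant relations, exactly as in the finite prototype of Denecke and L\"{u}ders \cite{DL}, but keeping careful track of arities so that the cardinality hypothesis on $\kappa'$ supplies the local-closure property that is automatic in the finite case. For the implication $(1)\Rightarrow(2)$ I would first observe that a categorical equivalence $\Phi:\Cat(\V(A_1))\ra\Cat(\V(A_2))$ preserves every categorical construction out of which invariant relations and primitive-positive definitions are built. A $\mu$-ary invariant set of $A_k$ is, by definition, a subalgebra of the power $A_k{}^\mu$, i.e.\ a (regular) subobject of the $\mu$-fold product. Since $\Phi$ preserves products and $\Phi(A_1)\iso A_2$, it carries $A_1{}^\mu$ to $A_2{}^\mu$ and subobjects to subobjects, so $r\mapsto\Phi(r)$ is a bijection $\phi_\mu:\Inv_\mu(A_1)\ra\Inv_\mu(A_2)$. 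Because primitive-positive definability is assembled entirely from products, diagonals, intersections and projections, all of which an equivalence preserves, the family $(\phi_\mu)_{\mu<\kappa'}$ preserves primitive-positive definitions and is therefore a relational-clone isomorphism.

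For the converse $(2)\Rightarrow(1)$ the strategy is to recover the term operations from the relations. The decisive object is, for each $\lambda<\kappa$, the subalgebra $\Gamma_\lambda\subset A_k{}^{(A_k{}^\lambda)}$ generated by the $\lambda$ projection tuples: it is the graph of the $\lambda$-ary part $\Clo_\lambda(A_k)$ of the clone, in the sense that a $\lambda$-ary operation $g$ lies in $\Clo_\lambda(A_k)$ if and only if the tuple $(g(\vec a))_{\vec a\in A_k{}^\lambda}$ belongs to $\Gamma_\lambda$. Its arity is $|A_k|^\lambda$, and the hypothesis $|A_k|^\lambda<\kappa'$ is exactly what guarantees $\Gamma_\lambda\in\Inv_{<\mc\kappa'}(A_k)$; this is the infinitary form of the closure relation $\Pol_{<\mc\kappa}\Inv_{<\mc\kappa'}(A_k)=\Clo_{<\mc\kappa}(A_k)$. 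Granting it, a relational-clone isomorphism $\Psi:\Inv_{<\mc\kappa'}(A_1)\ra\Inv_{<\mc\kappa'}(A_2)$ carries the $\Gamma_\lambda$ of $A_1$ to the $\Gamma_\lambda$ of $A_2$ and, respecting primitive-positive definitions, transports composition and projections, hence induces an isomorphism of the term clones in the weak, base-set-changing sense; equivalently it exhibits $A_2$ as term-equivalent to an idempotent reduct of a matrix power of $A_1$. By the infinitary analogue of McKenzie's characterization of categorical equivalence, such a weak clone isomorphism extends to a categorical equivalence $\V(A_1)\ra\V(A_2)$, and since $\Psi$ preserves the whole relational clone it matches the relations defining $A_1$ with those defining $A_2$, so the equivalence sends $A_1$ to $A_2$.

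The main obstacle is precisely the infinitary Galois step $\Pol_{<\mc\kappa}\Inv_{<\mc\kappa'}(A_k)=\Clo_{<\mc\kappa}(A_k)$. Over an infinite base set this fails without a local-closure argument, and the entire purpose of the bound on $\kappa'$ is to keep the separating relations $\Gamma_\lambda$ inside the $<\mc\kappa'$-ary relational clone: one must verify that every $<\mc\kappa$-ary operation preserving all $<\mc\kappa'$-ary invariants already arises as a term, and the only delicate point is the cardinality estimate keeping the arity $|A_k|^\lambda$ of $\Gamma_\lambda$ below $\kappa'$. Once this local-closedness and the cardinal bookkeeping are in place, the remaining verifications --- preservation of primitive-positive definitions under the equivalence in $(1)\Rightarrow(2)$, and the check that the reconstructed functor carries $A_1$ to $A_2$ --- are routine.
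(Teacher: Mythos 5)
This theorem is not proved in the paper at all: it is imported from the reference \cite{Izainf}, which is listed as ``in preparation'', and the present paper only uses it as a black box to deduce the corollary for many-sorted algebras. So there is no proof in the text to compare your proposal against; I can only assess the proposal on its own terms.

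Your outline follows the expected route, namely the infinitary extension of Denecke and L\"{u}ders \cite{DL}, and the direction $(1)\Rightarrow(2)$ is essentially right: invariant relations are subalgebras of powers, an equivalence preserves products, subobjects and regular images, and primitive-positive definability is built from exactly these, so the induced maps $\phi_\mu$ form a relational clone isomorphism. The converse, however, is where all the substance lies, and your sketch defers it to two unproved ``infinitary analogues'': the local-closure identity $\Pol_{<\mc\kappa}\Inv_{<\mc\kappa'}(A_k)=\Clo_{<\mc\kappa}(A_k)$ and McKenzie's characterization of categorical equivalence via invertible idempotents in matrix powers. Both are genuinely delicate over infinite base sets and are presumably the actual content of \cite{Izainf}; asserting that they hold is not a proof. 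There is also a concrete misstep: you claim the relational clone isomorphism $\Psi$ ``carries the $\Gamma_\lambda$ of $A_1$ to the $\Gamma_\lambda$ of $A_2$'', but $\Gamma_\lambda(A_1)$ has arity $|A_1|^\lambda$ while $\Gamma_\lambda(A_2)$ has arity $|A_2|^\lambda$, and a relational clone homomorphism is arity-preserving, so $\Psi(\Gamma_\lambda(A_1))$ lives in $\Inv_{|A_1|^\lambda}(A_2)$ and cannot literally equal $\Gamma_\lambda(A_2)$ when $|A_1|\neq|A_2|$. The recovery of the clone of $A_2$ from $\Psi$ has to be organized differently (as in \cite{DL}, through the graph of a single operation and its pp-interdefinability data), and that reconstruction, together with the verification that the resulting functor is an equivalence sending $A_1$ to $A_2$, is the part your proposal leaves open.
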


\subsection{Mal'cev type characterization} \label{s-malcev-type-characterization}

As a special case of invariant relations, there are bijective correspondences
between subalgebras or congruences of a many-sorted algebra and subalgebras or congruences
of its homogenization.
This fact implies various many-sorted generalization of results on single-sorted algebras.
As an example, we show characterization theorems of congruence properties by
term existence conditions.

The first example is Mal'cev's characterization of congruence permutability.
To make precise, we start from a definition.
\begin{df}
Let $A$ be an $S$-sorted algebra.
\begin{enumerate}
\item
Let $\theta_s,\eta_s\subset A_s{}^2$.
A relational product $\theta\circ\eta$ of $\theta=(\theta_s)_{s\in S}$ and $\eta=(\eta_s)_{s\in S}$
is defined by sort-wise relational product $(\theta_s\circ\eta_s)_{s\in S}$. 
\item
An algebra $A$ is said congruence  permutable if $\theta\circ\eta=\eta\circ\theta$
for all $\theta,\eta\in\Con(A)$.
\item
A class of algebras $\K$ is said congruence permutable if all members of
$\K$ are congruence permutable.
\end{enumerate}
\end{df}

\begin{thm}\label{malcev}
Let $M$ be an $S$-sorted pure clone.
Then the following conditions are equivalent.
\begin{enumerate}
\item
$\V(M)$ is congruence permutable.
\item
$\V({\rm H}(M))$ is congruence permutable.
\item
There exists $p\in {\rm H}_3(M)$, called Mal'cev term, that satisfies
\[
p(\pi_{(3,0)},\pi_{(3,0)},\pi_{(3,1)})=\pi_{(3,1)}
,p(\pi_{(3,0)},\pi_{(3,1)},\pi_{(3,1)})=\pi_{(3,0)},
\]
where $3$ is a three elements set $\{0,1,2\}$.
\item
For each $s\in S$, there exists $p_s\in M_{(3\times S,p_2,s)}$ such that
\begin{align*}
p_s((\pi_{(3\times S,p_2,(0,t))})_{t\in S},(\pi_{(3\times S,p_2,(0,t))})_{t\in S},(\pi_{(3\times S,p_2,(1,t))})_{t\in S})
=\pi_{(3\times S,p_2,(1,t))},\\
p_s((\pi_{(3\times S,p_2,(0,t))})_{t\in S},(\pi_{(3\times S,p_2,(1,t))})_{t\in S},(\pi_{(3\times S,p_2,(1,t))})_{t\in S})
=\pi_{(3\times S,p_2,(0,t))}.
\end{align*}
\item
For each $s\in S$, there exists $p_s\in M_{3s\ra s}$ such that
\begin{align*}
p_s(\pi_{(3s\ra s,0)},\pi_{(3s\ra s,0)},\pi_{(3s\ra s,1)})
=\pi_{(3s\ra s,1)},\\
p_s(\pi_{(3s\ra s,0)},\pi_{(3s\ra s,1)},\pi_{(3s\ra s,1)})
=\pi_{(3s\ra s,0)}.
\end{align*}
\end{enumerate}
\end{thm}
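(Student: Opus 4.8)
The plan is to establish the cycle $(1)\Leftrightarrow(2)\Leftrightarrow(3)\Leftrightarrow(4)\Leftrightarrow(5)$, handling the two genuinely many-sorted passages $(1)\Leftrightarrow(2)$ and $(4)\Leftrightarrow(5)$ directly and reducing $(2)\Leftrightarrow(3)$ to the classical single-sorted Mal'cev theorem. For $(1)\Leftrightarrow(2)$ I would use the congruence correspondence of Proposition \ref{sub-con-corresopndence}(2): for every $M$-algebra $A$ the map $\theta\mapsto{\rm H}(\theta)=\prod_{s\in S}\theta_s$ is a bijection $\Con(A)\ra\Con({\rm H}(A))$. The one extra fact needed is that this bijection preserves relational products,
\[
{\rm H}(\theta)\circ{\rm H}(\eta)={\rm H}(\theta\circ\eta),
\]
which holds because a middle element $(b_s)_{s\in S}$ witnessing membership in the left-hand composite may be chosen sort by sort. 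Hence $A$ is congruence permutable iff ${\rm H}(A)$ is; and since the homogenization functor of Proposition \ref{many-single} realizes the categorical equivalence of Theorem \ref{main-theorem-categorical-equivalence}, it is essentially surjective, so every ${\rm H}(M)$-algebra is isomorphic to some ${\rm H}(A)$. As permutability is isomorphism-invariant, $(1)$ and $(2)$ coincide.

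The step $(2)\Leftrightarrow(3)$ is the classical Mal'cev characterization applied to the single-sorted variety $\V({\rm H}(M))$, whose term clone is ${\rm H}(M)$; thus a ternary term is exactly an element of ${\rm H}_3(M)$, and the two displayed equations of $(3)$ are the usual Mal'cev identities $p(x,x,y)=y$, $p(x,y,y)=x$. I would simply invoke this result: the standard proof (one direction a direct computation with the two identities, the other extracting $p$ from the free algebra on three generators via permutability of the congruences generated by $(x,y)$ and $(y,z)$) uses only a ternary term and so goes through verbatim for the possibly infinitary clone ${\rm H}(M)$, where $3<\kappa$.

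Next, $(3)\Leftrightarrow(4)$ is pure unravelling of Definition \ref{homog-clone}: an element of ${\rm H}_3(M)$ is precisely a tuple $(p_s)_{s\in S}$ with $p_s\in M_{(3\times S,p_2,s)}$, the projection $\pi_{(3,i)}$ of ${\rm H}(M)$ is $(\pi_{(3\times S,p_2,(i,t))})_{t\in S}$, and composition in ${\rm H}(M)$ is computed componentwise, so the equations of $(3)$ read coordinatewise become exactly the equations of $(4)$ (the right-hand sides being the $s$-components of $\pi_{(3,1)}$ and $\pi_{(3,0)}$). The substantive step is $(4)\Leftrightarrow(5)$, and here purity of $M$ (Definition \ref{pure-clone}) is the crucial ingredient. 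For $(5)\Rightarrow(4)$ I would pad: given $p_s\in M_{3s\ra s}$ set $p_s':=p_s\circ(\pi_{(3\times S,p_2,(i,s))})_{i\in 3}$, which is legitimate since each $\pi_{(3\times S,p_2,(i,s))}$ has sort $s$, and verify the identities of $(4)$ by associativity of clone composition together with the identities of $(5)$. For $(4)\Rightarrow(5)$, I would contract $p_s$ to three sort-$s$ variables: choose for each $t\in S$ a unary term $w_t\in M_{s\ra t}$ with $w_s:=\pi_{(s\ra s,0)}=\id_s$, the terms $w_t$ for $t\neq s$ existing precisely because $M$ is pure, and set
\[
q_s:=p_s\circ\big(w_t\circ\pi_{(3s\ra s,i)}\big)_{(i,t)\in 3\times S}\in M_{3s\ra s}.
\]
Substituting the projections $\pi_{(3s\ra s,0)},\pi_{(3s\ra s,0)},\pi_{(3s\ra s,1)}$ into $q_s$ and applying the first equation of $(4)$ (and symmetrically the second), associativity and the outer identity law collapse the expression; because the right-hand sides of $(4)$ are the $s$-component projections, only $w_s=\id_s$ survives while every off-diagonal $w_t$ is absorbed, yielding $q_s(x,x,y)=y$ and $q_s(x,y,y)=x$, i.e.\ the identities of $(5)$.

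I expect the main obstacle to be exactly this last contraction $(4)\Rightarrow(5)$: one must supply sort-$t$ values for the discarded coordinates $(i,t)$ with $t\neq s$, and it is purity that guarantees such values exist at all, while the special shape of the $(4)$-identities (right-hand side a single sort-$s$ projection) guarantees that those arbitrary choices are irrelevant to the outcome. The remaining equivalences are either formal (componentwise unravelling), a transport across the established categorical equivalence, or a citation of the single-sorted theorem.
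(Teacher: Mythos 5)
Your proposal is correct, and all the formal steps ($1\Leftrightarrow 2$ via the congruence correspondence of Proposition \ref{sub-con-corresopndence} together with ${\rm H}(\theta)\circ{\rm H}(\eta)={\rm H}(\theta\circ\eta)$ and essential surjectivity of ${\rm H}$; $2\Leftrightarrow 3$ by citing Mal'cev; $3\Leftrightarrow 4$ by componentwise unravelling; $5\Rightarrow 4$ by padding) match the paper's. Where you genuinely diverge is in how the cycle reaches condition $5$. The paper proves $1\Rightarrow 5$ semantically: it takes the free algebra $F$ of rank $(3\delta_{st})_{t\in S}$ in $\V(M)$ with free generators $a,b,c$ of sort $s$, uses permutability of the congruences generated by $(a,b)$ and $(b,c)$ to find $x$ with $(a,x)\in\eta$ and $(x,c)\in\theta$, and reads off $p_s\in M_{3s\ra s}$ as the term representing $x$ --- i.e.\ it reruns the Mal'cev free-algebra extraction once more, directly in the many-sorted setting. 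You instead prove $4\Rightarrow 5$ syntactically, contracting $p_s\in M_{(3\times S,p_2,s)}$ to $q_s:=p_s\circ(w_t\circ\pi_{(3s\ra s,i)})_{(i,t)\in 3\times S}$ with $w_t\in M_{s\ra t}$ supplied by purity and $w_s=\id_s$; the computation via the outer identity law and associativity (writing $w_t\circ\pi_{\sigma(i)}=\pi_{(3\times S,p_2,(\sigma(i),t))}\circ(w_u\circ\pi_j)_{(j,u)}$, applying the identities of $(4)$, and using $w_s=\id_s$) does collapse to the identities of $(5)$, so the argument is sound. Your route is purely equational and needs no many-sorted free algebras beyond the single citation of the classical theorem, and it isolates exactly where purity enters; the paper's route is shorter to state and reuses a standard idiom, at the cost of invoking free algebras in $\V(M)$ and the description of principal congruences there. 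Both are complete proofs.
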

\begin{proof}
1 $\Leftrightarrow$ 2 is an easy consequence of Proposition \ref{sub-con-corresopndence}.

2 $\Leftrightarrow$ 3 is well known Mal'cev's theorem (See e.g.{\cite[Theorem 12.2]{BS}}).

3 $\Leftrightarrow$ 4 follows from the definition of homogenization.

5 $\Rightarrow$ 4 is trivial.

1 $\Rightarrow$ 5. Let $F$ be a rank $(3\delta_{st})_{t\in S}$ free algebra,
namely, there are three elements $a,b,c$ of sort $s$ such that $\{a,b,c\}$ freely
generates $F$. Let $\theta$ and $\eta$ be congruences of $F$ generated by
$(a,b)$ and $(b,c)$ respectively. Then $(a,c)\in\theta\circ\eta=\eta\circ\theta$,
namely, there is $x\in F_s$ such that $(a,x)\in \eta$, $(x,c)\in\theta$.
Let $p_s$ be the term corresponding to $x$, then equations stated in Condition 5 hold.
\end{proof}

\begin{cor}
Let $C$ be a $<\mc\kappa$-ary single-sorted clone,
$(d,(e_s)_{s\in S})$ be a diagonal pair of $C$. Then the following conditions are equivalent.
\begin{enumerate}
\item
The variety $\V(C)$ is congruence permutable.
\item
The variety $\V(e_s(C))$ is congruence permutable for all $s\in S$.
\end{enumerate}
\end{cor}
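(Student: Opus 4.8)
The plan is to route both conditions through Theorem \ref{malcev} applied to the heterogenization of $C$, using the observation that the idempotent retracts $e_s(C)$ are precisely the single-sorted ``diagonal'' reducts of the many-sorted clone $C_d$.

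First I would set $M := C_{(d,(e_s)_{s\in S})} = C_d$, the $S$-sorted pure clone produced by heterogenization. By Proposition \ref{clone-single} we have ${\rm H}(M) = {\rm H}(C_d) \iso C$, so the clones of $\V({\rm H}(M))$ and $\V(C)$ are isomorphic; since congruence permutability of a variety depends only on its clone of term operations (congruence lattices are determined by the clone), $\V(C)$ is congruence permutable if and only if $\V({\rm H}(M))$ is, that is, if and only if Condition 2 of Theorem \ref{malcev} holds for $M$.

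The key step is to identify, for each $s\in S$, the idempotent retract $e_s(C)$ with the ``sort-$s$ part'' of $M=C_d$, namely the single-sorted clone whose $n$-ary part is $M_{(n,[i\mapsto s],s)} = M_{ns\ra s}$. Concretely, the underlying set of $e_s(C)_n$ is $\{f\in C_n\mid e_sf=f\}/\mc\sim_n$ with $f\sim_n g$ iff $f\circ(e_s\pi_{(n,i)})_{i\in n}=g\circ(e_s\pi_{(n,i)})_{i\in n}$, and this is verbatim the definition of $M_{(n,[i\mapsto s],s)}$ in Definition \ref{heterogenization-clone} specialized to the constant valuation $v=[i\mapsto s]$; moreover the projection $e_s\circ\pi_{(n,i)}/\mc\sim_n$ and the composition operation coincide on both sides. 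Thus $e_s(C)$ and the sort-$s$ reduct of $C_d$ are literally the same single-sorted clone, and in particular a Malcev term for $e_s(C)$ is the same thing as an element $p_s\in M_{3s\ra s}$ satisfying the identities in Condition 6 of Theorem \ref{malcev}.

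Finally I would combine these facts. Theorem \ref{malcev} (equivalence of Conditions 2 and 6 for the pure clone $M=C_d$) says that $\V({\rm H}(M))$ is congruence permutable if and only if for every $s\in S$ there exists $p_s\in M_{3s\ra s}$ satisfying the Malcev identities; by the identification above this is the same as saying $e_s(C)$ has a Malcev term for each $s$. By the classical single-sorted Malcev theorem, $e_s(C)$ has a Malcev term if and only if $\V(e_s(C))$ is congruence permutable. Chaining these equivalences yields Condition 1 $\Leftrightarrow$ Condition 2. The only point requiring care is the verbatim matching of the equivalence relations $\sim_n$ (from the idempotent retract) and $\sim_{n,[i\mapsto s]}$ (from heterogenization), together with the projections and composition; this is the crux of the argument, but it is a direct unwinding of the two definitions.
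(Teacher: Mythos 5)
Your proposal is correct and follows essentially the same route as the paper: both reduce the corollary to Theorem \ref{malcev} applied to $M=C_d$, matching Condition 1 of the corollary with the congruence permutability of $\V(C)\iso\V({\rm H}(C_d))$ and Condition 2 with the sortwise Mal'cev-term condition, via the (correct) identification of $e_s(C)$ with the sort-$s$ constant-valuation part $M_{ns\ra s}$ of the heterogenization. The only slip is a label: the condition you invoke about $p_s\in M_{3s\ra s}$ is Condition 5 of Theorem \ref{malcev}, not Condition 6 (which belongs to the congruence distributivity theorem); the content you cite is the right one.
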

\begin{proof}
Condition 1 of this corollary is equivalent to that $\V_d$ satisfy the condition 1 of the previous theorem.
Condition 2 of this corollary is equivalent to that $\V_d$ satisfy the condition 5 of the previous theorem.
Thus, these conditions are equivalent.
\end{proof}

Similar results hold about congruence distributivity, modularity, etc.. 
However, many results on Mal'cev condition that does not fixed the number (or the form of equation)
depend on finiteness of arity.
In such case, the corresponding generalized results are limited.
Moreover, we can construct some counter examples of infinitary or infinitely many-sorted classes
by using the correspondence of many-sorted and single-sorted algebras.

We only show a generalization of characterization of congruence distributivity.
\begin{thm}
Let $S$ be a finite set, $M$ be an $S$-sorted finitary pure clone.
Then the following conditions are equivalent:
\begin{enumerate}
\item
$\V(M)$ is congruence distributive.
\item
$\V({\rm H}(M))$ is congruence distributive.
\item
There are an integer $n\geq 0$ and $d_0,\dots,d_{2n}\in {\rm H}_3(M)$,
we call J\'onsson term of length $n$ in this paper,
that satisfy the following equations:
\begin{align*}
d_0=\pi_{(3,0)},d_{2n}=\pi_{(3,2)}\\
d_{i-1}(\pi_{(3,0)},\pi_{(3,1)},\pi_{(3,0)})=d_{i}(\pi_{(3,0)},\pi_{(3,1)},\pi_{(3,0)})
 &\ (1\leq i\leq 2n)\\
d_{2i}(\pi_{(3,0)},\pi_{(3,0)},\pi_{(3,2)})=d_{2i+1}(\pi_{(3,0)},\pi_{(3,0)},\pi_{(3,2)})
 &\ (0\leq i\leq n-1)\\
d_{2i-1}(\pi_{(3,0)},\pi_{(3,2)},\pi_{(3,2)})=d_{2i}(\pi_{(3,0)},\pi_{(3,2)},\pi_{(3,2)})
 &\ (1\leq i\leq n)
\end{align*}
\item
There exists an integer $n\geq 0$ such that, for each $s\in S$, there exist
$d_{s,i}\in M_{(3\times S,p_2,s)}$ ($0\leq i\leq 2n$) that satisfy the following equations:
\begin{align*}
&d_{s,0}=\pi_{(3\times S,p_2,(0,s))},d_{s,2n}=\pi_{(3\times S,p_2,(2,s))},&\\
&d_{s,i-1}((\pi_{(3\times S,p_2,(0,t))})_{t\in S},(\pi_{(3\times S,p_2,(1,t))})_{t\in S},(\pi_{(3\times S,p_2,(0,t))})_{t\in S})&\\
 =&d_{s,i}((\pi_{(3\times S,p_2,(0,t))})_{t\in S},(\pi_{(3\times S,p_2,(1,t))})_{t\in S},(\pi_{(3\times S,p_2,(0,t))})_{t\in S}),
 &\ (1\leq i\leq 2n)\\
&d_{s,2i}((\pi_{(3\times S,p_2,(0,t))})_{t\in S},(\pi_{(3\times S,p_2,(0,t))})_{t\in S},(\pi_{(3\times S,p_2,(2,t))})_{t\in S})&\\
 =&d_{s,2i+1}((\pi_{(3\times S,p_2,(0,t))})_{t\in S},(\pi_{(3\times S,p_2,(0,t))})_{t\in S},(\pi_{(3\times S,p_2,(2,t))})_{t\in S}),
 &\ (0\leq i\leq n-1)\\
&d_{s,2i-1}((\pi_{(3\times S,p_2,(0,t))})_{t\in S},(\pi_{(3\times S,p_2,(2,t))})_{t\in S},(\pi_{(3\times S,p_2,(2,t))})_{t\in S})&\\
 =&d_{s,2i}((\pi_{(3\times S,p_2,(0,t))})_{t\in S},(\pi_{(3\times S,p_2,(2,t))})_{t\in S},(\pi_{(3\times S,p_2,(2,t))})_{t\in S}),
 &\ (1\leq i\leq n)
\end{align*}
\item
For each $s\in S$, there exist an integer $n\geq 0$ and
$d_{s,i}\in M_{(3\times S,p_2,s)}$ ($0\leq i\leq 2n$) that satisfy
the same equations displayed in the previous condition.
\item
For each $s\in S$, there exist an integer $n\geq 0$ and
$d_{s,i}\in M_{3s\ra s}$ ($0\leq i\leq 2n$) that satisfy the following equations:
\[
\begin{array}{cl}
d_{s,0}=\pi_{(3s\ra s,0)},d_{s,2n}=\pi_{(3s\ra s,2)},&\\
d_{s,i-1}(\pi_{(3s\ra s,0)},\pi_{(3s\ra s,1)},\pi_{(3s\ra s,0)})
 =d_{s,i}(\pi_{(3s\ra s,0)},\pi_{(3s\ra s,1)},\pi_{(3s\ra s,0)}),
 &(1\leq i\leq 2n)\\
d_{s,2i}(\pi_{(3s\ra s,0)},\pi_{(3s\ra s,0)},\pi_{(3s\ra s,2)})
 =d_{s,2i+1}(\pi_{(3s\ra s,0)},\pi_{(3s\ra s,0)},\pi_{(3s\ra s,2)}),
 &(0\leq i\leq n-1)\\
d_{s,2i-1}(\pi_{(3s\ra s,0)},\pi_{(3s\ra s,2)},\pi_{(3s\ra s,2)})
 =d_{s,2i}(\pi_{(3s\ra s,0)},\pi_{(3s\ra s,2)},\pi_{(3s\ra s,2)}).
 &(1\leq i\leq n)
\end{array}
\]
\end{enumerate}
\end{thm}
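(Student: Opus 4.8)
The plan is to follow the proof of the Mal'cev characterization (Theorem \ref{malcev}) step for step, the only new feature being that a single term is replaced by a J\'onsson chain whose length is allowed to depend on the sort. First I would dispatch the three biconditionals $1\Leftrightarrow 2\Leftrightarrow 3\Leftrightarrow 4$. For $1\Leftrightarrow 2$, Proposition \ref{sub-con-corresopndence} makes $\theta\mapsto {\rm H}(\theta)$ a lattice isomorphism $\Con(A)\to\Con({\rm H}(A))$, so $\Con(A)$ is distributive iff $\Con({\rm H}(A))$ is; since every ${\rm H}(M)$-algebra arises (up to isomorphism) as some ${\rm H}(A)$ by Theorem \ref{main-theorem-categorical-equivalence}, distributivity of $\V(M)$ and of $\V({\rm H}(M))$ coincide. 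The equivalence $2\Leftrightarrow 3$ is the single-sorted J\'onsson theorem applied to $\V({\rm H}(M))$ (see e.g.\ \cite{BS}); it applies because ${\rm H}(M)$ is finitary and the $d_i$ are finitary ternary terms. Finally $3\Leftrightarrow 4$ is obtained by reading off Definition \ref{homog-clone}: an element $d_i\in {\rm H}_3(M)$ is a tuple $(d_{i,s})_{s\in S}$ with $d_{i,s}\in M_{(3\times S,p_2,s)}$, and the J\'onsson identities hold in ${\rm H}(M)$ exactly when they hold in each component, which is precisely the system of Condition 4 with $d_{s,i}:=d_{i,s}$ and a common length $n$.

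It remains to fold in Conditions 5 and 6. The implication $6\Rightarrow 5$ is the inflation already used for $5\Rightarrow 4$ in Theorem \ref{malcev}: given $d_{s,i}\in M_{3s\ra s}$, set $\tilde d_{s,i}:=d_{s,i}\circ(\pi_{(3\times S,p_2,(0,s))},\pi_{(3\times S,p_2,(1,s))},\pi_{(3\times S,p_2,(2,s))})\in M_{(3\times S,p_2,s)}$; substituting the projection tuples appearing in Condition 5 picks out exactly the three sort-$s$ coordinates, so the reduced identities of Condition 6 turn into the homogenized identities of Condition 5, with the same $n$. For $1\Rightarrow 6$ I would argue as in $1\Rightarrow 5$ of Theorem \ref{malcev}: fix $s$ and let $F$ be the free $M$-algebra of rank $(3\delta_{st})_{t\in S}$, generated freely by three elements $a,b,c$ of sort $s$, so that $F_s$ is exactly the set of values at $(a,b,c)$ of the $3s\ra s$ terms. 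Since $\V(M)$ is congruence distributive, $\Con(F)$ is distributive, and the classical J\'onsson construction --- starting from $(a,c)\in \mathrm{Cg}(a,c)\cap(\mathrm{Cg}(a,b)\vee\mathrm{Cg}(b,c))$ and distributing the meet over the join --- yields a finite alternating chain $a=w_0,\dots,w_{2n_s}=c$ in $F_s$. Reading each $w_j$ as a $3s\ra s$ term and translating the generating congruence relations into identities (by passing to the appropriate quotients, exactly as in the Mal'cev argument) produces $d_{s,i}\in M_{3s\ra s}$ satisfying Condition 6, with a length $n_s$ that may genuinely vary with $s$.

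The remaining step $5\Rightarrow 4$, together with the trivial $4\Rightarrow 5$, is where the hypothesis that $S$ is finite is used, and I expect it to be the main obstacle. The direction $4\Rightarrow 5$ merely reuses the common $n$ as each per-sort length. For $5\Rightarrow 4$ I would pad the chains: put $n:=\max_{s\in S} n_s$, which is a finite integer precisely because $S$ is finite, and extend every short family by $d_{s,j}:=\pi_{(3\times S,p_2,(2,s))}$ for $2n_s<j\le 2n$. The point to verify is that this padding preserves all four families of J\'onsson identities. This is a short computation: on the tuple carrying $\pi_{(3\times S,p_2,(0,t))}$ in its third block the padded term returns $\pi_{(3\times S,p_2,(0,s))}$, and on the tuples carrying $\pi_{(3\times S,p_2,(2,t))}$ there it returns $\pi_{(3\times S,p_2,(2,s))}$; one then checks that the boundary identity at $j=2n_s$ propagates unchanged through the padded indices and that $d_{s,2n}=\pi_{(3\times S,p_2,(2,s))}$ as required. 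This closes the cycle $1\Rightarrow 6\Rightarrow 5\Rightarrow 4\Rightarrow 3\Rightarrow 2\Rightarrow 1$. I would also record that finiteness of $S$ is essential here: for infinite $S$ the lengths $n_s$ can be unbounded, so Condition 4 may fail while Conditions 5 and 6 still hold --- the phenomenon foreshadowed in the remark preceding the theorem.
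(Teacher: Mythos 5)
Your proposal is correct and follows essentially the same route as the paper: equivalence of (1)--(4) as in the Mal'cev theorem (with (2)$\Leftrightarrow$(3) being J\'onsson's single-sorted theorem), (1)$\Rightarrow$(6) via the free algebra of rank $(3\delta_{st})_{t\in S}$ and the finitary description of joins of congruences, (6)$\Rightarrow$(5) by inflating sorts, and (5)$\Rightarrow$(4) by padding the chains using finiteness of $S$. You merely spell out the padding and inflation steps that the paper dismisses as ``trivial'' or ``easily follows,'' and your checks of those steps are accurate.
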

\begin{proof}
Equivalence of 1,2,3 and 4 is similar to corresponding equivalence of Theorem \ref{malcev}. 2 $\Leftrightarrow$ 3 is known as J\'onsson's theorem (See e.g. {\cite[Theorem 12.6]{BS}}).
5 $\Rightarrow$ 4 easily follows from finiteness of $S$.
6 $\Rightarrow$ 5 is trivial.

1 $\Rightarrow$ 6. Let $F$ be a rank $(3\delta_{st})_{t\in S}$ free algebra of $\V(M)$,
$\{a,b,c\}$ be a set of free generators of $F$, and
$\theta,\eta,\zeta$ be the congruences of $F$ generated by $(a,c),(a,b),(b,c)$ respectively.
Then
\[
(a,c)\in \theta\land(\eta\lor\zeta)=(\theta\land\eta)\lor(\theta\land\zeta).
\]
By finiteness of arity of $M$, the join in congruence lattice is described as
$\alpha\lor\beta=\bigcup_{n\in\N}(\alpha\circ\beta)^n$, where $(\alpha\circ\beta)^n$
denotes the $k$-th relational power of $\alpha\circ\beta$.
Thus, there are $x_0,\dots,x_{2n}$ such that
\[
x_0=a,x_{2n}=c,(x_{2i},x_{2i+1})\in\theta\land\eta, (x_{2i-1},x_{2i})\in\theta\land\zeta.
\]
The terms $d_{s,i}$ corresponding to $x_i$ satisfy the equations stated in Condition 6.
\end{proof}

Next example is a counter example of infinitary variety for J\'onsson's Theorem.
\begin{eg}
Let $S=\N=\aleph_0$ and $C_n$ be the $<\mc\aleph_1$-ary clone defined by the following presentation:
\begin{itemize}
\item
The set of generators consists of 3-ary elements $d_0,\dots,d_{2n}$ and a nullary element $u$.
\item
The set of fundamental relations consists of J\'onsson equations, that is,
equations displayed in Condition 3 of the previous theorem.
\end{itemize}
Let $C$ be the direct product clone $\prod_{n\in\N}C_n$ and define
\[
d=(\pi_{(\aleph_0,i)})_{i\in\aleph_0}\in C_{\aleph_0},
e_n=(\pi_{(\{0\},0)},(u)_{i\in\aleph_0\sm\{n\}})\in C_1
\]
for $n\in\aleph_0$.
(Intuitively, $d$ and $e_n$ are the operations such that
$d:((a_{ij})_{j\in \N})_{i\in \N}\mapsto (a_{jj})_{j\in \N}$ and 
\mbox{$e_n:(a_i)_{i\in\N}\mapsto (a_n,(u)_{i\in\N\sm\{n\}})$}.) 
Then $(d,(e_n)_{n\in\N})$ is a diagonal pair of $C$.
Moreover, the following assertions hold.
\begin{enumerate}
\item
$\V(C)$ and $\V(C_d)$ are congruence distributive.
\item
$C$ does not have J\'onsson term.
\item
$C_d$ has terms that satisfy the equations stated in 6 of the previous theorem.
\item
$C_d$ is essentially finitary.
\end{enumerate}
\end{eg}
\begin{proof}
1. Each $C_n$ has J\'onsson term, thus $\V(C_n)$ is congruence distributive.
Thus,
\[
\V(C)=\V\left(\prod_{n\in\N}C_n\right)\iso \bigotimes_{n\in\N}\V(C_n)
\]
is congruence distributive. Here, $\otimes_{n\in\N}\V(C_n)$ denotes
non-indexed product of the family $\{\V(C_n)\}_{n\in \N}$ of varieties.
By Proposition \ref{sub-con-corresopndence}, $\V(C_d)$ also be congruence distributive.

2. If $C$ has J\'onsson term of length $n$, then $C_{n+1}$ also has J\'onsson term of length $n$.
It is verified by the induction on $n$ that this is impossible.

3. It is easily verified that $e_n(C)$ is isomorphic to $C_n$.
Particularly, $e_n(C)$ has J\'onsson term.
These are nothing but the terms of $C_d$ satisfying Condition 6 of the theorem.

4. Let $f\in C_{d,(\aleph_0,v,n)}$, where $v:\aleph_0\ra \N$ and $n\in\N$.
By the definition of heterogenization, $f$ is represented by $\tilde{f}\in C_{\aleph_0}$
such that $e_n\circ\tilde{f}=\tilde{f}$. This condition implies that
the $n$-th component of $\tilde{f}$ is an element of $C_n$ (it is finitary),
and other components are $u$ (depend no variables).
Therefore, $\tilde{f}$, particularly $f$, is finitary.
\end{proof}


\begin{thebibliography}{a9}
\bibitem{ARV}
Ad\'amek, J., Rosick\'y, J., Vitale, E.M.: Birkhoff's variety theorem in many sorts, Algebra Universalis \textbf{68} (2012), 39-42.
\bibitem{BS}
Burris, S.B., Sankappanavar, H.P.: A Course in Universal Algebra 
The Millennium Edition, Springer-Verlag.
\bibitem{DL}
Denecke, K., L\"{u}ders, O.: Categorical equivalence of varieties and invariant relations, Algebra Universalis \textbf{46} (2001), 105-118.
\bibitem{Iza}
Izawa, S.: Composition of matrix products and categorical equivalence, Algebra Universalis \textbf{69} (2013), 327-356.
\bibitem{Izainf}
Izawa, S.: General theorems on category of infinitary algebras, in preparation.
\bibitem{Kea}
Kearnes, K.A.: Tame Congruence Theory is a localization theory. Lecture
Notes from ``A Course in Tame Congruence Theory" Workshop,
Budapest, 2001.
\bibitem{MRS}
Mu\'{c}ka, A., Romanowska, A.B., Smith, J.D.H.: Many-sorted and single-sorted algebras, Algebra Universalis \textbf{69} (2013), 171-190.
\end{thebibliography}
\end{document}